\documentclass[10pt,reqno]{amsart}
\usepackage[margin=0.8in]{geometry}
\usepackage{amsthm, amsmath,amsfonts,amssymb,euscript,hyperref,graphics,color,slashed}
\usepackage{graphicx}
\usepackage{mathrsfs}
\usepackage{comment}
\usepackage{latexsym}
\usepackage[makeroom]{cancel}

\def\inte#1{
\displaystyle\mathop{#1\kern0pt}^\circ }



\let\e=\varepsilon
\let\z=\zeta

\let\f=\frac

\let\p=\psi



\def\virgp{\raise 2pt\hbox{,}}
\def\cdotpv{\raise 2pt\hbox{;}}

\def\eqdefa{\buildrel\hbox{\footnotesize def}\over =}

\def\C{\mathop{\mathbb C\kern 0pt}\nolimits}
\def\DD{\mathop{\mathbb D\kern 0pt}\nolimits}
\def\EE{\mathop{{\mathbb E \kern 0pt}}\nolimits}
\def\K{\mathop{\mathbb K\kern 0pt}\nolimits}
\def\N{\mathop{\mathbb N\kern 0pt}\nolimits}
\def\Q{\mathop{\mathbb Q\kern 0pt}\nolimits}
\def\R{\mathop{\mathbb R\kern 0pt}\nolimits}
\def\SS{\mathop{\mathbb S\kern 0pt}\nolimits}
\def\ZZ{\mathop{\mathbb Z\kern 0pt}\nolimits}
\def\TT{\mathop{\mathbb T\kern 0pt}\nolimits}
\def\P{\mathop{\mathbb P\kern 0pt}\nolimits}

\newcommand{\Z}{{\ZZ}}



\def\na{\nabla}
\def\p{\partial}

\newcommand{\beq}{\begin{equation}}
\newcommand{\eeq}{\end{equation}}
\newcommand{\ben}{\begin{eqnarray}}
\newcommand{\een}{\end{eqnarray}}
\newcommand{\beno}{\begin{eqnarray*}}
\newcommand{\eeno}{\end{eqnarray*}}


\newtheorem{thm}{Theorem}[section]

\newcommand{\vv}[1]{\boldsymbol{#1}}

\def\v{v}

\newtheorem*{Main Theorem}{Main Theorem}
\newtheorem{theorem}{Theorem}[section]
\newtheorem{lemma}[theorem]{Lemma}
\newtheorem{proposition}[theorem]{Proposition}

\newtheorem{remark}[theorem]{Remark}

\setlength{\textwidth}{16cm} \setlength{\oddsidemargin}{0cm}
\setlength{\evensidemargin}{0cm}

\numberwithin{equation}{section}

\begin{document}
\title[Well-posedness]{Long time existence for a strongly dispersive Boussinesq system}

\author{Jean-Claude Saut}
\address{Laboratoire de Math\' ematiques, UMR 8628\\
Universit\' e Paris-Saclay, Paris-Sud et CNRS\\ 91405 Orsay, France}
\email{jean-claude.saut@u-psud.fr}

\author[Li XU]{Li Xu}
\address{School of Mathematics and Systems Science, Beihang University\\  100191 Beijing, China}
\email{xuliice@buaa.edu.cn}

\date{September17, 2019}
\maketitle

\vspace{1cm}
 \textit{Abstract}.

 This paper is concerned with the one-dimensional version of a specific member of the (abcd) family of Boussinesq systems having the higher possible dispersion. We will establish two different  long time existence results for the solutions of the Cauchy problem. The first result concerns the system \eqref{Bsq} without a small parameter. If  the initial data  is of order $O(\e)$, we  prove that the existence time scale is of $1/\e^{\f43}$ which improves the  result  $1/\e$ that could be obtained by a "dispersive" method.   The second result is about the system \eqref{Bsqeps} which involves a  small parameter $\epsilon$ in front of the dispersive and nonlinear terms and which is the form obtained when the system is derived from the water wave system in the KdV/Boussinesq regime. If the initial data is of order $O(1)$, we obtain the existence time scale $1/{\epsilon^{\f23}}$ which improves the result $1/\sqrt{\epsilon}$  obtained by a dispersive method.  These results were not included in the previous papers dealing with similar issues because of the presence of zeroes in the phases. The proof involves normal form transformations suitably modified away from the zero set of the phases.\\

Keywords : Boussinesq systems. Long time existence. Normal forms.

\tableofcontents

\setcounter{equation}{0}
\section{Introduction}
\subsection{The general setting}

The four-parameter (abcd) Boussinesq systems for {\it long wavelength, small amplitude} gravity-capillary  surface water waves introduced in \cite{BCL, BCS1} couples  the elevation of the wave $\zeta=\zeta(x,t)$ to a measure of the horizontal velocity $\vv v=\vv v(x,t), x\in \R^N, N=1,2, t\in \R$ and read as follows:
\beq\label{Bsq 1}\left\{\begin{aligned}
&\p_t\z+\na\cdot\vv v+\epsilon\na\cdot(\z\vv v)+\epsilon\bigl(a\na\cdot\Delta\vv v-b\Delta\p_t\z\bigr)=0,\\
&\p_t\vv v+\na\z+\f{\epsilon}{2}\na(|\vv v|^2)+\epsilon\bigl(c\na\Delta\z-d\Delta\p_t\vv v\bigr)=\vv 0.
\end{aligned}\right.\eeq

Here $a, b, c, d$ are modeling parameters which satisfy the
constraint $a+b+c+d=\frac{1}{3}-\tau $  where $\tau\geq 0$ is a measure of surface tension effects, $\tau=0$ for pure gravity waves.


In \eqref{Bsq 1},  the small parameter $\epsilon$ is defined by
$$\epsilon=a/h\sim (h/\lambda)^2,$$
 where $h$ denotes the mean depth of the fluid, $a$ a typical amplitude of the wave and $\lambda$ a typical horizontal wavelength.

It was established in \cite{BCL} that, in suitable Sobolev classes, the error with solutions of the full water waves system and the approximation given by \eqref{Bsq 1} is of order $O(\epsilon^2 t).$ This result is of course useful if one knows that the corresponding solutions of the water wave system in this regime and of the Boussinesq systems exist on time scales of at least $O(1/\epsilon).$ This has been proven in \cite{AL}, see also \cite{La1}, for the water wave systems and in \cite{ Bu, Bu2, MSZ, SX, SWX} for all the locally-well posed Boussinesq systems except the case $b=d=0, a=c>0$ which is in some sense special since the "generic" case  $b=d=0, a, c>0, a\neq c$ is linearly ill-posed.

 \begin{remark}
 The global well-posedness of Boussinesq systems has been only established in a few one-dimensional cases, including the case $a=c=b=0, d>0$ that can be viewed as a dispersive perturbation of the hyperbolic Saint-Venant (shallow water) system, see \cite{A, Sc}, and the  Hamiltonian cases $b=d>0, a\leq 0, c<0$, see \cite{BCS2}. We also refer to \cite{KMPP, KM} for scattering results in the energy space for those Hamiltonian cases when $b=d>0.$
\end{remark}

\vspace{0.3cm}
Recall that the linearization of \eqref{Bsq 1} around the null solution is well-posed (see \cite{BCS1}) provided that
\beq\label{Bsq 2}
a\leq0,\quad c\leq0,\quad b\geq0,\quad d\geq0,
\eeq
\beq\label{Bsq 3}
\text{or}\quad a=c>0,\quad b\geq0,\quad d\geq0.
\eeq

Actually the linear well-posedness occurs when the non zero eigenvalues of the linearization of \eqref{Bsq 1} at $(0, 0)$

$$ \lambda_{\pm}(\xi)=\pm i |\xi|\left(\frac{(1-\epsilon a|\xi|^2)(1-\epsilon c|\xi|^2)}{(1+\epsilon d|\xi|^2)(1+\epsilon b|\xi|^2)}\right)^{\frac{1}{2}}.$$
are purely imaginary.

\vspace{0.3cm}
This paper will focus on  the exceptional case \eqref{Bsq 3} with $b=d=0,\, a=c=1$ which is the only linearly well-posed case with eigenvalues having non trivial zeroes. Moreover we will restrict to the one-dimensional case, $N=1.$

If $(\zeta,v)$ is a solution of \eqref{Bsq 1}, then by the scaling
\beno
\tilde\z(t,x)=\epsilon\z(\epsilon^{\f12}t,\epsilon^{\f12}x),\quad\tilde{\vv v}(t,x)=\epsilon\vv v(\epsilon^{\f12}t,\epsilon^{\f12}x),
\eeno
 $(\tilde\z,\tilde{\vv v})$ satisfies \eqref{Bsq 1} with $\epsilon=1$ (see also \cite{BCS1}).

\vspace{0.3cm}
In this article, we first establish the long time existence theory for the following strongly dispersive (1D) Boussinesq system
\beq\label{Bsq}\left\{\begin{aligned}
&\p_t\z+(1+\p_x^2)\p_xv+\p_x(\z v)=0,\\
&\p_tv+(1+\p_x^2)\p_x\z+\f12\p_x(v^2)=0,
\end{aligned}\right.\eeq
with initial data
\beq\label{initial}
\z|_{t=0}=\z_0,\quad v|_{t=0}=v_0
\eeq
which are of order $O(\e)$ in a suitable Sobolev class on time scales of order $O(1/{\e^{\f43}})$. {\color{red} } A similar issue was discussed in \cite{IP} for multi-dimensional periodic water waves.

\vspace{0.3cm}
As a consequence, we will prove the  long time existence of solutions to  \eqref{Bsq 1} with $b=d=0, a=c=1$ in the one-dimensional case, that is

\beq\label{Bsqeps}\left\{\begin{aligned}
&\p_t\z+(1+\epsilon\p_x^2)\p_xv+\epsilon\p_x(\z v)=0,\\
&\p_tv+(1+\epsilon\p_x^2)\p_x\z+\f\epsilon 2\p_x(v^2)=0,
\end{aligned}\right.\eeq
with initial data
\beq\label{initialeps}
\z|_{t=0}=\z_0,\quad v|_{t=0}=v_0
\eeq
which are of order $O(1),$ on time scales of order $O(1/\epsilon^{2/3}).$

Contrary to \cite{SX,SWX} where only symmetrization techniques were used to establish the  well-posedness of Boussinesq systems on time scales of order $O(1/\epsilon),$ we will use normal form transformations suitably modified to avoid the zero set of the phases. Normal form techniques  were used  to obtain global or long time existence results of small solutions to the full water wave system, see {\it e.g.,} \cite{AD, IP, Wang}.

\vspace {0.3cm}
We recall that the local well-posedness of \eqref{Bsq} and \eqref{Bsqeps} can be established by reducing to known results for the KdV equation.

Actually, as noticed in \cite{BCS2}, the change of variable $\zeta=u+w, \; v=u-w$ reduces \eqref{Bsqeps} to the following system:
\begin{equation} \label{1Ddiag}
\left\{ \begin{array}{l}    u_t+u_x+\epsilon u_{xxx}
+\frac{\epsilon}{2}\lbrack 3uu_x-ww_x-(uw)_x\rbrack=0 \\
w_t -w_x-\epsilon w_{xxx}+\frac{\epsilon}{2}\lbrack uu_x-3ww_x+(uw)_x\rbrack =0
 \end{array} \right., \quad x \in \mathbb R, \ t \in
\mathbb R,
\end{equation}
which is  a system  of KdV type with uncoupled (diagonal) linear
part.  Thus (see \cite{BCS2}) the Cauchy problem is easily seen to
be locally well-posed for initial data in $H ^s(\R)\times H ^s(\R),$
$s>\frac{3}{4}$ by the results in \cite{KPV1}, \cite{KPV2}.

On the other  hand, as noticed in \cite{ST} Appendix A in a slightly
different context, a minor modification of Bourgain's method as used
in \cite{KPV3} allows to solve the Cauchy problem for \eqref{1Ddiag}
for data in $H^s(\R)\times H^s(\R)$ with $s>-\frac{3}{4} $. We refer
to \cite{BGK} for details. It is worth noticing that in \cite{BGK}
the question of the dependence of the existence time with respect to
$\epsilon$ is not considered but one can check that it is of order $O(1/\sqrt\epsilon).$

By using dispersive properties it has been moreover established in \cite{LPS} that the two-dimensional version of \eqref{Bsqeps} is well-posed in $H^s(\R^2)\times H^s(\R^2)\times H^s(\R^2), s>\frac{3}{2}$ on time scales of order $O(1/\sqrt\epsilon).$ Note that neglecting the dispersive terms in \eqref{Bsqeps} one gets by a standard symmetrization method the existence on time scales of order $O(1/\epsilon)$ but in the "hyperbolic" space $H^s(\R^2), s>2.$

\vspace {0.3cm}
We also recall (see \cite{BCS2}) that \eqref {Bsqeps} and \eqref{Bsq} have an Hamiltonian structure given (for \eqref {Bsqeps} ) by

$$\partial_t \begin{pmatrix}\zeta\\v\end{pmatrix}=J\text{grad}\;H\begin{pmatrix}\zeta\\v\end{pmatrix}$$
where

$$J=\begin{pmatrix}0&\partial_x\\ \partial_x&0\end{pmatrix}$$
and  $$H(\zeta,v)=\frac{1}{2}\int_{-\infty}^\infty (\epsilon\zeta_x^2+\epsilon v_x^2-\zeta^2-v^2-\epsilon v^2\zeta)dx.$$

Unfortunately, contrary to the case $b=d>0, a\leq 0, c<0$ mentioned above, it does not seem possible to use uniquely this structure to prove the global existence of small solutions.

\vspace{0.3cm}
The paper will be organized as follows. The Introduction will continue by some heuristics and the statements of the main results. Section 2 is devoted to some preliminary results. A symmetrization of the strongly dispersive system is given in Section 3 while Sections 4 and 5 are devoted to the proof of the main results,  Theorem \ref{main theorem} and Theorem \ref{long time existence thm} respectively.

\subsection{Heuristics analysis of the system \eqref{Bsq}}\label{Heuristics subsec}
In order to diagonalize  the linear part of \eqref{Bsq}, we define
\beno
V=\z+i\f{\p_x}{|\p_x|}v\quad\text{and}\quad \Lambda=(1+\p_x^2)|\p_x|.
\eeno

Then \eqref{Bsq} is rewritten as

\beq\label{Bsq 4}
\p_tV-i\Lambda V=\sum_{\mu,\nu\in\{+,-\}}Q_{\mu,\nu}(V^\mu,V^\nu),
\eeq
where $V^+=V,\, V^-=\overline{V}$ and $Q_{\mu,\nu}(V^\mu,V^\nu)$ are quadratic terms in $V^\mu$ and $V^\nu$ with symbol $q_{\mu,\nu}(\cdot,\cdot)$, i.e.,
\beq\label{Bsq 5}
\mathcal{F}\Bigl(Q_{\mu,\nu}(V^\mu,V^\nu)\Bigr)(\xi)=\f{1}{2\pi}\int_{\R}q_{\mu,\nu}(\xi,\eta)\widehat{V^\mu}(\xi-\eta)\widehat{V^\nu}(\eta)d\eta.
\eeq
One could check that $|q_{\mu,\nu}(\xi,\eta)|\sim |\xi|$. Since we aim  to prove  long time existence results for solutions  of \eqref{Bsq}, we hope that the quadratic terms could be killed. To do so, we  use  normal form transformation techniques.

Defining the profile of $V$ as follows
\beno
f(t,x)=e^{-it\Lambda} V(t,x),\quad \text{i.e.,}\quad\widehat{f}(t,\xi)=e^{-it\Lambda(\xi)}\widehat{V}(t,\xi),
\eeno
we have
\beq\label{Bsq 6}
\p_t\widehat{f}=\sum_{\mu,\nu\in\{+,-\}}\f{1}{2\pi}\int_{\R}e^{it\Phi_{\mu,\nu}(\xi,\eta)}q_{\mu,\nu}(\xi,\eta)\widehat{f^\mu}(\xi-\eta)\widehat{f^\nu}(\eta)d\eta,
\eeq
where the phase $\Phi_{\mu,\nu}(\xi,\eta)$ is defined by
\beno
\Phi_{\mu,\nu}(\xi,\eta)=-\Lambda(\xi)+\mu\Lambda(\xi-\eta)+\nu\Lambda(\eta).
\eeno
To remove the quadratic terms in the right hand side of \eqref{Bsq 6}, we introduce the following normal forms transformation
\beq\label{Bsq 8}
g=f+\sum_{\mu,\nu\in\{+,-\}}A_{\mu,\nu}(f^\mu,f^\nu),
\eeq
where
\beno
\mathcal{F}\Bigl(A_{\mu,\nu}(f^\mu,f^\nu)\Bigr)(\xi)
=\f{1}{2\pi}\int_{\R}e^{it\Phi_{\mu,\nu}(\xi,\eta)}a_{\mu,\nu}(\xi,\eta)\widehat{f^\mu}(\xi-\eta)\widehat{f^\nu}(\eta)d\eta
\eeno
with the symbol
\beq\label{Bsq 9}
a_{\mu,\nu}(\xi,\eta)=-\f{q_{\mu,\nu}(\xi,\eta)}{i\Phi_{\mu,\nu}(\xi,\eta)}.
\eeq

Thus, we have
\beq\label{Bsq 10}
\p_t\widehat{g}=\f{1}{2\pi}\sum_{\mu,\nu\in\{+,-\}}\int_{\R}e^{it\Phi_{\mu,\nu}(\xi,\eta)}
a_{\mu,\nu}(\xi,\eta)\p_t\Bigl(\widehat{f^\mu}(\xi-\eta)\widehat{f^\nu}(\eta)\Bigr)d\eta.
\eeq

By  virtue of \eqref{Bsq 6}, we see that the r.h.s. of \eqref{Bsq 10} includes the cubic terms in $(f^\mu,f^\nu,f^{\gamma})$. Therefore, if the symbols of quadratic terms have "good" properties, for data of small size  $\e$, the time scale $\f{1}{\e^2}$ is much likely expected.

\medskip

We will use the normal form techniques in another way, that is, integrating by parts with respect to time in the energy estimate. More precisely, energy estimate  gives rise to
\beno
\f12\f{d}{dt}\|V\|_{H^N}^2=\sum_{\mu,\nu\in\{+,-\}}\bigl(Q_{\mu,\nu}(V^\mu,V^\nu)\,|\,V^+\bigr)_{H^N},
\eeno
which implies that
\beq\label{Bsq 19}
\|V(t)\|_{H^N}^2\lesssim\|V(0)\|_{H^N}^2
+\sum_{\mu,\nu\in\{+,-\}}\underbrace{\int_0^t\bigl(Q_{\mu,\nu}(V^\mu,V^\nu)\,|\,V^+\bigr)_{H^N}d\tau}_{I_{\mu,\nu}}.
\eeq

For $I_{\mu,\nu}$, using \eqref{Bsq 5}  and the profiles, we have
\beno\begin{aligned}
&I_{\mu,\nu}=\f{1}{(2\pi)^2}\int_0^t\int_{\R\times\R}\langle\xi\rangle^{2N}q_{\mu,\nu}(\xi,\eta)\widehat{V^\mu}(\xi-\eta)\widehat{V^\nu}(\eta)
\overline{\widehat{V^+}(\xi)}d\eta d\xi d\tau \\
&=\f{1}{(2\pi)^2}\int_0^t\int_{\R\times\R}e^{i\tau\Phi_{\mu,\nu}(\xi,\eta)}\langle\xi\rangle^{2N}q_{\mu,\nu}(\xi,\eta)
\widehat{f^\mu}(\xi-\eta)\widehat{f^\nu}(\eta)
\overline{\widehat{f^+}(\xi)}d\eta d\xi d\tau.
\end{aligned}\eeno
Since
\beno
e^{i\tau\Phi_{\mu,\nu}(\xi,\eta)}=\f{1}{i\Phi_{\mu,\nu}(\xi,\eta)}\f{d}{d\tau}e^{i\tau\Phi_{\mu,\nu}(\xi,\eta)},
\eeno
integrating by parts with respect to $\tau$, we have
\beq\label{Bsq 21}\begin{aligned}
I_{\mu,\nu}=&\f{1}{(2\pi)^2}\int_{\R\times\R}e^{i\tau\Phi_{\mu,\nu}(\xi,\eta)}
\f{\langle\xi\rangle^{2N}q_{\mu,\nu}(\xi,\eta)}{i\Phi_{\mu,\nu}(\xi,\eta)}\cdot
\widehat{f^\mu}(\xi-\eta)\widehat{f^\nu}(\eta)
\overline{\widehat{f^+}(\xi)}d\eta d\xi\bigl|_{\tau=0}^t\\
&-\f{1}{(2\pi)^2}\int_0^t\int_{\R\times\R}e^{i\tau\Phi_{\mu,\nu}(\xi,\eta)}
\f{\langle\xi\rangle^{2N}q_{\mu,\nu}(\xi,\eta)}{i\Phi_{\mu,\nu}(\xi,\eta)}\cdot
\p_\tau\bigl(\widehat{f^\mu}(\xi-\eta)\widehat{f^\nu}(\eta)
\overline{\widehat{f^+}(\xi)}\bigr)d\eta d\xi d\tau.
\end{aligned}\eeq

By virtue of \eqref{Bsq 6}, we see that the second term in the r.h.s of \eqref{Bsq 21} includes inner product between the cubic terms of $(f^\mu,f^\nu,f^{\gamma})$ and $f^+$. The first term in the r.h.s of \eqref{Bsq 21} may be controlled by the initial energy. If the symbols of quadratic terms have  "good" properties, one may derive an energy estimate from \eqref{Bsq 19} so that the time scale $\f{1}{\e^2}$ is much likely expected, provided that  the data is of  small size $\e$.

\medskip

 However, the phase $\Phi_{\mu,\nu}(\xi,\eta)$ may equal $0$ for some $\xi$ and $\eta$. The symbol $a_{\mu,\nu}(\xi-\eta,\eta)$ in \eqref{Bsq 9} is not well-defined for all $(\xi,\eta)\in\R^2$. While the integration by parts with respect to  $\tau$ in \eqref{Bsq 21} could not work for all $(\xi,\eta)\in\R^2$. We have to modify the normal forms transformation only on the "good frequencies set" that is far away from the zeroes of the phase $\Phi_{\mu,\nu}(\xi,\eta)$ . Then the existence time scale may be enlarged. Although we could not obtain the time scale $\f{1}{\e^2}$, we may get the existence time scale $\f{1}{\e^{1+\delta}}$  (for some $\delta\in(0,1)$).  It extends the local existence time scale $\f{1}{\e}$ that can be obtained by  a purely dispersive method as in \cite{LPS}.

\medskip

In the present paper, we thus  use  normal form techniques after integration by parts with respect to time as in \eqref{Bsq 21}.

\subsection{The main results}

We now state the main results of this paper. The first one concerns  the system \eqref{Bsq} without the  small parameter $\epsilon$ but with "small" initial data.
\begin{thm}\label{main theorem}
Assume that  $(\z_0,v_0)\in H^{N_0}(\R)$ for some $N_0\geq 4$ satisfying $\widehat{\z_0}(0)=\widehat{v_0}(0)=0$ and
\beq\label{initial assumption}
\|\z_0\|_{H^{N_0}}^2+\|v_0\|_{H^{N_0}}^2=\e^2.
\eeq
There exists a small $\e_0>0$ such that for all $\e\in(0,\e_0]$, there exists $T_\e=c_0\e^{-\f{4}{3}}$ for some $c_0>0$ and a unique solution $(\z,v)\in C(0,T_\e;H^{N_0}(\R))$ of system \eqref{Bsq}-\eqref{initial} such that
\beq\label{energy estimate}
\sup_{(0,T_\e)}\bigl(\|\z(t)\|_{H^{N_0}}+\|v(t)\|_{H^{N_0}}\bigr)\leq C\bigl(\|\z_0\|_{H^{N_0}}+\|v_0\|_{H^{N_0}}\bigr),
\eeq
where $C>0$ is a universal constant.
\end{thm}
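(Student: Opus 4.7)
The plan is to implement the modified normal form strategy outlined in Subsection \ref{Heuristics subsec}, reconciling the cost of dividing by the phase $\Phi_{\mu,\nu}$ with the smallness of the region where that phase vanishes. First, I would symmetrise the system: introducing $V=\zeta+i\frac{\partial_x}{|\partial_x|}v$ and $\Lambda=(1+\partial_x^2)|\partial_x|$, \eqref{Bsq} takes the diagonal form \eqref{Bsq 4}, and a standard $H^{N_0}$ energy computation yields \eqref{Bsq 19} with nonlinear contributions $I_{\mu,\nu}$ given on the Fourier side by oscillatory integrals with phase $\Phi_{\mu,\nu}(\xi,\eta)=-\Lambda(\xi)+\mu\Lambda(\xi-\eta)+\nu\Lambda(\eta)$. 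A preliminary geometric step would then be to describe the zero set $Z_{\mu,\nu}=\{\Phi_{\mu,\nu}=0\}$: since $\Lambda$ is explicit and odd, $Z_{\mu,\nu}$ splits into a controlled collection of smooth curves together with a few exceptional points (such as $\xi=0$, $\eta=0$, $\xi=\eta$), and away from these one should establish a transversality estimate $|\nabla_{\eta}\Phi_{\mu,\nu}(\xi,\eta)|\gtrsim 1$.

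For a parameter $\delta\in(0,1)$ to be chosen at the end, introduce a smooth partition $1=\chi^{g}_\delta(\xi,\eta)+\chi^{b}_\delta(\xi,\eta)$ with $\mathrm{supp}\,\chi^{b}_\delta\subset\{|\Phi_{\mu,\nu}|\leq 2\delta\}$ and $\chi^{g}_\delta$ supported on the complementary non-resonant set where $|\Phi_{\mu,\nu}|\gtrsim\delta$, and split $I_{\mu,\nu}=I^{g}_{\mu,\nu}+I^{b}_{\mu,\nu}$. On the good piece I would perform the time integration by parts as in \eqref{Bsq 21}: the boundary term is a trilinear form whose multiplier is bounded by $C\delta^{-1}\langle\xi\rangle^{2N_0}|\xi|$, giving a contribution of size $C\epsilon^{3}/\delta$. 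The bulk term, after substituting \eqref{Bsq 6} for $\partial_\tau\widehat{f}$, becomes a quartic space-time integral controlled by $C\int_0^{T}\|V(\tau)\|_{H^{N_0}}^{4}/\delta\,d\tau\lesssim T\epsilon^{4}/\delta$ via Sobolev product laws (this is where the margin $N_0\geq 4$ is comfortable).

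On the bad piece, integration by parts is forbidden, and I would instead exploit transversality: the $\eta$-slice $\{\eta:|\Phi_{\mu,\nu}(\xi,\eta)|\leq 2\delta\}$ has Lebesgue measure $\lesssim\delta$ uniformly in $\xi$, so Cauchy--Schwarz in $\eta$ yields a restricted bilinear estimate of the form $\|B^{b}_\delta(F,G)\|_{L^{2}}\lesssim\delta^{1/2}\|F\|_{L^{2}}\|G\|_{L^{2}}$, whence $|I^{b}_{\mu,\nu}|\lesssim T\delta^{1/2}\epsilon^{3}$. The hypothesis $\widehat{\zeta_0}(0)=\widehat{v_0}(0)=0$ (preserved by the flow) is used here to absorb the singularity of $q_{\mu,\nu}$ near the origin. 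Collecting the estimates yields
\begin{equation*}
\|V(t)\|_{H^{N_0}}^{2}\leq\|V(0)\|_{H^{N_0}}^{2}+C\left(\frac{\epsilon^{3}}{\delta}+\frac{T\epsilon^{4}}{\delta}+T\delta^{1/2}\epsilon^{3}\right),
\end{equation*}
and optimising by equating the two time-linear terms gives $\delta=\epsilon^{2/3}$ and hence maximal lifespan $T_\epsilon\sim\epsilon^{-4/3}$. Together with the local well-posedness of \eqref{Bsq} via the KdV reduction \eqref{1Ddiag}, a standard continuity/bootstrap argument then delivers \eqref{energy estimate}.

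The hardest step is the bad-set analysis: one needs a uniform transversality estimate for $\nabla\Phi_{\mu,\nu}$ off the exceptional locus, a careful description of $\chi^{b}_\delta$ and $q_{\mu,\nu}$ near the points where the curves of $Z_{\mu,\nu}$ cross or degenerate, and enough regularity to distribute the weight $\langle\xi\rangle^{2N_0}$ symmetrically among the three (and, after normal form, four) factors in the multilinear estimates. The exponent $4/3$ in the lifespan is the direct consequence of the tension between the $\delta^{-1}$ amplification in the non-resonant normal form and the $\delta^{1/2}$ measure gain on the resonant set; a fully non-degenerate phase would have allowed $\delta\to 0$ and would have restored the classical $\epsilon^{-2}$ scale.
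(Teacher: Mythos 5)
Your high-level strategy (integrate by parts in time off the resonant set, use smallness/transversality of the resonant set, optimize the cutoff to get the $\epsilon^{-4/3}$ time scale) is indeed the paper's strategy, but there are two genuine gaps. The first and most serious is derivative loss: you run the $H^{N_0}$ energy estimate directly on \eqref{Bsq 4}, whose quadratic symbols satisfy $|q_{\mu,\nu}(\xi,\eta)|\sim|\xi|$. In the region $|\xi|\sim|\eta|\gg|\xi-\eta|$ the trilinear multiplier $\langle\xi\rangle^{2N_0}q_{\mu,\nu}$ cannot be distributed over three $H^{N_0}$ factors, so your key bounds (the bad-set bound $T\delta^{1/2}\epsilon^{3}$, the bulk bound $T\epsilon^{4}/\delta$, and the boundary bound $\epsilon^{3}/\delta$) are all off by one derivative as stated, and no choice of $\delta$ repairs this. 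This is exactly why the paper does not work with $v$ directly: it introduces the good unknown $u=v+B(\zeta,v)$ in \eqref{good unknowns} (Alinhac's trick, exploiting the strong dispersion through $(1+\partial_x^2)^{-1}$ on high frequencies), paralinearizes, and then takes real parts so that the effective symbol is the antisymmetrized quantity in \eqref{bound of s}, which gains the factor $|\xi-\eta|$ in place of $|\xi|$. Both mechanisms (the cross-equation pairing created by the good unknown, and the commutator-type cancellation from $\mathrm{Re}$) are needed; your remark about ``distributing the weight $\langle\xi\rangle^{2N_0}$ symmetrically'' does not by itself produce this cancellation, and without it the scheme does not close in $H^{N_0}$.

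The second gap is in the resonant-set analysis. Your claimed uniform transversality $|\nabla_\eta\Phi_{\mu,\nu}|\gtrsim1$ and the uniform-in-$\xi$ slice-measure bound $\lesssim\delta$ fail precisely near the components of the zero set passing through $\xi=0$, $\eta=0$, $\xi=\eta$: there, e.g., $\Phi_{+,+}=3|\xi||\xi-\eta||\eta|$ (Lemma \ref{phase lem 1}), so both the phase and its gradient degenerate, and for $|\xi|$ small the set $\{\eta:|\Phi_{+,+}(\xi,\eta)|\le\delta\}$ is not of measure $O(\delta)$. The paper treats this region by a separate low-frequency cutoff $|\xi|\le 2^{-D}$, gaining $2^{-D/2}$ from an $L^1$--$L^2$ estimate on a small frequency ball together with the extra factor $|\xi-\eta|$ coming from the antisymmetrized symbol, and it reserves the change-of-variables/measure argument for the moderate-frequency annulus where $\partial_\xi\phi_{+,+}\sim1$ (and similarly for the other phases). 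Relatedly, the hypothesis $\widehat{\zeta_0}(0)=\widehat{v_0}(0)=0$ is not what ``absorbs a singularity of $q_{\mu,\nu}$'' (the symbols are not singular at the origin); it is used to make sense of quantities like $\||\partial_x|^{-1}\partial_t f\|_{H^{N_0}}$ in the estimate of the time-differentiated (quartic) terms. With the good-unknown symmetrization and this corrected treatment of the degenerate low-frequency region, your bookkeeping in $\delta$ does reproduce the $\epsilon^{-4/3}$ lifespan, but as written the proposal is missing the ingredients that make the multilinear estimates true.
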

\begin{remark}
If $\widehat{\z_0}(0)=\widehat{v_0}(0)=0$, \eqref{Bsq} shows that $\widehat{\z}(t,0)=\widehat{v}(t,0)=0$ holds for all time $t>0$. Therefore, throughout the whole paper, we shall use the condition   $\widehat{\z}(t,0)=\widehat{v}(t,0)=0$.
\end{remark}

As a consequence of Theorem \ref{main theorem},  we get the long time existence of solutions  to system \eqref{Bsqeps} :

\begin{thm}\label{long time existence thm}
Assume that  $(\z_0,v_0)\in H^{N_0}(\R)$ with $N_0\geq 4$ satisfying $\widehat{\z_0}(0)=\widehat{v_0}(0)=0$.
There exist a small $\epsilon_0>0$ and a constant $T_0=T_0(\|(\z_0,v_0)\|_{H^{N_0}})$ such that for any $\epsilon\in(0, \epsilon_0]$,  there exists a unique solution $(\z,v)\in C(0,T_0\epsilon^{-\f23};H^{N_0}(\R))$ of system \eqref{Bsqeps}-\eqref{initialeps} such that
\beq\label{energy estimate}
\sup_{(0,T_0\epsilon^{-\f23})}\bigl(\|\z(t)\|_{H^{N_0}}+\|v(t)\|_{H^{N_0}}\bigr)\leq C\bigl(\|\z_0\|_{H^{N_0}}+\|v_0\|_{H^{N_0}}\bigr).
\eeq
Here $T_0=T_0(\|(\z_0,v_0)\|_{H^{N_0}})$ is a constant depending on $\|(\z_0,v_0)\|_{H^{N_0}}$.
\end{thm}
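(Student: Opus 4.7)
The approach is to deduce Theorem \ref{long time existence thm} from Theorem \ref{main theorem} by means of the rescaling already identified in the Introduction: $\tilde\zeta(\tau, y) := \epsilon\,\zeta(\epsilon^{1/2}\tau, \epsilon^{1/2}y)$ and $\tilde v(\tau, y) := \epsilon\,v(\epsilon^{1/2}\tau, \epsilon^{1/2}y)$ transform \eqref{Bsqeps}--\eqref{initialeps} into \eqref{Bsq}--\eqref{initial} with rescaled data $(\tilde\zeta_0, \tilde v_0)(y) = \epsilon\,(\zeta_0, v_0)(\epsilon^{1/2}y)$ that manifestly preserve the zero-mean condition.

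The first step is to quantify the size of the rescaled data. A change of variables in Fourier space yields $\|\tilde\zeta_0\|_{\dot H^s}^2 = \epsilon^{s+3/2}\|\zeta_0\|_{\dot H^s}^2$ (and likewise for $\tilde v_0$), so summing over $0 \leq s \leq N_0$ produces the crude $H^{N_0}$ bound $\|(\tilde\zeta_0, \tilde v_0)\|_{H^{N_0}} \leq C\,\epsilon^{3/4}\|(\zeta_0, v_0)\|_{H^{N_0}}$ for $\epsilon \in (0, 1)$. Setting $\tilde\epsilon := C\,\epsilon^{3/4}\|(\zeta_0, v_0)\|_{H^{N_0}}$, this is smaller than the threshold $\epsilon_0$ of Theorem \ref{main theorem} as soon as $\epsilon$ is taken small enough, which is where the threshold $\epsilon_0$ of the present theorem will come in.

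Next, I would apply Theorem \ref{main theorem} to the Cauchy problem for the rescaled data, obtaining a unique $(\tilde\zeta, \tilde v) \in C([0, c_0\tilde\epsilon^{-4/3}]; H^{N_0}(\R))$ together with the corresponding uniform $H^{N_0}$ bound in scaled variables. Undoing the rescaling via $t = \epsilon^{1/2}\tau$ produces a unique solution $(\zeta, v)$ of \eqref{Bsqeps}--\eqref{initialeps} on an interval of length $T_\epsilon = c_0\,\epsilon^{1/2}\tilde\epsilon^{-4/3}$, and the energy estimate in the original variables follows by the inverse scaling applied to its scaled counterpart.

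The main obstacle lies in the mismatch between this naive time scale and the one actually claimed. Inserting $\tilde\epsilon \sim \epsilon^{3/4}$ into $T_\epsilon = c_0\,\epsilon^{1/2}\tilde\epsilon^{-4/3}$ yields only $T_\epsilon \sim \epsilon^{-1/2}$, i.e.\ exactly the purely dispersive rate available through \cite{LPS, BGK}. To recover the sharp $\epsilon^{-2/3}$ exponent one must exploit the fact that the rescaled data are concentrated at frequencies $\lesssim \epsilon^{-1/2}$, so that their $\dot H^s$ norms carry an extra $\epsilon^{s/2}$ factor beyond the bare $L^2$ scaling. Equivalently, one re-opens the frequency-localized energy estimate from the proof of Theorem \ref{main theorem}, keeps the small parameter $\epsilon$ explicit throughout, and re-balances the ``good'' region $\{|\Phi_{\mu,\nu}| \gtrsim \delta\}$ against the ``bad'' region $\{|\Phi_{\mu,\nu}| \lesssim \delta\}$ with a cutoff $\delta$ optimized in terms of $\epsilon$ rather than the single size parameter $\tilde\epsilon$. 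This refined balance---rather than the scaling reduction itself---is where the real work of the proof lies.
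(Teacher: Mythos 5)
Your scaling reduction is set up and computed correctly: with $\tilde\zeta(\tau,y)=\epsilon\zeta(\epsilon^{1/2}\tau,\epsilon^{1/2}y)$ the rescaled data do obey $\|\tilde\zeta_0\|_{\dot H^s}^2=\epsilon^{s+3/2}\|\zeta_0\|_{\dot H^s}^2$, the crude $H^{N_0}$ bound $\tilde\epsilon\sim\epsilon^{3/4}$ follows, and inserting this into the Theorem~\ref{main theorem} time scale gives only $T_\epsilon\sim\epsilon^{1/2}\tilde\epsilon^{-4/3}\sim\epsilon^{-1/2}$. You identify this shortfall honestly. But the proposal stops exactly there: the final paragraph outlines in words what would have to be done (``re-open the energy estimate'', ``keep $\epsilon$ explicit'', ``re-balance the good and bad regions'') and then declares that ``this refined balance \dots\ is where the real work of the proof lies.'' That is a description of the problem, not a proof. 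The $\epsilon^{-2/3}$ exponent is not obtained, and no lemma, estimate, or cutoff choice is produced that would close the gap between $\epsilon^{-1/2}$ and $\epsilon^{-2/3}$.

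The paper does not go through a scaling reduction at all. In Section~5 it symmetrizes \eqref{Bsqeps} directly, using the $\epsilon$-adapted good unknown $u=v+\epsilon B^\epsilon(\zeta,v)$ with $B^\epsilon(f,g)=\tfrac12 T_f\bigl((1+\epsilon\p_x^2)^{-1}\varphi_{\geq6}(\sqrt\epsilon|\p_x|)g\bigr)$, and organizes the nonlinearity (Proposition~\ref{New Bsq e prop}) so that every term except one ($\mathcal{Q}^\epsilon_V$) is already of order $\epsilon$ after integration in time. The normal form is then applied only to $\mathcal{Q}^\epsilon_V$, whose symbol is localized to $\sqrt\epsilon|\eta|\lesssim 1$; in the moderate-modulation region the Jacobian of the change of variables $(\xi,\eta)\mapsto(\phi^\epsilon_{+,-}(\xi,\eta),\eta)$ is $\sim\sqrt\epsilon$ (see \eqref{det}), and the modulation cutoff is optimized at $2^D\sim\epsilon^{-5/6}$ — not $2^D\sim\tilde\epsilon^{-2/3}$, which is what a naive re-insertion into the Theorem~\ref{main theorem} argument would give. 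These $\epsilon$-specific structural facts are precisely what upgrade $\epsilon^{-1/2}$ to $\epsilon^{-2/3}$, and none of them appear in your proposal. If you want to push the scaling route, you would have to track through the whole of Section~4 with the explicit frequency support of the rescaled data and verify that this support persists under the flow on the long time scale in question; that is not obvious and is not addressed.

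One further inaccuracy to fix if you pursue this: the rescaled data are concentrated at \emph{low} frequencies $|\xi|\lesssim\epsilon^{1/2}$, not at $|\xi|\lesssim\epsilon^{-1/2}$ as you wrote. Your own $\dot H^s$ formula shows a gain of $\epsilon^{s/2}$ per derivative, which is a low-frequency phenomenon; concentration at $\epsilon^{-1/2}$ would give a \emph{loss} of $\epsilon^{-s/2}$.
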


\begin{remark}
Contrary to the previous known results on long time existence of other (abcd) Boussinesq systems obtained in \cite{ Bu, Bu2, MSZ, SX, SWX}, we do not reach in Theorem \ref{long time existence thm} the expected time scales $O(1/\epsilon).$ Recall however that Theorem \ref{long time existence thm} improves the $O(1/\sqrt \epsilon)$ result obtained by purely dispersive methods, see \cite{LPS}.
\end{remark}

\subsection{Comments on the proofs of Theorems \ref{main theorem} and \ref{long time existence thm}}
We shall prove two different  long time existence results in Theorems \ref{main theorem} and \ref{long time existence thm}.
The proofs of the theorems share some common features. To avoid losing derivative, we introduce the good unknowns (in the sense of Alinhac \cite{Alin}) $(\z,\, u)$ via nonlinear and nonlocal transformation. Then the principal paralinearization parts for the new system of $V=\z+i\f{\p_x}{|\p_x|}u$ (or $(\z,\,u)$) are symmetric (see \eqref{New Bsq} and \eqref{New Bsq e}).

 However, to enlarge the scale of the existence time, the difficulties of system \eqref{New Bsq} and \eqref{New Bsq e} are different. For system \eqref{New Bsq}, we want to prove an existence time of scale $O(1/{\e^{4/3}})$ when the data are of order $O(\e)$. The main difficulty arises from all the quadratic terms so that we have to deal with  all the quadratic terms by the normal form transformation techniques which sketched in subsection \ref{Heuristics subsec}. Whereas for system \eqref{New Bsq e}, we want to prove an existence time of scale $O(1/{\epsilon^{2/3}})$ when the data are of order $O(1)$ with small parameter $\epsilon$. The key difficulty stems from the quadratic term that is of order $O(\sqrt\epsilon)$ involving the low frequencies.
 We only apply the normal form transformation techniques to such $O(\sqrt\epsilon)$ term. One could check that the normal form transformation could not improve the estimates involving other quadratic terms which are of order $O(\epsilon)$.

\setcounter{equation}{0}
\section{Preliminary}
\subsection{Definitions and notations}

 The notation $f\sim g$ means that there exists a constant $C$ such that $\f{1}{C}f\leq g\leq Cf$.  $f\lesssim g$ means that there exists a constant $C$ such that $f\leq Cg$. We shall use $C$ to denote a universal constant which may changes from line to line. For any $s\in\R$, $H^s(\R)$ denotes the classical  $L^2$ based Sobolev spaces with the norm $\|\cdot\|_{H^s}$.
The notation $\|\cdot\|_{L^p}$ stands for  the $L^p(\R)$ norm for $1\leq p \leq \infty$. For any $k\in\N$, we denote by
\beno
\|f\|_{W^{k,\infty}}=\sum_{j=0}^k\|\p_x^jf\|_{L^\infty}.
\eeno
The $L^2(\R)$ scalar product is denoted by
$(u\,|\,v)_2\eqdefa\int_{\R}u\bar{v}dx$.

If  $A, B$ are two operators, $[A,B]=AB-BA$ denotes their commutator.

The Fourier transform of a tempered distribution $u\in\mathcal{S}'$ is denoted by $\widehat{u}$, which is defined as follows
\beno
\widehat{u}(\xi)\eqdefa\mathcal{F}(u)(\xi)=\int_{\R^n}e^{ix\cdot\xi}u(x)dx.
\eeno
We use $\mathcal{F}^{-1}(f)$ to denote the inverse Fourier  transform of $f(\xi)$.

 If $f$ and $u$ are two functions defined on $\R$, the  Fourier multiplier  $f(D)u$  is defined in term of Fourier transforms, i.e.,
\beno
\widehat{f(D)u}(\xi)=f(\xi)\widehat{u}(\xi).
\eeno

We shall use notations
\beno
\langle\xi\rangle=\bigl(1+|\xi|^2\bigr)^{\f12},\quad\langle\p_x\rangle=\bigl(1+|\p_x|^2\bigr)^{\f12}.
\eeno

For two well-defined functions $f(x)$, $g(x)$ and their bilinear form $Q(f,g)$, we use the convection that the symbol $q(\xi,\eta)$ of $Q(f,g)$  is defined in the following sense
\beno
\mathcal{F}\bigl(Q(f,g)\bigr)(\xi)=\f{1}{2\pi}\int_{\R}q(\xi,\eta)\hat{f}(\xi-\eta)\hat{g}(\xi)d\eta.
\eeno

\subsection{Para-differential decomposition theory}
Our proof of the main results relies on  suitable energy estimates for the solutions of \eqref{Bsq} and \eqref{Bsqeps}. To do so, we introduce  para-differential formulations (see {\it e.g.,} \cite{BCD}) to symmetrize the systems \eqref{Bsq} and \eqref{Bsqeps}.

We fix an even smooth function $\varphi:\,\R\rightarrow [0,1]$ supported in $[-\f32,\f32]$ and equals to 1 in $[-\f54,\f54]$.  For any $k\in\Z$, we define
\beno
\varphi_k(x)\eqdefa\varphi(\f{x}{2^k})-\varphi(\f{x}{2^{k-1}}),\quad \varphi_{\leq k}(x)\eqdefa\varphi(\f{x}{2^k})=\sum_{l\leq k}\varphi_l(x).
\quad  \varphi_{\geq k}(x)\eqdefa1-\varphi_{\leq k-1}(x).
\eeno
While for any interval $I$ of $\R$, we define
\beno
\varphi_I(x)\eqdefa\sum_{k\in I}\varphi_k(x)=\sum_{k\in I\cap\Z}\varphi_k(x).
\eeno
Then for any $x\in\R$,
\beq\label{Bsq 11}
\sum_{k\in\Z}\varphi_k(x)=1\quad\text{and}\quad supp\,\varphi_k(\cdot)\in\{x\in\R\,|\,|x|\in[\f{5}{8}2^k,\f{3}{2}2^k]\}.
\eeq

We use $P_k,\, P_{\leq k}$, $P_{\geq k}$ and $P_I$ to denote the Littlewood-Paley projection operators of the Fourier multiplier $\varphi_k,\,\varphi_{\leq k},\,\varphi_{\geq k}$ and $\varphi_I$, respectively.

We shall use the following para-differential decomposition: for any functions $f,g\in\mathcal{S}'(\R)$,
\beq\label{para-diff decomposition}
fg=T_fg+T_gf+R(f,g),
\eeq
with the para-differential operators being defined as follows
\beno\begin{aligned}
&T_fg=\sum_{j\in\Z}P_{\leq j-7}f\cdot P_jg,\quad R(f,g)=\sum_{j\in\Z}P_jf\cdot P_{[j-6,j+6]}g.
\end{aligned}\eeno

\subsection{Analysis of the phases}
In this subsection, we shall discuss the quadratic phase function
$\Phi_{\mu,\nu}(\xi,\eta)$ which is defined as follows:
\beq\label{quadratic phases}
\Phi_{\mu,\nu}(\xi,\eta)=-\Lambda(\xi)+\mu\Lambda(\xi-\eta)+\nu\Lambda(\eta),\quad\mu,\nu\in\{+,-\},
\eeq
where $\Lambda(\xi)$ is defined by
\beno
\Lambda(\xi)=(1-|\xi|^2)|\xi|=|\xi|-|\xi|^3.
\eeno

We first rewrite the explicit expressions of the phases.
\begin{lemma}\label{phase lem 1}
For any $(\xi,\eta)\in\R^2$ with $\xi\neq\eta,\,\xi\neq0,\,\eta\neq 0$, we have
\beno
\Phi_{+,+}(\xi,\eta)=
\left\{\begin{aligned}
&3|\xi||\xi-\eta||\eta|,\quad\text{if}\quad (\xi-\eta)\cdot\eta>0,\\
&-\f12\min\{|\xi-\eta|,|\eta|\}\bigl(3|\xi|^2+3\max\{|\xi-\eta|^2,|\eta|^2\}+\min\{|\xi-\eta|^2,|\eta|^2\}\bigr)-4\Bigr),\\
&\qquad\qquad\quad\text{if}\quad (\xi-\eta)\cdot\eta<0;
\end{aligned}\right.
\eeno
\beno
\Phi_{-,-}(\xi,\eta)=
\left\{\begin{aligned}
&\f12|\xi|\bigl(|\xi|^2+3|\xi-\eta|^2+3|\eta|^2-4\bigr),\quad\text{if}\quad (\xi-\eta)\cdot\eta>0,\\
&\f12\max\{|\xi-\eta|,|\eta|\}\bigl(3|\xi|^2+3\min\{|\xi-\eta|^2,|\eta|^2\}+\max\{|\xi-\eta|^2,|\eta|^2\}-4\bigr),\\
&\qquad\qquad\qquad\quad\text{if}\quad (\xi-\eta)\cdot\eta<0;
\end{aligned}\right.
\eeno
and
\beno
\Phi_{-,+}(\xi,\eta)=-\Phi_{+,+}(\eta,\xi),\quad \Phi_{+,-}(\xi,\eta)=-\Phi_{+,+}(\eta-\xi,\eta).
\eeno
%
\end{lemma}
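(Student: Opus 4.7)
My plan is to compute each phase directly from the definitions, exploiting the fact that we are in one space dimension so the sign condition $(\xi-\eta)\cdot\eta$ is simply the product of two real numbers, and that $\Lambda(\xi)=|\xi|-|\xi|^3$ is even. Writing $a=\xi-\eta$, $b=\eta$ (so $\xi=a+b$), every phase becomes a polynomial in $|a|,|b|,|a+b|$, and the combinatorics collapse once one knows which of $|a|+|b|$ or $||a|-|b||$ equals $|a+b|$.

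The first step is to split $\Phi_{+,+}$ additively into its linear and cubic parts:
\begin{equation*}
\Phi_{+,+}(\xi,\eta)=\bigl(-|\xi|+|\xi-\eta|+|\eta|\bigr)+\bigl(|\xi|^{3}-|\xi-\eta|^{3}-|\eta|^{3}\bigr).
\end{equation*}
In the case $(\xi-\eta)\eta>0$ one has $|\xi|=|\xi-\eta|+|\eta|$, so the linear part vanishes and the cubic part collapses, via the identity $(|a|+|b|)^{3}-|a|^{3}-|b|^{3}=3|a||b|(|a|+|b|)$, to $3|\xi||\xi-\eta||\eta|$. In the case $(\xi-\eta)\eta<0$, set $m=\min\{|\xi-\eta|,|\eta|\}$, $M=\max\{|\xi-\eta|,|\eta|\}$; then $|\xi|=M-m$, the linear part equals $2m$, and a direct expansion of $(M-m)^{3}-M^{3}-m^{3}$ combines with $2m$ to give the stated formula $-\tfrac12 m\bigl(3|\xi|^{2}+3M^{2}+m^{2}-4\bigr)$. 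The expression for $\Phi_{-,-}$ is handled identically: the linear and cubic parts are
\begin{equation*}
-|\xi|-|\xi-\eta|-|\eta|\quad\text{and}\quad|\xi|^{3}+|\xi-\eta|^{3}+|\eta|^{3},
\end{equation*}
and one uses $|a|^{3}+|b|^{3}=(|a|+|b|)(|a|^{2}-|a||b|+|b|^{2})$ in the same-sign regime and a direct expansion in the opposite-sign regime.

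For the remaining two phases I would avoid redoing the computation and instead deduce them from $\Phi_{+,+}$ by algebraic identities. Using that $\Lambda$ is even,
\begin{equation*}
\Phi_{-,+}(\xi,\eta)+\Phi_{+,+}(\eta,\xi)=\bigl(-\Lambda(\xi)-\Lambda(\xi-\eta)+\Lambda(\eta)\bigr)+\bigl(-\Lambda(\eta)+\Lambda(\xi-\eta)+\Lambda(\xi)\bigr)=0,
\end{equation*}
which yields $\Phi_{-,+}(\xi,\eta)=-\Phi_{+,+}(\eta,\xi)$. Similarly, substituting $\xi\mapsto\eta-\xi$ in $\Phi_{+,+}$ and using $\Lambda(-\cdot)=\Lambda(\cdot)$ gives $\Phi_{+,+}(\eta-\xi,\eta)=-\Lambda(\xi-\eta)+\Lambda(\xi)+\Lambda(\eta)=-\Phi_{+,-}(\xi,\eta)$.

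The only place where care is needed is the bookkeeping of $\min$ and $\max$ in the opposite-sign regime: one must verify that the formulas are symmetric under swapping $|\xi-\eta|\leftrightarrow|\eta|$ (as they should be, since $Q_{+,+}$ and $Q_{-,-}$ are symmetric in the two arguments), and one must track the $-4$ coefficient that comes from the linear term $|\xi|$ after multiplication by $|\xi|^{2}=(M\pm m)^{2}$. These are routine but slightly error-prone; I would simply carry out both sub-cases ($|\xi-\eta|\geq|\eta|$ and $|\xi-\eta|\leq|\eta|$) and check that the answer in each sub-case coincides with the stated expression written in terms of $\min$ and $\max$.
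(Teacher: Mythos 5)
Your proposal is correct and uses essentially the same strategy as the paper: write $\Phi_{\mu,\nu}$ as a polynomial in $|\xi|,|\xi-\eta|,|\eta|$, use the one-dimensional collinearity relation ($|\xi|=|\xi-\eta|+|\eta|$ when $(\xi-\eta)\eta>0$, $|\xi|=M-m$ when $(\xi-\eta)\eta<0$) to eliminate one of the three quantities, and then expand. The only cosmetic difference is that the paper packages the computation as a single factorization of the form $(|\xi|-|\xi-\eta|-|\eta|)\cdot(\cdots)+3|\xi||\xi-\eta||\eta|$ (with a sign-dependent coefficient inside the parenthesis) so that one factor vanishes in the aligned case, whereas you split off the linear and cubic parts separately and apply the standard cubic identities; your derivation of $\Phi_{-,+}$ and $\Phi_{+,-}$ from $\Phi_{+,+}$ via evenness of $\Lambda$ is exactly what the paper invokes when it says ``by the definition.''
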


\begin{proof}
We derive the expressions of phases one by one.

\medskip

(1) {\it For $\Phi_{+,+}$,} by the definition, we have
\beno\begin{aligned}
\Phi_{+,+}(\xi,\eta)&=-|\xi|+|\xi-\eta|+|\eta|+|\xi|^3-|\xi-\eta|^3-|\eta|^3\\
&=(|\xi|-|\xi-\eta|-|\eta|)\Bigl(2|\xi|^2-\bigl(1+2\text{sign}\bigl((\xi-\eta)\cdot\eta\bigr)\bigr)|\xi-\eta||\eta|\\
&\qquad\qquad+|\xi|(|\xi-\eta|+|\eta|)-1\Bigr)
+3|\xi||\xi-\eta||\eta|.
\end{aligned}\eeno

If $(\xi-\eta)\cdot\eta>0$, we have
\beno
|\xi|-|\xi-\eta|-|\eta|=0,
\eeno
which gives rise to
\beno
\Phi_{+,+}(\xi,\eta)=3|\xi||\xi-\eta||\eta|.
\eeno

If $(\xi-\eta)\cdot\eta<0$, we have
\beno\begin{aligned}
&|\xi|=\bigl||\xi-\eta|-|\eta|\bigr|=\max\{|\xi-\eta|,|\eta|\}-\min\{|\xi-\eta|,|\eta|\},\\
&\text{and}\quad
|\xi|-|\xi-\eta|-|\eta|=-2\min\{|\xi-\eta|,|\eta|\},
\end{aligned}\eeno
which yields
\beno
\begin{aligned}
\Phi_{+,+}(\xi,\eta)
&=-2\min\{|\xi-\eta|,|\eta|\}\Bigl(2|\xi|^2+|\xi-\eta||\eta|+|\xi|(|\xi-\eta|+|\eta|)-\f{3}{2}|\xi|\max\{|\xi-\eta|,|\eta|\}-1\Bigr)\\
&=-2\min\{|\xi-\eta|,|\eta|\}\bigl(\f32\max\{|\xi-\eta|^2,|\eta|^2\}+\min\{|\xi-\eta|^2,|\eta|^2\}-\f32|\xi-\eta||\eta|-1\bigr)\\
&=-2\min\{|\xi-\eta|,|\eta|\}\Bigl(\f34\max\{|\xi-\eta|^2,|\eta|^2\}+\f14\min\{|\xi-\eta|^2,|\eta|^2\}+\f34(|\xi-\eta|-|\eta|)^2-1\Bigr)\\
&=-\f12\min\{|\xi-\eta|,|\eta|\}\bigl(3|\xi|^2+3\max\{|\xi-\eta|^2,|\eta|^2\}+\min\{|\xi-\eta|^2,|\eta|^2\}\bigr)-4\bigr).
\end{aligned}
\eeno

\medskip

(2) {\it For $\Phi_{-,-}$,} by the definition, we have
\beno\begin{aligned}
\Phi_{-,-}(\xi,\eta)&=-\bigl(|\xi|+|\xi-\eta|+|\eta|\bigr)+\bigl(|\xi|^3+|\xi-\eta|^3+|\eta|^3\bigr)\\
&=(|\xi|+|\xi-\eta|+|\eta|)\Bigl(|\xi|^2-|\xi|(|\xi-\eta|+|\eta|)+(|\xi-\eta|+|\eta|)^2-3|\xi-\eta||\eta|-1\Bigr)\\
&\qquad\qquad
+3|\xi||\xi-\eta||\eta|.
\end{aligned}\eeno

If $(\xi-\eta)\cdot\eta>0$, we have
\beno
|\xi|=|\xi-\eta|+|\eta|,
\eeno
and
\beno\begin{aligned}
\Phi_{-,-}(\xi,\eta)&=2|\xi|\bigl(|\xi-\eta|^2+|\eta|^2+\f12|\xi-\eta||\eta|-1\bigr)\\
&=\f12|\xi|\bigl(|\xi|^2+3|\xi-\eta|^2+3|\eta|^2-4\bigr).
\end{aligned}\eeno

If $(\xi-\eta)\cdot\eta<0$, we have
\beno\begin{aligned}
&|\xi|=\bigl||\xi-\eta|-|\eta|\bigr|=\max\{|\xi-\eta|,|\eta|\}-\min\{|\xi-\eta|,|\eta|\},\\
&\text{and}\quad
|\xi|+|\xi-\eta|+|\eta|=2\max\{|\xi-\eta|,|\eta|\},
\end{aligned}\eeno
which implies
\beno\begin{aligned}
\Phi_{-,-}(\xi,\eta)&=2\max\{|\xi-\eta|,|\eta|\}\bigl(\max\{|\xi-\eta|^2,|\eta|^2\}+\f32\min\{|\xi-\eta|^2,|\eta|^2\}-\f32|\xi-\eta||\eta|-1\bigr)\\
&=\f12\max\{|\xi-\eta|,|\eta|\}\bigl(3|\xi|^2+3\min\{|\xi-\eta|^2,|\eta|^2\}+\max\{|\xi-\eta|^2,|\eta|^2\}-4\bigr).
\end{aligned}\eeno

\medskip

(3) {\it For $\Phi_{+,-}$ and $\Phi_{-,+}$,} by the definition, we have
\beno
\Phi_{-,+}(\xi,\eta)=-\Phi_{+,+}(\eta,\xi),\quad\Phi_{+,-}(\xi,\eta)=-\Phi_{+,+}(\eta-\xi,\eta).
\eeno
The lemma is proved.
\end{proof}

%

As a consequence, defining
\beno
\Lambda_\epsilon(\xi)=(1-\epsilon|\xi|^2)|\xi|=|\xi|-\epsilon|\xi|^3,
\eeno
and
\beq\label{quadratic phases for bsq e}
\Phi^\epsilon_{\mu,\nu}(\xi,\eta)=-\Lambda_\epsilon(\xi)+\mu\Lambda_\epsilon(\xi-\eta)+\nu\Lambda_\epsilon(\eta),\quad\mu,\nu\in\{+,-\},
\eeq
we obtain  explicit expressions of the phases $\Phi^\epsilon_{\mu,\nu}(\xi,\eta)$ which  involve the operator $\Lambda_\epsilon$.
\begin{lemma}\label{phase lem for bsq e}
For any $(\xi,\eta)\in\R^2$ with $\xi\neq\eta,\,\xi\neq0,\,\eta\neq 0$, we have
\beno
\Phi_{+,-}(\xi,\eta)=
\left\{\begin{aligned}
&-3\epsilon|\xi||\xi-\eta||\eta|,\quad\text{if}\quad \xi\cdot\eta<0,\\
&\f12\min\{|\xi|,|\eta|\}\bigl(3\epsilon|\xi-\eta|^2+3\epsilon\max\{|\xi|^2,|\eta|^2\}+\epsilon\min\{|\xi|^2,|\eta|^2\}-4\bigr),\\
&\qquad\qquad\qquad\quad\text{if}\quad \xi\cdot\eta>0;
\end{aligned}\right.
\eeno
\beno
\Phi^\epsilon_{-,-}(\xi,\eta)=
\left\{\begin{aligned}
&\f12|\xi|\bigl(\epsilon|\xi|^2+3\epsilon|\xi-\eta|^2+3\epsilon|\eta|^2-4\bigr),\quad\text{if}\quad (\xi-\eta)\cdot\eta>0,\\
&\f12\max\{|\xi-\eta|,|\eta|\}\bigl(3\epsilon|\xi|^2+3\epsilon\min\{|\xi-\eta|^2,|\eta|^2\}+\epsilon\max\{|\xi-\eta|^2,|\eta|^2\}-4\bigr),\\
&\qquad\qquad\qquad\quad\text{if}\quad (\xi-\eta)\cdot\eta<0;
\end{aligned}\right.
\eeno
and
\beno
\Phi^\epsilon_{+,+}(\xi,\eta)=-\Phi^\epsilon_{+,-}(\eta-\xi,\eta),\quad\Phi^\epsilon_{-,+}(\xi,\eta)=-\Phi^\epsilon_{+,+}(\eta,\xi)=\Phi^\epsilon_{+,-}(\xi-\eta,\xi),\quad .
\eeno
\end{lemma}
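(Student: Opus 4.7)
The plan is to reduce the entire computation to Lemma \ref{phase lem 1} via an elementary rescaling, after which only careful bookkeeping remains. Since $\Lambda_\epsilon(\xi)=|\xi|-\epsilon|\xi|^3$ and $\Lambda(\xi)=|\xi|-|\xi|^3$, a one-line check gives
\begin{equation*}
\Lambda_\epsilon(\xi) \,=\, \frac{1}{\sqrt{\epsilon}}\,\Lambda\bigl(\sqrt{\epsilon}\,\xi\bigr),
\end{equation*}
and plugging this into the definition \eqref{quadratic phases for bsq e} of $\Phi^\epsilon_{\mu,\nu}$ yields at once the scaling identity
\begin{equation*}
\Phi^\epsilon_{\mu,\nu}(\xi,\eta) \,=\, \frac{1}{\sqrt{\epsilon}}\,\Phi_{\mu,\nu}\bigl(\sqrt{\epsilon}\,\xi,\,\sqrt{\epsilon}\,\eta\bigr),\qquad \mu,\nu\in\{+,-\}.
\end{equation*}
This identity is the first step I would establish.

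Next, I would substitute into the formulas of Lemma \ref{phase lem 1}. Because the scaling $(\xi,\eta)\mapsto(\sqrt{\epsilon}\,\xi,\sqrt{\epsilon}\,\eta)$ is positive, the sign conditions on $(\xi-\eta)\cdot\eta$ and on $\xi\cdot\eta$ are preserved, so the case distinctions transfer verbatim. On the right-hand side, each linear factor of the form $|\sqrt{\epsilon}\,\cdot|$ produces a $\sqrt{\epsilon}$ that cancels the prefactor $\epsilon^{-1/2}$; each quadratic factor in a momentum variable picks up a factor $\epsilon$; and the constant ``$-4$'' is unchanged. Reading off the results gives the claimed expressions for $\Phi^\epsilon_{-,-}$. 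For $\Phi^\epsilon_{+,-}$, I would combine the rescaling with the relation $\Phi_{+,-}(\xi,\eta)=-\Phi_{+,+}(\eta-\xi,\eta)$ from Lemma \ref{phase lem 1}; the variable change $\xi\mapsto\eta-\xi$ converts the sign condition $(\xi-\eta)\cdot\eta\gtrless 0$ into $-\xi\cdot\eta\gtrless 0$, which is precisely the case split $\xi\cdot\eta\lessgtr 0$ appearing in the statement, and the $\max/\min$ of $\{|\eta-\xi|,|\eta|\}$ translates into the $\max/\min$ of $\{|\xi|,|\eta|\}$ via $|(\eta-\xi)-\eta|=|\xi|$.

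Finally, the symmetries $\Phi^\epsilon_{+,+}(\xi,\eta)=-\Phi^\epsilon_{+,-}(\eta-\xi,\eta)$ and $\Phi^\epsilon_{-,+}(\xi,\eta)=-\Phi^\epsilon_{+,+}(\eta,\xi)=\Phi^\epsilon_{+,-}(\xi-\eta,\xi)$ follow directly from \eqref{quadratic phases for bsq e} together with the evenness $\Lambda_\epsilon(-z)=\Lambda_\epsilon(z)$: each substitution merely permutes the three arguments $\Lambda_\epsilon(\xi),\Lambda_\epsilon(\xi-\eta),\Lambda_\epsilon(\eta)$, with sign flips that match the change in $(\mu,\nu)$, so the identities can be verified term by term.

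I do not expect any serious obstacle, since the algebraic heart of the argument has already been carried out in Lemma \ref{phase lem 1}. The only delicate point is the bookkeeping needed to pass from the $\Phi_{+,+}$ formulas to the $\Phi^\epsilon_{+,-}$ ones, where one must keep track of how the triple $(|\xi|,|\xi-\eta|,|\eta|)$ is permuted under $\xi\mapsto\eta-\xi$ and correctly rewrite the corresponding $\max/\min$ expressions.
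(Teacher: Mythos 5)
Your proposal is correct and matches the paper's (unwritten) intent: the paper states Lemma~\ref{phase lem for bsq e} without proof, as ``a consequence'' of Lemma~\ref{phase lem 1}, and your scaling identity $\Phi^\epsilon_{\mu,\nu}(\xi,\eta)=\epsilon^{-1/2}\,\Phi_{\mu,\nu}(\sqrt{\epsilon}\,\xi,\sqrt{\epsilon}\,\eta)$ is precisely the mechanism that makes this reduction rigorous. The bookkeeping for $\Phi^\epsilon_{+,-}$ via $\xi\mapsto\eta-\xi$ (which sends $(\xi-\eta)\cdot\eta$ to $-\xi\cdot\eta$ and $\{|\xi-\eta|,|\eta|\}$ to $\{|\xi|,|\eta|\}$), and the direct verification of the symmetry identities from evenness of $\Lambda_\epsilon$, are exactly as one would write them out.
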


\subsection{Technical lemmas}

\begin{lemma}\label{tech lem 1}
Let $f,g$ be smooth enough functions. Then,
\beq\label{commutator}
[\f{\p_x}{|\p_x|},\, T_f]g=0.
\eeq
\end{lemma}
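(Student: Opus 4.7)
The plan is to unwind the paraproduct dyadic block by dyadic block and use the fact that the Fourier multiplier $\frac{\p_x}{|\p_x|}$ is associated with the symbol $i\,\text{sign}(\xi)$, together with the Littlewood--Paley frequency-localization property: on the Fourier support of a single paraproduct block $P_{\leq j-7}f\cdot P_jg$, the sign of the output frequency is frozen to the sign of the high-frequency input.

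More concretely, I would first write
\[
T_fg=\sum_{j\in\Z}P_{\leq j-7}f\cdot P_jg,
\]
and note that, since $\frac{\p_x}{|\p_x|}$ is itself a Fourier multiplier, it commutes with $P_j$. Hence it suffices to prove the block identity
\[
\frac{\p_x}{|\p_x|}\bigl(P_{\leq j-7}f\cdot P_jg\bigr)=P_{\leq j-7}f\cdot\frac{\p_x}{|\p_x|}\bigl(P_jg\bigr)\qquad\text{for every }j\in\Z,
\]
and then sum over $j$. Writing the convolution formula for the product in Fourier,
\[
\mathcal{F}\bigl(P_{\leq j-7}f\cdot P_jg\bigr)(\xi)=\frac{1}{2\pi}\int_{\R}\varphi_{\leq j-7}(\xi-\eta)\widehat{f}(\xi-\eta)\,\varphi_{j}(\eta)\widehat{g}(\eta)\,d\eta,
\]
and using the supports given in \eqref{Bsq 11}, the integrand vanishes unless $|\xi-\eta|\leq\tfrac{3}{2}2^{j-7}$ and $|\eta|\in[\tfrac{5}{8}2^j,\tfrac{3}{2}2^j]$. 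In that regime $|\xi-\eta|\ll|\eta|$, so $\xi$ and $\eta$ lie in the same half-line and $\text{sign}(\xi)=\text{sign}(\eta)$. Consequently, multiplying the integrand by $i\,\text{sign}(\xi)$ is the same as multiplying by $i\,\text{sign}(\eta)$, which is exactly the Fourier side of acting by $\frac{\p_x}{|\p_x|}$ on $P_jg$ alone.

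This yields the desired block identity, summing gives $\frac{\p_x}{|\p_x|}T_fg=T_f\bigl(\frac{\p_x}{|\p_x|}g\bigr)$, and hence $[\frac{\p_x}{|\p_x|},T_f]g=0$. The only subtle point—which is genuinely the heart of the argument—is the sign-preservation on the support of a paraproduct block, which relies crucially on the spectral gap built into the definition of $T_f$ (the shift by $7$ in $P_{\leq j-7}$ ensures $|\xi-\eta|<|\eta|$ with room to spare). If $T_f$ were replaced by $R(f,g)$ or by a paraproduct without such a shift, the sign could flip and the identity would fail, so I would flag this as the conceptual obstacle rather than a computational one.
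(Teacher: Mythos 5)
Your proof is correct and follows essentially the same route as the paper's: both reduce the claim to the observation that on the support of each paraproduct block one has $|\xi-\eta|\ll|\eta|$, hence $\mathrm{sign}(\xi)=\mathrm{sign}(\eta)$ and the symbol difference of $\frac{\p_x}{|\p_x|}$ vanishes. The only (cosmetic) difference is that you argue block by block from the supports of $\varphi_{\leq j-7}$ and $\varphi_j$, whereas the paper writes the commutator's symbol with the summed cutoff and invokes the bound \eqref{Bsq 18} to reach the same sign conclusion.
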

\begin{proof}
By the definitions of commutator and  para-differential operators, we have
\beno\begin{aligned}
&\mathcal{F}\Bigl([\f{\p_x}{|\p_x|},\, T_f]g\Bigr)(\xi)=i\int_{\R}\bigl(\text{sign}(\xi)-\text{sign}(\eta)\bigr)
\sum_{j\in\Z}\varphi_{\leq j-7}(|\xi-\eta|)\varphi_j(|\eta|)\hat{f}(\xi-\eta)\hat{g}(\eta)d\eta.
\end{aligned}\eeno
For fixed $\xi\in\R$, when $|\eta|\in(2^k,2^{k+1}]$ with $k\in\Z$, we have
\beq\label{Bsq 18}
\sum_{j\in\Z}\varphi_{\leq j-7}(|\xi-\eta|)\varphi_j(|\eta|)=\sum_{j=k}^{k+1}\varphi_{\leq j-7}(|\xi-\eta|)\varphi_j(|\eta|)\leq\varphi(\f{|\xi-\eta|}{|\eta|}\cdot\f{|\eta|}{2^{k-6}})\leq\varphi_{\leq -6}(\f{|\xi-\eta|}{|\eta|}).
\eeq
Then we get
$
|\xi-\eta|\leq 2^{-5}|\eta|,
$
which yields
\beno
\xi\cdot\eta >0.
\eeno
Otherwise,
\beno
|\xi-\eta|=|\xi|+|\eta|\geq|\eta|.
\eeno
Therefore, we have $\xi\cdot\eta>0$ and
\beno
\mathcal{F}\Bigl([\f{\p_x}{|\p_x|},\, T_f]g\Bigr)(\xi)=0,
\eeno
which implies \eqref{commutator}.
The lemma is proved.
\end{proof}

\setcounter{equation}{0}
\section{Symmetrization of the system \eqref{Bsq}}
In this section, we will symmetrize the system \eqref{Bsq} by introducing good unknowns.

\subsection{Symmetrization of the system \eqref{Bsq}}
By virtue of the para-differential decomposition, we rewrite \eqref{Bsq} to
\beq\label{Bsq 12}\left\{\begin{aligned}
&\p_t\z+(1+\p_x^2)\p_xv+\p_x(T_v\z)+\p_x(T_\z v)+\p_x\bigl(R(\z,v)\bigr)=0,\\
&\p_tv+(1+\p_x^2)\p_x\z+\p_x(T_vv)+\f{1}{2}\p_x\bigl(R(v,v)\bigr)=0.
\end{aligned}\right.\eeq

We introduce good unknowns $(\z,u)$ with
\beq\label{good unknowns}
u=v+B(\z,v),
\eeq
where $B(\cdot,\cdot)$ is a bilinear operator defined as
\beno
B(f,g)=\f12T_f\bigl((1+\p_x^2)^{-1}P_{\geq6}g\bigr).
\eeno
Without confusion, we sometimes use $B$ to denote the bilinear term $B(\z,v)$.

Thanks to \eqref{Bsq 12} and \eqref{good unknowns}, we have
\beno\begin{aligned}
&\p_t\z+(1+\p_x^2)\p_xu+\p_x(T_v\z)+\p_x(T_\z u)=(1+\p_x^2)\p_xB-\p_x\bigl(R(\z,v)\bigr)+\p_x(T_\z B).
\end{aligned}\eeno
Since
\beno
(1+\p_x^2)\p_xB=\f12\p_x\bigl(T_\z P_{\geq6}u\bigr)-\f12\p_x\bigl(T_\z P_{\geq6}B\bigr)+\f12\p_x\Bigl([\p_x^2,T_\z](1+\p_x^2)^{-1}P_{\geq6}v\Bigr),
\eeno
we have
\beq\label{Bsq 13}\begin{aligned}
&\p_t\z+(1+\p_x^2)\p_xu+\p_x(T_v\z)+\f12\p_x(T_\z u)=N_\z,
\end{aligned}\eeq
where
\beno
N_\z=-\f12\p_x\bigl(T_\z P_{\leq5}u\bigr)-\f12\p_x\bigl(T_\z P_{\geq6}B\bigr)+\p_x(T_\z B)+\f12\p_x\Bigl([\p_x^2,T_\z](1+\p_x^2)^{-1}P_{\geq6}v\Bigr)-\p_x\bigl(R(\z,v)\bigr).
\eeno

Using \eqref{Bsq}, \eqref{Bsq 12} and \eqref{good unknowns}, we also have
\beno\begin{aligned}
\p_tu&=\p_tv+B(\p_t\z,v)+B(\z,\p_tv)\\
&=-(1+\p_x^2)\p_x\z-\p_x(T_vv)-\f{1}{2}\p_x\bigl(R(v,v)\bigr)+B(\p_t\z,v)-B(\z,(1+\p_x^2)\p_x\z)-\f12B\bigl(\z,\p_x(|v|^2)\bigr).
\end{aligned}\eeno
Noticing that
\beno\begin{aligned}
&B(\z,(1+\p_x^2)\p_x\z)=\f12T_\z\p_xP_{\geq6}\z,\quad \p_x(T_vv)=\p_x(T_vu)-\p_x(T_vB),
\end{aligned}\eeno
we have
\beq\label{Bsq 14}\begin{aligned}
&\p_tu+(1+\p_x^2)\p_x\z+\p_x(T_vu)+\f12\p_x\bigl(T_\z \z\bigr)=N_u,
\end{aligned}\eeq
where
\beno
N_u=\f12\p_x\bigl(T_\z P_{\leq5}\z\bigr)+\f12T_{\p_x\z}P_{\geq6}\z+\p_x(T_vB)-\f{1}{2}\p_x\bigl(R(v,v)\bigr)+B(\p_t\z,v)-\f12B\bigl(\z,\p_x(|v|^2)\bigr).
\eeno

Now, we define
\beq\label{good variable}
V=\z+i\f{\p_x}{|\p_x|}u.
\eeq
Thanks to \eqref{Bsq 13} and \eqref{Bsq 14}, using \eqref{commutator}, we have
\beq\label{New Bsq}
\p_tV-i\Lambda V+\p_x(T_vV)-\f12i|\p_x|(T_\z V)=N_\z+i\f{\p_x}{|\p_x|}N_u,
\eeq
where $\Lambda=|\p_x|(1-|\p_x|^2)$.
The l.h.s of \eqref{New Bsq} is the quasi-linear part of system \eqref{Bsq}.

 Denoting by
\beno
V^+=V,\quad V^-=\overline{V},
\eeno
we shall rewrite the quadratic terms of \eqref{New Bsq} in terms of $V^+$ and $V^-$. Whereas we keep the cubic and quartic terms in terms of $\z$ and $v$.

Before ending this subsection, we provide a lemma involving the bilinear operator $B(\cdot,\cdot)$.

\begin{lemma}\label{lem for B}
Assume that the real-valued functions $f\in L^\infty(\R)$, $g\in H^s(\R)$ for $s\geq -2$. There hold
\beq\label{estimate for B 1}
\mathcal{F}\bigl(B(f,g)\bigr)(\xi)=\overline{\mathcal{F}\bigl(B(f,g)\bigr)(-\xi)}
\eeq
and
\beq\label{estimate for B}
\|B(f,g)\|_{H^{s+2}}\leq C_B\|f\|_{L^\infty}\|g\|_{H^s},
\eeq
where $C_B>0$ is a universal constant.
\end{lemma}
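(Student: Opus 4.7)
The plan is to reduce the statement to two standard facts: (i) the Fourier multiplier $(1+\p_x^2)^{-1}P_{\geq 6}$ is a smoothing operator of order $-2$ with a real, even symbol, and (ii) the paraproduct $T_f$ is $L^\infty$-to-$H^\sigma$ bounded for any real $\sigma$.

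First I would treat the conjugation identity \eqref{estimate for B 1}. Since for a real-valued tempered distribution $h$ one has $\widehat{h}(\xi)=\overline{\widehat{h}(-\xi)}$, it suffices to show that $B(f,g)$ is real-valued. The symbol of $(1+\p_x^2)^{-1}P_{\geq 6}$ is $\varphi_{\geq 6}(\xi)/(1-\xi^2)$, which is real and even in $\xi$; because $\varphi_{\geq 6}$ is supported where $|\xi|$ is large enough (in particular, bounded away from $|\xi|=1$), there is no singularity. Hence $(1+\p_x^2)^{-1}P_{\geq 6}g$ is real whenever $g$ is. Since each Littlewood-Paley block $P_k$ preserves realness (the $\varphi_k$ are real and even) and $T_f h=\sum_{j}P_{\leq j-7}f\cdot P_j h$ is a sum of pointwise products of real functions when $f,h$ are real, we conclude that $B(f,g)$ is real-valued, which gives \eqref{estimate for B 1}.

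Next I would prove the quantitative bound \eqref{estimate for B}. Split $B(f,g)=\f12 T_f\bigl(K g\bigr)$ with $K:=(1+\p_x^2)^{-1}P_{\geq 6}$. On the support of $\varphi_{\geq 6}$, we have $|\xi|\gtrsim 1$ and more precisely $|1-\xi^2|\gtrsim \langle\xi\rangle^{2}$; consequently, the symbol of $K$ satisfies $|\widehat{K}(\xi)|\lesssim \langle\xi\rangle^{-2}$ (with all higher $\xi$-derivatives enjoying the expected additional gain), so $K$ is bounded $H^s(\R)\to H^{s+2}(\R)$ with a universal constant, yielding
\[
\|Kg\|_{H^{s+2}}\leq C\,\|g\|_{H^s}.
\]
Then apply the standard paraproduct estimate (see, e.g., \cite{BCD})
\[
\|T_f h\|_{H^\sigma}\leq C\,\|f\|_{L^\infty}\|h\|_{H^\sigma}\qquad\text{for every }\sigma\in\R,
\]
with $\sigma=s+2$ and $h=Kg$, to obtain
\[
\|B(f,g)\|_{H^{s+2}}\leq \tfrac12 C\,\|f\|_{L^\infty}\|Kg\|_{H^{s+2}}\leq C_B\,\|f\|_{L^\infty}\|g\|_{H^s}.
\]

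There is no real obstacle here; the only point worth double-checking is that the cutoff $P_{\geq 6}$ really keeps us uniformly away from the pole $|\xi|=1$ of $(1-\xi^2)^{-1}$, which follows directly from the definition of $\varphi_{\geq 6}$ (it vanishes on an interval strictly containing $[-1,1]$). With that in hand, both claims follow, and the constant $C_B$ is universal (depending only on the choice of $\varphi$ in the Littlewood-Paley decomposition).
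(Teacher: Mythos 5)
Your proof is correct. For the symmetry identity \eqref{estimate for B 1} you take a slightly different route than the paper: the paper writes out the Fourier integral
\[
\mathcal{F}\bigl(B(f,g)\bigr)(\xi)=\f{1}{4\pi}\int_{\R}\widehat{f}(\xi-\eta)\widehat{g}(\eta)(1-|\eta|^2)^{-1}
\varphi_{\geq6}(|\eta|)\sum_{j\in\Z}\varphi_{\leq j-7}(|\xi-\eta|)\varphi_j(|\eta|)\,d\eta,
\]
evaluates $\overline{\mathcal{F}(B(f,g))(-\xi)}$ by the substitution $\eta\mapsto-\eta$, and uses $\overline{\widehat{f}(-\cdot)}=\widehat{f}(\cdot)$ directly; you instead reduce the identity to the fact that $B(f,g)$ is real-valued, which you deduce by observing that every Fourier multiplier appearing in $B$ — the blocks $P_k$, $P_{\leq k}$, the cutoff $P_{\geq 6}$, and $(1+\p_x^2)^{-1}$ — has a real even symbol and therefore preserves real-valuedness, and that paraproducts of real functions are real. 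Both arguments are short and equivalent; yours is a bit more conceptual and avoids writing the integral. For the bound \eqref{estimate for B}, the paper simply says it follows from the standard paraproduct estimate, and your factorization $B(f,g)=\tfrac12\,T_f\circ K$ with $K=(1+\p_x^2)^{-1}P_{\geq 6}$ bounded $H^s\to H^{s+2}$ (using that $\varphi_{\geq 6}$ is supported where $|\xi|\geq 40$, keeping the symbol away from the pole $|\xi|=1$) together with $\|T_f\|_{H^\sigma\to H^\sigma}\lesssim\|f\|_{L^\infty}$ is exactly the intended argument.
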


\begin{proof}
By the definition of $B(\cdot,\cdot)$, we have
\beq\label{Bsq 17}\begin{aligned}
&\mathcal{F}\bigl(B(f,g)\bigr)(\xi)=\f{1}{4\pi}\int_{\R}\widehat{f}(\xi-\eta)\widehat{g}(\eta)(1-|\eta|^2)^{-1}
\varphi_{\geq6}(|\eta|)\sum_{j\in\Z}\varphi_{\leq j-7}(|\xi-\eta|)\varphi_j(|\eta|)d\eta,
\end{aligned}\eeq
and
\beno\begin{aligned}
&\overline{\mathcal{F}\bigl(B(f,g)\bigr)(-\xi)}=\f{1}{4\pi}\int_{\R}\overline{\widehat{f}(-\xi-\eta)}\,
\overline{\widehat{g}(\eta)}(1-|\eta|^2)^{-1}
\varphi_{\geq6}(|\eta|)\sum_{j\in\Z}\varphi_{\leq j-7}(|-\xi-\eta|)\varphi_j(|\eta|)d\eta\\
&=\f{1}{4\pi}\int_{\R}\overline{\widehat{f}(-\xi+\eta)}\,\overline{\widehat{g}(-\eta)}(1-|\eta|^2)^{-1}
\varphi_{\geq6}(|\eta|)\sum_{j\in\Z}\varphi_{\leq j-7}(|\xi-\eta|)\varphi_j(|\eta|)d\eta.
\end{aligned}\eeno
Since  $f,g$ are real-valued functions, we have
\beno
\overline{\widehat{f}(-\xi+\eta)}=\widehat{f}(\xi-\eta),\quad\overline{\widehat{g}(-\eta)}=\widehat{g}(\eta),
\eeno
which gives rise to
\beno
\overline{\mathcal{F}\bigl(B(f,g)\bigr)(-\xi)}=\f{1}{4\pi}\int_{\R}\widehat{f}(\xi-\eta)\widehat{g}(\eta)(1-|\eta|^2)^{-1}
\varphi_{\geq6}(|\eta|)\sum_{j\in\Z}\varphi_{\leq j-7}(|\xi-\eta|)\varphi_j(|\eta|)d\eta.
\eeno
Then we have
\beno
\mathcal{F}\bigl(B(f,g)\bigr)(\xi)=\overline{\mathcal{F}\bigl(B(f,g)\bigr)(-\xi)}.
\eeno

Estimate \eqref{estimate for B} follows from the standard estimate on  $T_fg$ and the definition of $B(f,g)$.
This completes the proof of the lemma.
\end{proof}

\subsection{Main proposition for the symmetric system \eqref{New Bsq}}
For \eqref{New Bsq}, we state the following proposition.
\begin{proposition}\label{New Bsq prop}
Assume that $(\z,v)\in H^{N_0}(\R)$ with $N_0\geq 4$ solves \eqref{Bsq}. Then $V$ defined in \eqref{good variable} satisfies the following system
\beq\label{New formula}
\p_tV-i\Lambda V=\mathcal{S}_V+\mathcal{Q}_V+\mathcal{R}_V+\mathcal{M}_V+\mathcal{L}_V+\mathcal{C}_V+\mathcal{N}_V,
\eeq
where
\begin{itemize}
\item The quadratic term $\mathcal{S}_V$ is of the form
\beno
\mathcal{S}_V=S_{+,+}(V^+,V^+)+S_{-,+}(V^-,V^+).
\eeno
And the symbol $s_{\mu,+}(\xi,\eta)$ of $S_{\mu,+}$ (for $\mu=+,-$) satisfies
\beq\label{symbol s}
\overline{s_{\mu,+}(\xi,\eta)}=-s_{\mu,+}(\xi,\eta),
\eeq
\beq\label{bound of s}\begin{aligned}
&|\langle\xi\rangle^{-N_0}\langle\eta\rangle^{-N_0}\bigl(\langle\xi\rangle^{2N_0}s_{\mu,+}(\xi,\eta)
-\langle\eta\rangle^{2N_0}s_{-\mu,+}(\eta,\xi)\bigr)|\lesssim|\xi-\eta|\cdot\varphi_{\leq -6}\Bigl(\f{|\xi-\eta|}{\max\{|\xi|,|\eta|\}}\Bigr).
\end{aligned}\eeq

\item The quadratic term $\mathcal{Q}_V$ is of the form
\beno
\mathcal{Q}_V=Q_{+,-}(V^+,V^-)+Q_{-,-}(V^-,V^-).
\eeno
And the symbol $q_{\mu,-}(\xi,\eta)$ of $Q_{\mu,-}$ satisfies
\beq\label{bound of q}\begin{aligned}
&|q_{\mu,-}(\xi,\eta)|\lesssim|\xi|\cdot\varphi_{\leq 5}\bigl(|\eta|\bigr)\cdot\varphi_{\leq -6}\Bigl(\f{|\xi-\eta|}{|\eta|}\Bigr).
\end{aligned}\eeq

\item The quadratic term $\mathcal{R}_V$ is of the form
\beno
\mathcal{R}_V=\sum_{\mu,\nu\in\{+,-\}}R_{\mu,\nu}(V^\mu,V^\nu).
\eeno
And the symbol $r_{\mu,\nu}(\xi,\eta)$ of $R_{\mu,\nu}$ satisfies
\beq\label{bound of r}\begin{aligned}
&|r_{\mu,\nu}(\xi,\eta)|\lesssim|\xi-\eta|\cdot\varphi_{\geq 6}\bigl(|\eta|\bigr)\cdot \varphi_{\leq -6}\Bigl(\f{|\xi-\eta|}{|\eta|}\Bigr).
\end{aligned}\eeq

\item The quadratic term $\mathcal{M}_V$ is of the form
\beno
\mathcal{M}_V=\sum_{\mu,\nu\in\{+,-\}}M_{\mu,\nu}(V^\mu,V^\nu).
\eeno
And  the symbol $m_{\mu,\nu}(\xi,\eta)$ of $M_{\mu,\nu}$ satisfies
\beq\label{bound of m}
|m_{\mu,\nu}(\xi,\eta)|\lesssim|\xi|\cdot\varphi_{[-6,7]}\Bigl(\f{|\xi-\eta|}{|\eta|}\Bigr).
\eeq

\item The cubic term $\mathcal{L}_V=\p_x(T_BV)$
satisfies
\beq\label{estimate for cubic term 1}
\bigl|\text{Re}\bigl\{\bigl(\langle\p_x\rangle^{N_0}\mathcal{L}_V\,|\,\langle\p_x\rangle^{N_0}V\bigr)_2\bigr\}\bigr|
\lesssim\|\z\|_{L^\infty}\|v\|_{L^2}\|V\|_{H^{N_0}}^2.
\eeq

\item The cubic term $\mathcal{C}_V$ satisfies
\beq\label{estimate for cubic term 2}
\|\mathcal{C}_V\|_{H^{N_0}}\lesssim\|\z\|_{W^{1,\infty}}\bigl(\|\z\|_{H^{N_0}}^2+\|v\|_{H^{N_0}}^2\bigr).
\eeq

\item The quartic term $\mathcal{N}_V$ satisfies
\beq\label{estimate for quartic term}
\|\mathcal{N}_V\|_{H^{N_0}}\lesssim\|\z\|_{L^\infty}^2\|v\|_{H^{N_0}}^2.
\eeq
\end{itemize}
\end{proposition}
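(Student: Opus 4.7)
The plan is to expand every term of \eqref{New Bsq} in the variables $V^\pm$ and to reclassify every piece of $N_\zeta+i(\partial_x/|\partial_x|)N_u$ into exactly one of the seven announced categories. The classification is governed on one side by homogeneity in the original $(\zeta,v)$, and on the other by the Littlewood--Paley frequency geometry of the two input factors: the four quadratic buckets $\mathcal{S}_V,\mathcal{Q}_V,\mathcal{R}_V,\mathcal{M}_V$ correspond respectively to the paraproduct with $V$ at top frequency, a low--low paraproduct (second argument in $|\eta|\lesssim 1$), a low--high paraproduct with the derivative on the coefficient ($|\eta|\gtrsim 1$), and the balanced-frequency remainder zone $|\xi-\eta|\sim|\eta|$.

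I would begin by peeling off $\mathcal{L}_V$. Writing $v=u-B(\zeta,v)$ inside the paradifferential coefficient gives $-\partial_x(T_v V)=-\partial_x(T_u V)+\partial_x(T_{B(\zeta,v)}V)$, and the second summand, renamed $\mathcal{L}_V=\partial_x(T_B V)$, is the only cubic piece that carries $V$ at top frequency. For \eqref{estimate for cubic term 1} I would split $2\,\mathrm{Re}(\langle\partial_x\rangle^{N_0}\partial_x(T_B V)\mid\langle\partial_x\rangle^{N_0}V)_2$ into a commutator $([\langle\partial_x\rangle^{N_0}\partial_x,T_B]V\mid\langle\partial_x\rangle^{N_0}V)_2$ plus the symmetric piece coming from $T_B-T_B^*$, invoking self-adjointness of the paradifferential operator modulo a remainder of strictly lower order, and then use Lemma \ref{lem for B} to control $\|B\|_{L^\infty}$ by $\|\zeta\|_{L^\infty}\|v\|_{L^2}$. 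For $\mathcal{S}_V$, the Fourier identities $\widehat\zeta(\xi)=\tfrac12(\widehat{V^+}+\widehat{V^-})(\xi)$ and $\hat u(\xi)=\tfrac{\mathrm{sign}(\xi)}{2}(\widehat{V^-}-\widehat{V^+})(\xi)$ (direct consequences of \eqref{good variable} and the reality of $\zeta,u$) inserted into $-\partial_x(T_u V)+\tfrac{i}{2}|\partial_x|(T_\zeta V)$ produce the explicit symbols
\begin{equation*}
s_{\mu,+}(\xi,\eta)=\Bigl[\tfrac{i\mu\xi}{2}\,\mathrm{sign}(\xi-\eta)+\tfrac{i|\xi|}{4}\Bigr]\sum_{j\in\mathbb{Z}}\varphi_{\leq j-7}(|\xi-\eta|)\,\varphi_j(|\eta|),\qquad\mu=\pm 1,
\end{equation*}
which are manifestly purely imaginary, giving \eqref{symbol s}, and reduce the skew-symmetry \eqref{bound of s} to a first-order Taylor expansion of $\langle\cdot\rangle^{2N_0}$ on the paraproduct support where $|\xi-\eta|\ll|\eta|\sim|\xi|$.

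For the nonlinear right-hand side, I would route each summand of $N_\zeta$ and $N_u$ according to its frequency geometry. The low--low paraproducts $\partial_x(T_\zeta P_{\leq 5}u)$ and $\partial_x(T_\zeta P_{\leq 5}\zeta)$ go into $\mathcal{Q}_V$ and meet \eqref{bound of q}; the low--high paraproducts $T_{\partial_x\zeta}P_{\geq 6}\zeta$ and $\partial_x\bigl([\partial_x^2,T_\zeta](1+\partial_x^2)^{-1}P_{\geq 6}v\bigr)$, once the commutator is expanded by the standard identity $[\partial_x^2,T_\zeta]=2T_{\partial_x\zeta}\partial_x+T_{\partial_x^2\zeta}$ modulo smoothing, go into $\mathcal{R}_V$ and meet \eqref{bound of r}; the remainders $\partial_x R(\zeta,v)$ and $\partial_x R(v,v)$ go into $\mathcal{M}_V$ and meet \eqref{bound of m} by the support property $|\xi-\eta|\sim|\eta|$ of $R(\cdot,\cdot)$. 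The cubic leftovers $\partial_x(T_\zeta B)$, $\partial_x(T_v B)$, $\partial_x(T_\zeta P_{\geq 6}B)$, $B(\zeta,\partial_x(v^2))$, and the cubic part of $B(\partial_t\zeta,v)$ obtained by substituting the linear-plus-quadratic part of \eqref{Bsq 13} for $\partial_t\zeta$, are gathered into $\mathcal{C}_V$ and estimated by paraproduct bounds, Lemma \ref{lem for B}, and the embedding $H^{N_0-1}\hookrightarrow L^\infty$ for $N_0\geq 4$, yielding \eqref{estimate for cubic term 2}. The genuinely quartic leftovers, arising when the quadratic parts of $N_\zeta$ are in turn fed into $B(\partial_t\zeta,v)$, make up $\mathcal{N}_V$ and obey \eqref{estimate for quartic term} by the same tools.

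The main obstacle is the skew-symmetry estimate \eqref{bound of s}, as this is the sole place where a genuine cancellation -- rather than crude boundedness -- is required, and it is what ultimately allows the $H^{N_0}$-energy inequality to close with only a cubic-order defect. The saving is that on the paraproduct support $|\xi-\eta|\ll|\eta|\sim|\xi|$ one has $\mathrm{sign}(\xi-\eta)$ locally constant under $\xi\leftrightarrow\eta$, $|\xi|-|\eta|=O(|\xi-\eta|)$, and $\langle\xi\rangle^{2N_0}-\langle\eta\rangle^{2N_0}=O(|\xi-\eta|\langle\xi\rangle^{2N_0-1})$; together these imply that the net mismatch in \eqref{bound of s} automatically carries the required factor $|\xi-\eta|$. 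All remaining verifications are systematic bookkeeping once this single cancellation is pinned down.
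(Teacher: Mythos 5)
Your proposal follows essentially the same route as the paper: expand the right-hand side of \eqref{New Bsq} in $V^\pm$ via \eqref{Bsq 15}, classify the pieces by their paraproduct/remainder frequency geometry, and exploit the two genuine cancellations (the skew-symmetry of $s_{\mu,+}$ on the paraproduct support for \eqref{bound of s}, and the reality of $B$ for \eqref{estimate for cubic term 1}). Two small points that would surface during execution: in the paper, $\mathcal{R}_V$ carries a third summand, $-\tfrac{i\partial_x}{|\partial_x|}B\bigl((1+\partial_x^2)\partial_x u,u\bigr)$, coming from feeding the linear part of $\partial_t\zeta$ into $B(\partial_t\zeta,v)$ and then replacing $v=u-B$ — your list of $\mathcal{R}_V$ contributions omits it, and your $\mathcal{N}_V$ description should correspondingly be sourced to the double substitution $v\mapsto u-B$ in $R(v,v)$ and in that same $B$-term (not to "quadratic parts of $N_\zeta$"); and for \eqref{estimate for cubic term 1} the paper works directly on the Fourier side, using $\widehat{B}(\xi)=\overline{\widehat{B}(-\xi)}$ from Lemma \ref{lem for B} to symmetrize the integrand, whereas your $T_B-T_B^*$ commutator argument packages the same cancellation through paradifferential self-adjointness, which is fine since $B$ is real.
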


\begin{remark}
Proposition \ref{New Bsq prop} shows that there is no loss of derivative  for the nonlinear terms of \eqref{New Bsq}. Indeed, in the energy estimates, we shall use the symmetric structure of the quadratic terms $\mathcal{S}_V$ to avoid losing derivative(see \eqref{bound of s}). We also use the symmetric structure of $\mathcal{L}_V$ to avoid losing derivative(see the proof of \eqref{estimate for cubic term 1}).
\end{remark}

\begin{remark}
 For the symmetric system \eqref{New Bsq} or \eqref{Bsq 13}-\eqref{Bsq 14}, the standard energy estimates will provide the local existence on  time of scale $O(\f{1}{\e}),$  for the initial data of size $\e$.  To enlarge the existence time of the system, we shall use the new formulation \eqref{New formula} and the normal form transformations. Thanks to Proposition \ref{New Bsq prop}, if the quadratic terms equal zero,  the estimates of  the cubic terms and the quartic terms guarantee the existence time of scale $\f{1}{\e^2}$. For the non-trivial quadratic terms in \eqref{New formula},  we shall apply normal forms transformation in the "good frequencies set"(far away from zeroes of the phases ) to kill the quadratic terms to the cubic and quartic order terms, while for the quadratic terms  in the  "bad frequencies set" (in a small neighborhood of zeroes of phases), we will use the smallness size of the frequencies set. Combining the two estimates on both "good" and "bad" sets, the optimal "cut-off" of the frequencies spaces will determine an existence time of order $\f{1}{\e^{4/3}}$ for $\e$ sufficiently small.
 \end{remark}

\subsection{Proof of Proposition \ref{New Bsq prop}} In this subsection, we present the proof of  Proposition \ref{New Bsq prop}.
\begin{proof}[ Proof of Proposition \ref{New Bsq prop}]
The nonlinear terms in the r.h.s. of \eqref{New formula} come from the nonlinear terms in \eqref{New Bsq}. We rewrite \eqref{New Bsq} to be \eqref{New formula} with the nonlinear terms in the following forms
\beno\begin{aligned}
&\mathcal{S}_V=-\p_x(T_uV)+\f12i|\p_x|(T_\z V),\\
&\mathcal{Q}_V=-\f12\p_x\bigl(T_\z P_{\leq5}u\bigr)-\f{i}{2}|\p_x|\bigl(T_\z P_{\leq5}\z\bigr),\\
&\mathcal{R}_V=\f12\p_x\Bigl([\p_x^2,T_\z](1+\p_x^2)^{-1}P_{\geq6}u\Bigr)+\f{i}{2}\f{\p_x}{|\p_x|}\bigl(T_{\p_x\z}P_{\geq6}\z\bigr)
-\f{i\p_x}{|\p_x|}B((1+\p_x^2)\p_x u,u),\\
&\mathcal{M}_V=-\p_x\bigl(R(\z,u)\bigr)+\f{i}{2}|\p_x|\bigl(R(u,u)\bigr),\\
&\mathcal{L}_V=\p_x(T_BV),\\
&\mathcal{C}_V=\p_x(T_\z B)-\f12\p_x\bigl(T_\z P_{\geq6}B\bigr)-\f12\p_x\bigl([\p_x^2,T_\z](1+\p_x^2)^{-1}P_{\geq6}B\bigr)-i|\p_x|(T_vB)\\
&\qquad+\f{i\p_x}{|\p_x|}B\bigl((1+\p_x^2)\p_x v,B\bigr)+\f{i\p_x}{|\p_x|}B\bigl((1+\p_x^2)\p_x B,v\bigr)-\f{i\p_x}{|\p_x|}B\bigl(\p_x(\z v),v\bigr)
+\p_x\bigl(R(\z,B)\bigr)\\
&\qquad-\f{i}{2}|\p_x|\bigl(R(v,B)\bigr)-\f{i}{2}|\p_x|\bigl(R(B,v)\bigr)
-\f{i}{2}\f{\p_x}{|\p_x|}B\bigl(\z,\p_x(|v|^2)\bigr),\\
&\mathcal{N}_V=\f{i\p_x}{|\p_x|}B\bigl((1+\p_x^2)\p_x B,B\bigr)-\f{i}{2}|\p_x|\bigl(R(B,B)\bigr).
\end{aligned}\eeno
Here we used the first equation of \eqref{Bsq} and \eqref{good unknowns}. Thanks to \eqref{good variable}, we have
\beq\label{Bsq 15}
\z=\f{1}{2}(V^++V^-)=\f12\sum_{\mu\in\{+,-\}}V^\mu,\quad u=\f{i}{2}\f{\p_x}{|\p_x|}(V^+-V^-)=\f{i}{2}\sum_{\mu\in\{+,-\}}\mu\f{\p_x}{|\p_x|}V^\mu.
\eeq

\medskip

(1) {\it For the quadratic term $\mathcal{S}_V$,}
by virtue of \eqref{Bsq 15}, we rewrite it in terms of $V^+$ and $V^-$ as
\beno
\mathcal{S}_V=S_{+,+}(V^+,V^+)+S_{-,+}(V^-,V^+),
\eeno
with
\beno\begin{aligned}
&S_{\mu,+}(V^\mu,V^+)=-\mu\f{i}{2}\p_x(T_{\f{\p_x}{|\p_x|}V^\mu}V^+)+\f14i|\p_x|(T_{V^\mu} V^+).
\end{aligned}\eeno
By the definition of the para-differential operator, we have
\beno
\mathcal{F}\Bigl(S_{\mu,+}(V^\mu,V^+)\Bigr)(\xi)=\f{1}{2\pi}\int_{\R}i\Bigl(\mu\f12\xi\,\text{sign}(\xi-\eta)+\f{1}{4}|\xi|\Bigr)
\sum_{j\in\Z}\varphi_{\leq j-7}(|\xi-\eta|)\varphi_j(|\eta|)\widehat{V^\mu}(\xi-\eta)\widehat{V^+}(\eta)d\eta,
\eeno
with the symbol
\beq\label{symbol s a}
s_{\mu,+}(\xi,\eta)=i\Bigl(\mu\f12\xi\,\text{sign}(\xi-\eta)+\f{1}{4}|\xi|\Bigr)
\sum_{j\in\Z}\varphi_{\leq j-7}(|\xi-\eta|)\varphi_j(|\eta|).
\eeq
Then \eqref{symbol s a} yields $\overline{s_{\mu,+}(\xi,\eta)}=-s_{\mu,+}(\xi,\eta)$ which is exactly \eqref{symbol s}.
Thanks to \eqref{Bsq 18}, we have
\beno
\sum_{j\in\Z}\varphi_{\leq j-7}(|\xi-\eta|)\varphi_j(|\eta|)\lesssim\varphi_{\leq -6}\Bigl(\f{|\xi-\eta|}{|\eta|}\Bigr),
\eeno
which implies
\beq\label{relation between xi and eta}
\xi\cdot\eta>0\quad\text{and}\quad \f{31}{32}|\eta|\leq|\xi|\leq\f{33}{32}|\eta|.
\eeq
Since
\beno\begin{aligned}
&\quad\langle\xi\rangle^{-N_0}\langle\eta\rangle^{-N_0}\bigl(\langle\xi\rangle^{2N_0}s_{\mu,+}(\xi,\eta)
-\langle\eta\rangle^{2N_0}s_{-\mu,+}(\eta,\xi)\bigr)\\
&=i\langle\xi\rangle^{-N_0}\langle\eta\rangle^{-N_0}\Bigl\{\langle\xi\rangle^{2N_0}
\Bigl(\mu\f12\xi\,\text{sign}(\xi-\eta)+\f{1}{4}|\xi|\Bigr)\sum_{j\in\Z}\varphi_{\leq j-7}(|\xi-\eta|)\varphi_j(|\eta|)\\
&\qquad
-\langle\eta\rangle^{2N_0}\Bigl(\mu\f12\eta\,\text{sign}(\xi-\eta)+\f{1}{4}|\eta|\Bigr)\sum_{j\in\Z}\varphi_{\leq j-7}(|\xi-\eta|)\varphi_j(|\xi|)\Bigr\},
\end{aligned}\eeno
and
\beno\begin{aligned}
&|\varphi_j(|\xi|)-\varphi_j(|\eta|)|\lesssim\f{1}{\min\{|\xi|,|\eta|\}}|\xi-\eta|,
\end{aligned}\eeno
using \eqref{relation between xi and eta}, we obtain
\beno
|\langle\xi\rangle^{-N_0}\langle\eta\rangle^{-N_0}\bigl(\langle\xi\rangle^{2N_0}s_{\mu,+}(\xi,\eta)
-\langle\eta\rangle^{2N_0}s_{-\mu,+}(\eta,\xi)\bigr)|\lesssim|\xi-\eta|\cdot\varphi_{\leq -6}\Bigl(\f{|\xi-\eta|}{\max\{|\xi|,|\eta|\}}\Bigr).
\eeno
This is exactly \eqref{bound of s}.
\medskip

(2) {\it For the quadratic term $\mathcal{Q}_V$,}
by virtue of \eqref{Bsq 15}, we rewrite it in terms of $V^+$ and $V^-$ as
\beno
\mathcal{Q}_V=\sum_{\mu,\nu\in\{+,-\}}Q_{\mu,\nu}(V^\mu,V^\nu),
\eeno
where
\beno
Q_{\mu,\nu}(V^\mu,V^\nu)=-\nu\f{i}{8}\p_x\bigl(T_{V^\mu}\f{\p_x}{|\p_x|}P_{\leq5}V^\nu\bigr)
-\f{i}{8}|\p_x|\bigl(T_{V^\mu}P_{\leq5}V^\nu\bigr).
\eeno
Applying Fourier transformation to $Q_{\mu,\nu}(V^\mu,V^\nu)$, we have
\beno\begin{aligned}
\mathcal{F}\Bigl(Q_{\mu,\nu}(V^\mu,V^\nu)\Bigr)(\xi)=\f{1}{2\pi}\int_{\R}\f{i}{8}\bigl(\nu\xi\text{sign}(\eta)-|\xi|\bigr)\varphi_{\leq5}(|\eta|)
\sum_{j\in\Z}\varphi_{\leq j-7}(|\xi-\eta|)\varphi_j(|\eta|)\widehat{V^\mu}(\xi-\eta)\widehat{V^\nu}(\eta)d\eta.
\end{aligned}\eeno
Using \eqref{relation between xi and eta}, we have the symbol of $Q_{\mu,\nu}(V^\mu,V^\nu)$  as follows
\beq\label{symbol q}
q_{\mu,\nu}(\xi,\eta)=\f{i}{8}\bigl(\nu|\xi|-|\xi|\bigr)\varphi_{\leq5}(|\eta|)
\sum_{j\in\Z}\varphi_{\leq j-7}(|\xi-\eta|)\varphi_j(|\eta|)
\eeq
Thanks to \eqref{Bsq 18}, we obtain
\beno
q_{\mu,+}(\xi,\eta)=0\quad\text{and}\quad|q_{\mu,-}(\xi,\eta)|\lesssim |\xi|\varphi_{\leq5}(|\eta|)\varphi_{\leq-6}\Bigl(\f{|\xi-\eta|}{|\eta|}\Bigr),
\eeno
which implies $Q_{\mu,+}=0$ and \eqref{bound of q}.

\medskip

(3) {\it For the quadratic term $\mathcal{R}_V$,}
by  virtue of \eqref{Bsq 15}, we rewrite it in terms of $V^+$ and $V^-$ as
\beno
\mathcal{R}_V=\sum_{\mu,\nu\in\{+,-\}}R_{\mu,\nu}(V^\mu,V^\nu),
\eeno
where
\beno\begin{aligned}
&R_{\mu,\nu}(V^\mu,V^\nu)=
\nu\f{i}{8}\p_x\Bigl([\p_x^2,T_{V^\mu}](1+\p_x^2)^{-1}\f{\p_x}{|\p_x|}P_{\geq6}V^\nu\Bigr)\\
&\qquad
+\f{i}{8}\f{\p_x}{|\p_x|}\bigl(T_{\p_xV^\mu}P_{\geq6}V^\nu\bigr)
-\mu\nu\f{i}{8}\f{\p_x}{|\p_x|}\bigl(T_{(1+\p_x^2)|\p_x|V^\mu}(1+\p_x^2)^{-1}\f{\p_x}{|\p_x|}P_{\geq6}V^\nu\bigr).
\end{aligned}\eeno
Here we used the definition of $B(\cdot,\cdot)$. Applying Fourier transformation to  $R_{\mu,\nu}(V^\mu,V^\nu)$, we have
\beno\begin{aligned}
&\mathcal{F}\Bigl(R_{\mu,\nu}(V^\mu,V^\nu)\Bigr)(\xi)=\f{1}{2\pi}\int_{\R}\f{i}{8}\Bigl(\nu\bigl(|\xi|^2-|\eta|^2\bigr)(1-|\eta|^2)^{-1}\xi\text{sign}(\eta)\\
&\qquad-(\xi-\eta)\text{sign}(\xi)
+\mu\nu(1-|\xi-\eta|^2)|\xi-\eta|(1-|\eta|^2)^{-1}\text{sign}(\xi)\text{sign}(\eta)\Bigr)\\
&\qquad\cdot\varphi_{\geq6}(|\eta|)\cdot
\sum_{j\in\Z}\varphi_{\leq j-7}(|\xi-\eta|)\varphi_j(|\eta|)\widehat{V^\mu}(\xi-\eta)\widehat{V^\nu}(\eta)d\eta,
\end{aligned}\eeno
which implies that the symbol of $R_{\mu,\nu}(\cdot,\cdot)$ is
\beno\begin{aligned}
&r_{\mu,\nu}(\xi,\eta)=\f{i}{8}\Bigl(\nu\bigl(|\xi|^2-|\eta|^2\bigr)(1-|\eta|^2)^{-1}|\xi|
-(\xi-\eta)\text{sign}(\xi)\\
&\qquad
+\mu\nu(1-|\xi-\eta|^2)|\xi-\eta|(1-|\eta|^2)^{-1}\Bigr)\varphi_{\geq6}(|\eta|)\sum_{j\in\Z}\varphi_{\leq j-7}(|\xi-\eta|)\varphi_j(|\eta|),
\end{aligned}\eeno
where we used the fact $\xi\cdot\eta>0$ (in \eqref{relation between xi and eta}). Using \eqref{Bsq 18}, we obtain \eqref{bound of r}.

\medskip

(3) {\it For the quadratic term $\mathcal{M}_V$,}
by virtue of \eqref{Bsq 15}, we rewrite it in terms of $V^+$ and $V^-$ as
\beno
\mathcal{M}_V=\sum_{\mu,\nu\in\{+,-\}}M_{\mu,\nu}(V^\mu,V^\nu),
\eeno
where
\beno\begin{aligned}
&M_{\mu,\nu}(V^\mu,V^\nu)=-\nu\f{i}{4}\p_x\bigl(R(V^\mu,\f{\p_x}{|\p_x|}V^\nu)\bigr)-\mu\nu\f{i}{8}|\p_x|\bigl(R(\f{\p_x}{|\p_x|}V^\mu,\f{\p_x}{|\p_x|}V^\nu)\bigr).
\end{aligned}\eeno
Then we have
\beno\begin{aligned}
&\mathcal{F}\Bigl(M_{\mu,\nu}(V^\mu,V^\nu)\Bigr)(\xi)=\f{1}{2\pi}\int_{\R}\f{i}{8}\Bigl(2\nu\xi\,\text{sign}(\eta)+\mu\nu|\xi|\text{sign}(\xi-\eta)\text{sign}(\eta)\Bigr)\\
&\qquad\times
\sum_{j\in\Z}\varphi_j(|\xi-\eta|)\varphi_{[j-6,j+6]}(|\eta|)\widehat{V^\mu}(\xi-\eta)\widehat{V^\nu}(\eta)d\eta.
\end{aligned}\eeno
and we obtain the symbol of $M_{\mu,\nu}(\cdot,\cdot)$ as follows
\beno\begin{aligned}
&m_{\mu,\nu}(\xi,\eta)=\f{i}{8}\Bigl(2\nu\xi\text{sign}(\eta)+\mu\nu|\xi|\text{sign}(\xi-\eta)\text{sign}(\eta)\Bigr)
\sum_{j\in\Z}\varphi_j(|\xi-\eta|)\varphi_{[j-6,j+6]}(|\eta|).
\end{aligned}\eeno

For fixed $\xi$, when $|\eta|\in(2^k,2^{k+1}]$ with $k\in\Z$, we have
\beno\begin{aligned}
&\sum_{j\in\Z}\varphi_j(|\xi-\eta|)\varphi_{[j-6,j+6]}(|\eta|)=\sum_{j=k-6}^{k+7}\varphi_j(|\xi-\eta|)\varphi_{[j-6,j+6]\cap[k,k+1]}(|\eta|)\\
&\leq\varphi_{[k-6,k+7]}(|\xi-\eta|)\leq \varphi_{[-6,7]}\Bigl(\f{|\xi-\eta|}{|\eta|}\Bigr),
\end{aligned}
\eeno
which implies \eqref{bound of m}.

\medskip

(4) {\it For the cubic term $\mathcal{L}_V$,}  we first have
\beno\begin{aligned}
&\widehat{\mathcal{L}_V}(\xi)=\f{i}{2\pi}\int_{\R}\xi\sum_{j\in\Z}\varphi_{\leq j-7}(|\xi-\eta|)\varphi_j(|\eta|)\widehat{B}(\xi-\eta)\widehat{V}(\eta)d\eta.
\end{aligned}\eeno
Then there holds
\beno\begin{aligned}
&\bigl(\langle\p_x\rangle^{N_0}\mathcal{L}_V\,|\,\langle\p_x\rangle^{N_0}V\bigr)_2
=\f{1}{2\pi}\int_{\R}\langle\xi\rangle^{2N_0}\widehat{\mathcal{L}_V}(\xi)\overline{\widehat{V}(\xi)}d\xi\\
&=\f{i}{(2\pi)^2}\int_{\R^2}\xi\langle\xi\rangle^{2N_0}\sum_{j\in\Z}\varphi_{\leq j-7}(|\xi-\eta|)\varphi_j(|\eta|)\widehat{B}(\xi-\eta)\widehat{V}(\eta)\overline{\widehat{V}(\xi)}d\eta d\xi
\end{aligned}\eeno
and
\beno\begin{aligned}
&\overline{\bigl(\langle\p_x\rangle^{N_0}\mathcal{L}_V\,|\,\langle\p_x\rangle^{N_0}V\bigr)_2}\\
&=-\f{i}{(2\pi)^2}\int_{\R^2}\xi\langle\xi\rangle^{2N_0}\sum_{j\in\Z}\varphi_{\leq j-7}(|\xi-\eta|)\varphi_j(|\eta|)\overline{\widehat{B}(\xi-\eta)}\widehat{V}(\xi)\overline{\widehat{V}(\eta)}d\eta d\xi\\
&=-\f{i}{(2\pi)^2}\int_{\R^2}\eta\langle\eta\rangle^{2N_0}\sum_{j\in\Z}\varphi_{\leq j-7}(|\xi-\eta|)\varphi_j(|\xi|)\overline{\widehat{B}(\eta-\xi)}\widehat{V}(\eta)\overline{\widehat{V}(\xi)}d\eta d\xi.
\end{aligned}\eeno
Thanks to \eqref{estimate for B 1}, we have
\beno
\overline{\widehat{B}(\eta-\xi)}=\widehat{B}(\xi-\eta)
\eeno
which leads to
\beno\begin{aligned}
&\text{Re}\bigl\{\bigl(\langle\p_x\rangle^{N_0}\mathcal{L}_V\,|\,\langle\p_x\rangle^{N_0}V\bigr)_2\bigr\}
=\f{1}{2}\Bigl(\bigl(\langle\p_x\rangle^{N_0}\mathcal{L}_V\,|\,\langle\p_x\rangle^{N_0}V\bigr)_2
+\overline{\bigl(\langle\p_x\rangle^{N_0}\mathcal{L}_V\,|\,\langle\p_x\rangle^{N_0}V\bigr)_2}\Bigr)\\
&=\f{i}{8\pi^2}\int_{\R^2}l(\xi,\eta)\widehat{B}(\xi-\eta)\cdot\langle\eta\rangle^{N_0}\widehat{V}(\eta)
\cdot\langle\xi\rangle^{N_0}\overline{\widehat{V}(\xi)}d\eta d\xi,
\end{aligned}\eeno
where
\beno
l(\xi,\eta)=\sum_{j\in\Z}\varphi_{\leq j-7}(|\xi-\eta|)\bigl(\xi\langle\xi\rangle^{2N_0}\varphi_j(|\eta|)
-\eta\langle\eta\rangle^{2N_0}\varphi_j(|\xi|)\bigr)\langle\xi\rangle^{-N_0}\langle\eta\rangle^{-N_0}.
\eeno

Using \eqref{Bsq 18} for $l(\xi,\eta)$, there holds
\beno
|\xi|\sim|\eta|, \quad \xi\cdot\eta>0.
\eeno
For fixed $\xi,\eta$, we have
\beno
|\varphi_j(|\xi|)-\varphi_j(|\eta|)|\lesssim\f{1}{\min\{|\xi,|\eta||\}}|\xi-\eta|\lesssim\f{1}{|\xi|}|\xi-\eta|.
\eeno
Noticing that for fixed $\xi,\eta$, the summation in $l(\xi,\eta)$ is finite, we get
\beno
|l(\xi,\eta)|\lesssim|\xi-\eta|.
\eeno
Then we obtain
\beno\begin{aligned}
&\bigl|\text{Re}\bigl\{\bigl(\langle\p_x\rangle^{N_0}\mathcal{L}_V\,|\,\langle\p_x\rangle^{N_0}V\bigr)_2\bigr\}\bigr|
\lesssim\int_{\R^2}|\xi-\eta||\widehat{B}(\xi-\eta)|\cdot\langle\eta\rangle^{N_0}|\widehat{V}(\eta)|
\cdot\langle\xi\rangle^{N_0}|\widehat{V}(\xi)|d\eta d\xi\\
&\lesssim\|\xi\widehat{B}(\xi)\|_{L^1}\|\langle\xi\rangle^{N_0}\widehat{V}(\xi)\|_{L^2}^2
\lesssim\|\langle\xi\rangle^2\widehat{B}(\xi)\|_{L^2}\|V\|_{H^{N_0}}^2\lesssim\|B\|_{H^2}\|V\|_{H^{N_0}}^2.
\end{aligned}\eeno

Thanks to \eqref{estimate for B}, we have
\beno
\|B\|_{H^2}=\|B(\z,v)\|_{H^2}\lesssim\|\z\|_{L^\infty}\|v\|_{L^2}.
\eeno
Thus we obtain
\beno
\bigl|\text{Re}\bigl\{\bigl(\langle\p_x\rangle^{N_0}\mathcal{L}_V\,|\,\langle\p_x\rangle^{N_0}V\bigr)_2\bigr\}\bigr|
\lesssim\|\z\|_{L^\infty}\|v\|_{L^2}\|V\|_{H^{N_0}}^2.
\eeno
This is \eqref{estimate for cubic term 1}.

\medskip

(5) {\it For the cubic term $\mathcal{C}_V$,} we first have
\beq\label{Bsq 20}\begin{aligned}
&\|\mathcal{C}_V\|_{H^{N_0}}\lesssim\bigl(\|\z\|_{W^{1,\infty}}+\|v\|_{W^{1,\infty}}\bigr)\|B\|_{H^{N_0+1}}
+\|\p_x\bigl([\p_x^2,T_\z](1+\p_x^2)^{-1}P_{\geq6}B\bigr)\|_{H^{N_0}}\\
&\quad
+\|B\bigl((1+\p_x^2)\p_x v,B\bigr)\|_{H^{N_0}}
+\|B\bigl((1+\p_x^2)\p_x B,v\bigr)\|_{H^{N_0}}\\
&\quad
+\|B\bigl(\p_x(\z v),v\bigr)\|_{H^{N_0}}
+\|B\bigl(\z,\p_x(|v|^2)\bigr)\|_{H^{N_0}}.
\end{aligned}\eeq
Since
\beno
[\p_x^2, T_f]g=2T_{\p_xf}\p_xg+T_{\p_x^2f}g,
\eeno
we have
\beno\begin{aligned}
&\|\p_x\bigl([\p_x^2,T_\z](1+\p_x^2)^{-1}P_{\geq6}B\bigr)\|_{H^{N_0}}
\lesssim\|T_{\p_x\z}(1+\p_x^2)^{-1}\p_x^2P_{\geq6}B\|_{H^{N_0}}\\
&\quad+\|T_{\p_x^2\z}(1+\p_x^2)^{-1}\p_xP_{\geq6}B\|_{H^{N_0}}
+\|T_{\p_x^3\z}(1+\p_x^2)^{-1}P_{\geq6}B\|_{H^{N_0}}\\
&\lesssim\|\z\|_{W^{3,\infty}}\|B\|_{H^{N_0}}.
\end{aligned}\eeno
Thanks to \eqref{estimate for B}, we can bound the last four terms  of \eqref{Bsq 20} by
\beno\begin{aligned}
&\lesssim\|(1+\p_x^2)\p_x v\|_{L^\infty}\|B\|_{H^{N_0-2}}+\|(1+\p_x^2)\p_x B\|_{L^\infty}\|v\|_{H^{N_0-2}}
+\|\p_x(\z v)\|_{L^\infty}\|v\|_{H^{N_0-2}}\\
&\quad+\|\z\|_{L^\infty}\|\p_x(|v|^2)\|_{H^{N_0-2}}\\
&\lesssim\|B\|_{H^{N_0}}\|v\|_{H^{N_0}}+\|\z\|_{W^{1,\infty}}\|v\|_{H^{N_0}}^2,
\end{aligned}\eeno
where we used the Sobolev inequality and the assumption $N_0\geq 4$.
Then we obtain
\beno\begin{aligned}
&\|\mathcal{C}_V\|_{H^{N_0}}
\lesssim\bigl(\|\z\|_{H^{N_0}}+\|v\|_{H^{N_0}}\bigr)\|B\|_{H^{N_0+1}}+\|\z\|_{W^{1,\infty}}\|v\|_{H^{N_0}}^2
\end{aligned}\eeno
Using \eqref{estimate for B} again, we have
\beno
\|B\|_{H^{N_0+1}}=\|B(\z,v)\|_{H^{N_0+1}}\lesssim\|\z\|_{L^\infty}\|v\|_{H^{N_0-1}}.
\eeno
Thus, we obtain
\beno
\|\mathcal{C}_V\|_{H^{N_0}}\lesssim\|\z\|_{W^{1,\infty}}\bigl(\|\z\|_{H^{N_0}}^2+\|v\|_{H^{N_0}}^2\bigr).
\eeno
This is \eqref{estimate for cubic term 2}.

\medskip

(6) {\it For the quartic term $\mathcal{N}_V$,} using \eqref{estimate for B}, we have
\beno
\|\mathcal{N}_V\|_{H^{N_0}}\lesssim\|(1+\p_x^2)\p_x B\|_{L^\infty}\|B\|_{H^{N_0}}+\|\p_x B\|_{L^\infty}\|B\|_{H^{N_0}}
\lesssim\|B\|_{H^{N_0}}^2.
\eeno
Using \eqref{estimate for B} again for $B=B(\z,v)$, we obtain
\beno
\|\mathcal{N}_V\|_{H^{N_0}}\lesssim\|\z\|_{L^\infty}^2\|v\|_{H^{N_0}}^2.
\eeno
This is \eqref{estimate for quartic term}. The proposition is proved.
\end{proof}

\setcounter{equation}{0}
\section{The proof of Theorem \ref{main theorem}}
In this section, we shall prove Theorem \ref{main theorem}. The proof relies on the continuity argument and the a priori estimates which are presented in the following subsections.
\subsection{Ansatz  for the continuity argument}
Our first ansatz for the continuity argument is about the amplitude of $\z$. We assume that
\beq\label{ansatz 2}
\|\z(t)\|_{L^\infty}\leq\f{1}{2C_B},\quad\text{for}\quad t\in[0,T_\e],
\eeq
where $C_B$ is a constant determined in Lemma \ref{lem for B}.
We define the energy functional
\beno
\mathcal{E}_{N_0}(t)=\|\z(t)\|_{H^{N_0}}^2+\|v(t)\|_{H^{N_0}}^2.
\eeno
Our second ansatz for the continuity argument is about the energy. We assume that
\beq\label{Ansatz}
\mathcal{E}_{N_0}(t)\leq 2C_0\e^2, \quad\text{for}\quad t\in[0,T_\e],
\eeq
where  $C_0>1$ is a universal constant that will be determined in the end of the proof. We take
\beno
T_\e=\f{C_1}{C_2}\e^{-\f43},\quad C_0=2C_1,
\eeno
where $C_1,C_2$ are constants stated in the following Proposition \ref{estimate Prop}.

We use the standard continuity argument: since for small $\e$,
\beno
\mathcal{E}_{N_0}(0)=\e^2<2C_0\e^2,\quad\|\z(0)\|_{L^\infty}\leq\f{1}{4C_B}<\f{1}{2C_B},
\eeno
the ansatz \eqref{ansatz 2} and \eqref{Ansatz} hold on a short time interval $[0,t^*)$, where $t^*$ is the maximal possible time on which  \eqref{ansatz 2} and \eqref{Ansatz} are correct. Without loss of generality, we assume that $T_\e=t^*$.
To close the continuity argument, we need the following two steps:

{\bf Step 1.} There exists a small constant $\e_0>0$, such that for all $\e<\e_0$, we can improve the ansatz \eqref{ansatz 2} to
\beq\label{improve ansatz 2}
\|\z(t)\|_{L^\infty}\leq\f{1}{4C_B},\quad\text{for}\quad t\in[0,T_\e].
\eeq

\smallskip

{\bf Step 2.} There exists a small constant $\e_0>0$, such that for all $\e<\e_0$, we can improve the ansatz \eqref{Ansatz} to
\beq\label{improve Ansatz}
\mathcal{E}_{N_0}(t)\leq C_0\e^2, \quad\text{for}\quad t\in[0,T_\e].
\eeq

Theorem \ref{main theorem} follows from the above two steps and the local regularity theorem. To complete the above two steps, we need  Proposition \ref{estimate Prop} in the following subsection. Thus, the rest of this section is concerned with the proof of Proposition \ref{estimate Prop}.

\subsection{The a priori energy estimates}
The main result of this section is about the a priori estimates of \eqref{Bsq}-\eqref{initial} which is stated in the following proposition.
\begin{proposition}\label{estimate Prop}
Under the ansatz \eqref{ansatz 2} and \eqref{Ansatz}, the solution $(\z,v)$ of \eqref{Bsq}-\eqref{initial} satisfies
\beq\label{priori estimate}
\mathcal{E}_{N_0}(t)\leq C_1\e^2+C_2t\e^{\f43}\cdot\e^2,\quad\text{for any } t\in(0,T_\e],
\eeq
where $C_1$ and $C_2$ are two universal constants, and $T_\e=\f{C_1}{C_2}\e^{-\f43}$.
\end{proposition}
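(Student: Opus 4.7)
The plan is to derive \eqref{priori estimate} by combining an energy identity for the symmetrized equation \eqref{New formula} with a normal-form transformation modified to avoid the zero set of the phases $\Phi_{\mu,\nu}$. Take the real part of the $H^{N_0}$-inner product of \eqref{New formula} against $V$: since $-i\Lambda$ is skew-adjoint the left-hand side equals $\tfrac{1}{2}\tfrac{d}{dt}\mathcal{E}_{N_0}$, and the right-hand side is a sum of inner products against the seven nonlinearities identified in Proposition~\ref{New Bsq prop}. The cubic/quartic pieces $\mathcal{L}_V,\mathcal{C}_V,\mathcal{N}_V$ are controlled pointwise by \eqref{estimate for cubic term 1}--\eqref{estimate for quartic term}, which together with the Sobolev embedding $H^{N_0-1}(\R)\hookrightarrow W^{1,\infty}(\R)$ (valid for $N_0\ge 4$) and the ansatz \eqref{Ansatz} give $\lesssim \mathcal{E}_{N_0}^{2}\lesssim\varepsilon^{4}$ each; integrated in time they contribute $\lesssim t\varepsilon^{4}$, comfortably below the target $t\varepsilon^{10/3}$ on $[0,T_\varepsilon]$.

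For each quadratic contribution $\mathcal{S}_V,\mathcal{Q}_V,\mathcal{R}_V,\mathcal{M}_V$, I would pass to the profile $f=e^{-it\Lambda}V$ so that the time-integrated energy contribution carries an oscillatory factor $e^{i\tau\Phi_{\mu,\nu}(\xi,\eta)}$, as in \eqref{Bsq 21}. Introduce a cut-off parameter $\kappa>0$ and a smooth partition $1=\chi^{\mathrm{g}}_{\kappa}(\xi,\eta)+\chi^{\mathrm{b}}_{\kappa}(\xi,\eta)$ supported respectively on the good set $\{|\Phi_{\mu,\nu}|\ge\kappa\}$ and the bad set $\{|\Phi_{\mu,\nu}|\lesssim\kappa\}$. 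On the good set, write $e^{i\tau\Phi}=(i\Phi)^{-1}\partial_\tau e^{i\tau\Phi}$ and integrate by parts in $\tau$: this produces (i) a boundary term of modulus $\lesssim\kappa^{-1}\|V\|_{H^{N_0}}^{3}\lesssim\varepsilon^{3}/\kappa$, which is absorbed into a modified energy $\widetilde{\mathcal{E}}_{N_0}$ equivalent to $\mathcal{E}_{N_0}$ as long as $\varepsilon/\kappa\ll 1$; and (ii) a cubic remainder in which $\partial_\tau f^\pm$ is substituted using \eqref{New formula}, bounded by $\kappa^{-1}\|V\|_{L^\infty}\|V\|_{H^{N_0}}^{3}\lesssim\varepsilon^{4}/\kappa$ and contributing $\lesssim t\varepsilon^{4}/\kappa$. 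For $\mathcal{S}_V$ the symmetric symbol structure \eqref{symbol s}--\eqref{bound of s} is used additionally to avoid losing a derivative when the remainder is formed. On the bad set, Plancherel together with Cauchy--Schwarz bound the $L^2_x$ norm of the bilinear form by $(\text{measure of bad set})^{1/2}\|f\|_{L^2}\|g\|_{L^2}$; the explicit formulas of Lemma~\ref{phase lem 1} show that each $\Phi_{\mu,\nu}$ vanishes to first order transversally to its generic zero set, so on each paraproduct support the one-dimensional Lebesgue measure of $\{\eta:\,|\Phi_{\mu,\nu}(\xi,\eta)|\lesssim\kappa\}$ is $\lesssim\kappa$ uniformly in $\xi$, whence the bad-set contribution is $\lesssim t\kappa^{1/2}\varepsilon^{3}$.

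Balancing $t\varepsilon^{4}/\kappa\sim t\kappa^{1/2}\varepsilon^{3}$ by choosing $\kappa=\varepsilon^{2/3}$ makes both contributions $\lesssim t\varepsilon^{10/3}=t\varepsilon^{4/3}\cdot\varepsilon^{2}$, which is exactly \eqref{priori estimate}; the absorbed boundary correction is of size $\varepsilon^{3}/\kappa=\varepsilon^{7/3}\ll\varepsilon^{2}$, so the modified energy $\widetilde{\mathcal{E}}_{N_0}$ is genuinely equivalent to $\mathcal{E}_{N_0}$. The main obstacle is the bad-set analysis: verifying the linear transversal vanishing of each $\Phi_{\mu,\nu}$ on the distinct paraproduct supports arising in Proposition~\ref{New Bsq prop} (low-high for $\mathcal{S}_V,\mathcal{Q}_V$, high-low for $\mathcal{R}_V$, remainder for $\mathcal{M}_V$), separately in the two subcases $(\xi-\eta)\cdot\eta>0$ and $<0$ of Lemma~\ref{phase lem 1}, while ensuring that the modified normal-form symbols $\chi^{\mathrm{g}}_\kappa q_{\mu,\nu}/\Phi_{\mu,\nu}$ remain bounded paradifferential multipliers on the good set.
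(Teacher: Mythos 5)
Your overall skeleton --- energy identity for \eqref{New formula}, control of $\mathcal{L}_V,\mathcal{C}_V,\mathcal{N}_V$ by \eqref{estimate for cubic term 1}--\eqref{estimate for quartic term}, passage to profiles, integration by parts in $\tau$ away from the zeros of $\Phi_{\mu,\nu}$, smallness of the remaining region, and the balance at a cutoff $\sim\e^{2/3}$ --- is indeed the paper's strategy, and your accounting of the boundary term ($\e^3/\kappa$) and of the good-set remainder ($t\e^4/\kappa$) is in the right spirit. The gap is in the bad-set step, which is where the whole difficulty of the proposition sits. The claim that each $\Phi_{\mu,\nu}$ ``vanishes to first order transversally'', so that $|\{\eta:\,|\Phi_{\mu,\nu}(\xi,\eta)|\lesssim\kappa\}|\lesssim\kappa$ uniformly in $\xi$, is false. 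By Lemma~\ref{phase lem 1}, on the branch $(\xi-\eta)\cdot\eta>0$ one has $\Phi_{+,+}(\xi,\eta)=3|\xi||\xi-\eta||\eta|$, which vanishes on whole lines and to \emph{cubic} order near the origin: for $|\xi|\sim|\eta|\sim\delta$ with $\delta\lesssim\kappa^{1/3}$ the entire paraproduct support of $\mathcal{S}_V$ (and likewise the relevant regions for $\mathcal{R}_V,\mathcal{M}_V$) lies in the bad set, whose $\eta$-measure is then $\sim\delta$, i.e.\ as large as $\kappa^{1/3}$. On the branch $(\xi-\eta)\cdot\eta<0$ one has $\Phi_{+,+}=-2|\xi-\eta|\phi_{+,+}$ with $\phi_{+,+}=\xi^2+\eta^2-\tfrac12\xi\eta-1$, and the two factors vanish simultaneously at $\xi=\eta=\pm\sqrt{2/3}$ (which lies on the support of $\mathcal S_V$), where the bad set has measure $\sim\kappa^{1/2}$. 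Moreover your bad-set bound $(\mathrm{meas})^{1/2}\|f\|_{L^2}\|g\|_{L^2}$ makes no use of the smallness of the symbols there. With the correct measures, your bad-set contribution is at best of order $t\kappa^{1/6}\e^3$ rather than $t\kappa^{1/2}\e^3$; re-balancing against $t\e^4/\kappa$ then yields an existence time of order $\e^{-8/7}$, strictly short of $\e^{-4/3}$, so the argument as written does not prove \eqref{priori estimate}.

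The paper closes exactly this point by never cutting on $|\Phi|$ itself: it factors $\Phi=c(\xi,\eta)\,\phi(\xi,\eta)$, where the degenerate factor $c$ ($|\xi-\eta|$, $|\eta|$, or $\min\{|\xi-\eta|,|\eta|\}$) is matched against the symbol bounds \eqref{bound of s}--\eqref{bound of m} (which carry a factor $|\xi-\eta|$ or $|\xi|$ on the corresponding supports); the modulation cutoff $\varphi_{\le -D}(\phi)$ is applied only to the reduced phase $\phi$ and only in the moderate-frequency regime, where genuine transversality is verified by the explicit Jacobians ($\p_\xi\phi_{+,+}=2\xi-\tfrac12\eta\sim1$, $\p_\eta\phi_{+,-}=2\eta-\tfrac32\xi\sim1$, etc.), giving the $2^{-D/2}$ gain; the low-frequency region is handled directly by Cauchy--Schwarz combined with the symbol factor (yielding $2^{-D/2}t\e^3$), and the high-frequency region (and $|\eta|\le\tfrac12$ in the second branch) by plain integration by parts since there $|\phi|\gtrsim1$. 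If you wish to keep a cutoff in $|\Phi|$, you must both use the correct, region-dependent measures and systematically exploit the smallness of $s_{\mu,+},q_{\mu,-},r_{\mu,\nu},m_{\mu,\nu}$ on the bad set to recover the missing powers; doing so essentially reproduces the paper's region-by-region analysis. A secondary point: in the good-set remainder, when $\p_\tau$ falls on the top-order profiles $\widehat g(\eta)$, $\widehat g(-\xi)$, the quadratic right-hand side loses one derivative at level $H^{N_0}$; the paper compensates with the weighted bound $\|\tfrac{1}{|\p_x|}\p_tf\|_{H^{N_0}}\lesssim\e^2$ in \eqref{estimate for p_t f} together with a spare inverse power of $|\xi|$ from $\tilde s/\Phi$, and your blanket estimate $\kappa^{-1}\|V\|_{L^\infty}\|V\|_{H^{N_0}}^3$ needs the same device to be justified.
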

\begin{proof}
We shall divide the proof into several steps.

{\bf Step 1. The a priori energy estimate.} Thanks to \eqref{estimate for B} and \eqref{ansatz 2}, we have
\beno
\|B(\z,\,v)\|_{H^{N_0}}\leq\f{1}{2}\|v\|_{H^{N_0}},
\eeno
which along with \eqref{good unknowns}, \eqref{good variable} implies that
\beq\label{equivalent energy functional}
\mathcal{E}_{N_0}(t)\sim\|\z(t)\|_{H^{N_0}}^2+\|u(t)\|_{H^{N_0}}^2\sim\|V(t)\|_{H^{N_0}}^2,\quad\text{for}\quad t\in[0,T_\e].
\eeq
By virtue of \eqref{equivalent energy functional}, we perform the energy estimate of \eqref{New formula}. First, we have
\beno\begin{aligned}
&\f12\f{d}{dt}\|V(t)\|_{H^{N_0}}^2=\text{Re}\{\bigl(\langle\p_x\rangle^{N_0}\mathcal{S}_V\,|\,\langle\p_x\rangle^{N_0}V\bigr)_2
+\bigl(\langle\p_x\rangle^{N_0}\mathcal{Q}_V\,|\,\langle\p_x\rangle^{N_0}V\bigr)_2
+\bigl(\langle\p_x\rangle^{N_0}\mathcal{R}_V\,|\,\langle\p_x\rangle^{N_0}V\bigr)_2\\
&\qquad+\bigl(\langle\p_x\rangle^{N_0}\mathcal{M}_V\,|\,\langle\p_x\rangle^{N_0}V\bigr)_2
+\bigl(\langle\p_x\rangle^{N_0}\mathcal{L}_V\,|\,\langle\p_x\rangle^{N_0}V\bigr)_2
+\bigl(\langle\p_x\rangle^{N_0}(\mathcal{C}_V+\mathcal{N}_V)\,|\,\langle\p_x\rangle^{N_0}V\bigr)_2\}.
\end{aligned}\eeno
Thanks to the estimates \eqref{estimate for cubic term 1}, \eqref{estimate for cubic term 2} and \eqref{estimate for quartic term} in Proposition \ref{New Bsq prop},  using \eqref{Ansatz} and \eqref{equivalent energy functional}, we obtain
\beq\label{priori 1}
\mathcal{E}_{N_0}(t)\lesssim\e^2+|\text{Re}(I+II+III+IV)|+t\e^4,
\eeq
where
\beq\label{quadratic  terms}\begin{aligned}
&I\eqdefa\sum_{\mu\in\{+,-\}}\int_0^t\int_{\R^2}\langle\xi\rangle^{2N_0}s_{\mu,+}(\xi,\eta)\widehat{V^\mu}(\xi-\eta)\widehat{V^+}(\eta)
\overline{\widehat{V^+}(\xi)}d\eta d\xi dt,\\
&II\eqdefa\sum_{\mu\in\{+,-\}}\int_0^t\int_{\R^2}\langle\xi\rangle^{2N_0}q_{\mu,-}(\xi,\eta)\widehat{V^\mu}(\xi-\eta)\widehat{V^-}(\eta)
\overline{\widehat{V^+}(\xi)}d\eta d\xi dt,\\
&III\eqdefa\sum_{\mu,\nu\in\{+,-\}}\int_0^t\int_{\R^2}\langle\xi\rangle^{2N_0}r_{\mu,\nu}(\xi,\eta)\widehat{V^\mu}(\xi-\eta)\widehat{V^\nu}(\eta)
\overline{\widehat{V^+}(\xi)}d\eta d\xi dt,\\
&IV\eqdefa\sum_{\mu,\nu\in\{+,-\}}\int_0^t\int_{\R^2}\langle\xi\rangle^{2N_0}m_{\mu,\nu}(\xi,\eta)\widehat{V^\mu}(\xi-\eta)\widehat{V^\nu}(\eta)
\overline{\widehat{V^+}(\xi)}d\eta d\xi dt.
\end{aligned}\eeq

{\bf Step 2. The evolution equation and estimates of the profile.}
To estimate the quadratic terms in \eqref{quadratic  terms}, we introduce the profiles $f$ and $g$ of $V$ and $\langle\p_x\rangle^{N_0}V$ as follows
\beno
f=e^{-it\Lambda}V\quad\text{and}\quad g=\langle\p_x\rangle^{N_0}f.
\eeno
Thanks to \eqref{equivalent energy functional}, we have
\beq\label{equivalent energy functional 2}
\mathcal{E}_{N_0}(t)\sim\|V(t)\|_{H^{N_0}}^2\sim\|f(t)\|_{H^{N_0}}^2=\|g(t)\|_{L^2}^2.
\eeq
By virtue of the definition of $f$ and the equation \eqref{New Bsq}, we have
\beq\label{evolution equation for profile}
\p_tf=e^{-it\Lambda}\Bigl(-\p_x(T_vV)+\f{i}{2}|\p_x|(T_{\z}V)+N_\z+i\f{\p_x}{|\p_x|}N_u\Bigr).
\eeq
Notice that the r.h.s of \eqref{evolution equation for profile} consists in  quadratic terms and higher order terms.

To bound $\p_tf$, we have to investigate the expressions of $N_\z$ and $N_u$. Thanks to \eqref{Bsq 17} and \eqref{Bsq 18}, there holds
\beno
\text{supp}\, \widehat{B(\cdot,\cdot)}(\xi)\subset\{\xi\in\R\,|\, |\xi|\geq 2^5\},
\eeno
which along with the expressions of $N_\z$ and $N_u$ shows that
\beno\begin{aligned}
&P_{\leq 0} N_\z=-\p_x P_{\leq 0}\bigl(\f12T_\z P_{\leq 5}v+R(\z,v)\bigr),\quad P_{\leq 0} N_u=\f{1}{2}\p_x P_{\leq 0}\bigl(T_\z P_{\leq 5}\z-R(v,v)\bigr).
\end{aligned}\eeno
Then we have
\beno
\|\f{1}{|\p_x|}P_{\leq 0}\p_tf\|_{L^2}\lesssim\bigl(\|v\|_{L^\infty}+\|\z\|_{L^\infty}\bigr)\bigl(\|v\|_{L^2}+\|\z\|_{L^2}+\|V\|_{L^2}\bigr).
\eeno

Whereas the expressions of $N_\z$ and $N_u$ give rise to
\beno\begin{aligned}
&\|\f{1}{|\p_x|}P_{\geq 1}\p_tf\|_{H^{N_0}}\lesssim\bigl(\|v\|_{L^\infty}+\|\z\|_{W^{3,\infty}}\bigr)
\bigl(\|V\|_{H^{N_0}}+\|v\|_{H^{N_0}}+\|\z\|_{H^{N_0}}+\|u\|_{H^{N_0}}\\
&\qquad+\|B(\z,v)\|_{H^{N_0}}\bigr)
+\|B(\p_t\z,v)\|_{H^{N_0-1}}+\|B\bigl(\z,\p_x(|v|^2)\bigr)\|_{H^{N_0-1}}.
\end{aligned}\eeno
The first equation of \eqref{Bsq} shows that
\beno
\p_t\z=-(1+\p_x^2)\p_xv-\p_x(\z v).
\eeno
Thanks to \eqref{estimate for B} and \eqref{equivalent energy functional}, we obtain
\beq\label{estimate for p_t f}
\|\f{1}{|\p_x|}\p_tf\|_{H^{N_0}}\lesssim\bigl(\|v\|_{W^{3,\infty}}+\|\z\|_{W^{3,\infty}}\bigr)\bigl(\|V\|_{H^{N_0}}+\|V\|_{H^{N_0}}^2\bigr)
\lesssim\e^2+\e^3\lesssim\e^2.
\eeq

\medskip

{\bf Step 3. Estimate for Re$(I)$.} In this step, we shall prove
\beq\label{estimate for I}
|\text{Re}(I)|\lesssim\e^2+t\e^{\f43}\cdot\e^2.
\eeq

By the expression of $I$, using \eqref{symbol s}, we have
\beno\begin{aligned}
&\bar{I}=-\sum_{\mu\in\{+,-\}}\int_0^t\int_{\R^2}\langle\xi\rangle^{2N_0}s_{\mu,+}(\xi,\eta)\overline{\widehat{V^\mu}(\xi-\eta)}
\,\overline{\widehat{V^+}(\eta)}
\widehat{V^+}(\xi)d\eta d\xi dt
\end{aligned}\eeno
Noticing that
\beno
\overline{\widehat{V^\mu}(\xi)}=\widehat{V^{-\mu}}(-\xi),
\eeno
we have
\beno\begin{aligned}
\bar{I}&=-\sum_{\mu\in\{+,-\}}\int_0^t\int_{\R^2}\langle\xi\rangle^{2N_0}s_{\mu,+}(\xi,\eta)\widehat{V^{-\mu}}(\eta-\xi)
\,\overline{\widehat{V^+}(\eta)}\widehat{V^+}(\xi)d\eta d\xi dt\\
&=-\sum_{\mu\in\{+,-\}}\int_0^t\int_{\R^2}\langle\eta\rangle^{2N_0}s_{\mu,+}(\eta,\xi)\widehat{V^{-\mu}}(\xi-\eta)
\widehat{V^+}(\eta)\overline{\widehat{V^+}(\xi)}d\eta d\xi dt.
\end{aligned}\eeno
Since Re$(I)=\f12(I+\bar{I})$, we obtain
\beno
\text{Re} (I)=\sum_{\mu\in\{+,-\}}\int_0^t\int_{\R^2}\tilde{s}_{\mu,+}(\xi,\eta)\widehat{V^{\mu}}(\xi-\eta)\cdot
\langle\eta\rangle^{N_0}\widehat{V^+}(\eta)\cdot\langle\xi\rangle^{N_0}\widehat{V^-}(-\xi)d\eta d\xi dt,
\eeno
where
\beno\begin{aligned}
\tilde{s}_{\mu,+}(\xi,\eta)&=\langle\xi\rangle^{-N_0}\langle\eta\rangle^{-N_0}\bigl(\langle\xi\rangle^{2N_0}s_{\mu,+}(\xi,\eta)
-\langle\eta\rangle^{2N_0}s_{-\mu,+}(\eta,\xi)\bigr).
\end{aligned}\eeno

Thanks to \eqref{bound of s},  we have
\beq\label{I 2}
\text{supp}\,\tilde{s}_{\mu,+}\subset\mathbb{S}\eqdefa\{(\xi,\eta)\in\R^2\,|\, |\xi-\eta|\leq 2^{-5}\max\{|\xi|,|\eta|\}\}.
\eeq
and
\beq\label{bound of s+}
|\tilde{s}_{\mu,+}(\xi,\eta)|\lesssim|\xi-\eta|\cdot1_{\mathbb{S}}(\xi,\eta).
\eeq

For simplicity, we denote
\beno
\mathfrak{S}_{\mu}(\xi,\eta)\eqdefa\tilde{s}_{\mu,+}(\xi,\eta)\widehat{V^{\mu}}(\xi-\eta)\cdot
\langle\eta\rangle^{N_0}\widehat{V^+}(\eta)\cdot\langle\xi\rangle^{N_0}\widehat{V^-}(-\xi).
\eeno
To estimate Re$(I)$, we rewrite $\mathfrak{S}_{\mu}(\xi,\eta)$ in terms of profiles $f^{\pm},g^{\pm}$ as follows
\beq\label{I 4}
\mathfrak{S}_{\mu}(\xi,\eta)=e^{it\Phi_{\mu,+}(\xi,\eta)}\tilde{s}_{\mu,+}(\xi,\eta)\widehat{f^{\mu}}(\xi-\eta)\cdot
\widehat{g^+}(\eta)\cdot\widehat{g^-}(-\xi).
\eeq
Thanks to Lemma \ref{phase lem 1},
we have
\beno
\Phi_{-,+}(\xi,\eta)=-\Phi_{+,+}(\eta,\xi).
\eeno
 Then the estimate of $\int_0^t\int_{\R^2}\mathfrak{S}_{-}(\xi,\eta)d\eta d\xi dt$ is similar to $\int_0^t\int_{\R^2}\mathfrak{S}_{+}(\xi,\eta)d\eta d\xi dt$. We only derive the estimate for $\int_0^t\int_{\R^2}\mathfrak{S}_{+}(\xi,\eta)d\eta d\xi dt$.

By the expression of $\Phi_{+,+}(\xi,\eta)$, we divide $\mathfrak{S}_{+}(\xi,\eta)$ into two parts as follows
\beno
\mathfrak{S}_{+}(\xi,\eta)=\underbrace{\mathfrak{S}_{+}(\xi,\eta)\cdot1_{(\xi-\eta)\cdot\eta>0}}_{\mathfrak{S}_{+}^{>0}(\xi,\eta)}
+\underbrace{\mathfrak{S}_{+}(\xi,\eta)\cdot1_{(\xi-\eta)\cdot\eta<0}}_{\mathfrak{S}_{+}^{<0}(\xi,\eta)}.
\eeno

\medskip

{\it Step 3.1. Estimate for the integral of $\mathfrak{S}_{+}^{>0}(\xi,\eta)$.} For $(\xi-\eta)\cdot\eta>0$, Lemma \ref{phase lem 1} shows that
\beno
\Phi_{+,+}(\xi,\eta)=3|\xi||\xi-\eta||\eta|.
\eeno
Now we split the integral of $\mathfrak{S}_{+}^{>0}(\xi,\eta)$ into two parts which  correspond to high and low frequencies respectively, i.e.,
\beno
|\xi|\geq 2^{-D-1}\quad\text{and}\quad |\xi|\leq 2^{-D},
\eeno
where $D\in\N$ is a large number that will be determined later on.

\smallskip

{\it (1). For $|\xi|\geq 2^{-D-1}$,} using \eqref{relation between xi and eta} and \eqref{bound of s+}, we have
\beq\label{I 5}
\Bigl|\f{\tilde{s}_{+,+}(\xi,\eta)}{i\Phi_{+,+}(\xi,\eta)}\Bigr|\lesssim\f{1}{|\xi||\eta|}\sim\f{1}{|\xi|^2}.
\eeq
Using \eqref{I 4}, we have
\beno
{\mathfrak{S}}_+^{> 0}(\xi,\eta)=\f{\tilde{s}_{+,+}(\xi,\eta)}{i\Phi_{+,+}(\xi,\eta)}\f{d}{dt}e^{it\Phi_{\mu,+}(\xi,\eta)}\widehat{f^+}(\xi-\eta)\cdot
\widehat{g^+}(\eta)\cdot\widehat{g^-}(-\xi)\cdot1_{(\xi-\eta)\cdot\eta>0}.
\eeno
Integrating by parts with respect to  t, we have
\beno\begin{aligned}
&\int_0^t\int_{\R^2}{\mathfrak{S}}_+^{>0}(\xi,\eta)\varphi_{\geq -D}(|\xi|)d\eta d\xi dt\\
&
=\underbrace{\int_{\R^2}\f{\tilde{s}_{+,+}(\xi,\eta)}{i\Phi_{+,+}(\xi,\eta)}e^{i\tau\Phi_{\mu,+}(\xi,\eta)}\widehat{f^+}(\tau,\xi-\eta)\cdot
\widehat{g^+}(\tau,\eta)\cdot\widehat{g^-}(\tau,-\xi)\cdot\varphi_{\geq -D}(|\xi|)\cdot1_{(\xi-\eta)\cdot\eta>0}d\eta d\xi \bigl|_{\tau=0}^t}_{A_1}\\
&\quad-\underbrace{\int_0^t\int_{\R^2} \f{\tilde{s}_{+,+}(\xi,\eta)}{i\Phi_{+,+}(\xi,\eta)}e^{i\tau\Phi_{\mu,+}(\xi,\eta)}\p_t\Bigl(\widehat{f^+}(\xi-\eta)\cdot
\widehat{g^+}(\eta)\cdot\widehat{g^-}(-\xi)\Bigr)\cdot\varphi_{\geq -D}(|\xi|)1_{(\xi-\eta)\cdot\eta>0}d\eta d\xi d\tau}_{A_2}.
\end{aligned}\eeno

Thanks to \eqref{relation between xi and eta} and \eqref{I 5},  we have
\beno\begin{aligned}
|A_1|&\lesssim \sum_{\tau\in\{0,t\}}\int_{\mathbb{S}}\f{1}{|\eta|^2}|\widehat{f}(\tau,\xi-\eta)|\cdot
|\widehat{g}(\tau,\eta)|\cdot|\widehat{g}(\tau,-\xi)|\varphi_{\geq-D}(|\xi|)d\eta d\xi \\
&\lesssim\sum_{\tau\in\{0,t\}}\|\widehat{f}(\tau,\xi)\|_{L^2}\|\f{1}{|\xi|^2}\widehat{g}(\tau,\xi)\|_{L^1(|\xi|\geq2^{-D-2})}\|\widehat{g}(\tau,\xi)\|_{L^2}\\
&\lesssim 2^{\f{3}{2}D}\bigl(\|f(0)\|_{L^2}\|g(0)\|_{L^2}^2+\|f(t)\|_{L^2}\|g(t)\|_{L^2}^2\bigr),
\end{aligned}\eeno
where we used the following formula in the last inequality
\beq\label{ineq 1}
\|\f{1}{|\xi|^r}\widehat{g}(\tau,\xi)\|_{L^1(|\xi|\geq 2^{-D-2})}\lesssim 2^{(r-\f12) D}\|\widehat{g}(\tau,\xi)\|_{L^2},\quad\hbox{for any } r>\f12.
\eeq
Whereas using \eqref{relation between xi and eta} and \eqref{I 5}, we have
\beno\begin{aligned}
|A_2|&\lesssim t\sup_{(0,t)}\int_{\mathbb{S}}\f{1}{|\xi|^2}\Bigl(|\p_t\widehat{f^+}(\xi-\eta)|\cdot
|\widehat{g^+}(\eta)|\cdot|\widehat{g^-}(-\xi)|+|\widehat{f^+}(\xi-\eta)|\cdot
|\p_t\bigl(\widehat{g^+}(\eta)\widehat{g^-}(-\xi)\bigr)|\Bigr)\varphi_{\geq -D}(|\xi|)d\eta d\xi\\
&\lesssim t\sup_{(0,t)}\bigl(\|\f{1}{|\xi|}\p_t\widehat{f}(\xi)\|_{L^2}\|\f{1}{|\xi|}\widehat{g}(\xi)\|_{L^1(|\xi|\geq 2^{-D-2})}\|\widehat{g}\|_{L^2}
+\|\widehat{f}\|_{L^2}\|\f{1}{|\xi|}\p_t\widehat{g}(\xi)\|_{L^2}\|\f{1}{|\xi|}\widehat{g}\|_{L^1(|\xi|\geq 2^{-D-2})}\bigr)\\
&\lesssim 2^{\f{1}{2}D}t\sup_{(0,t)}\bigl(\|\f{1}{|\p_x|}\p_tf\|_{L^2}\|g\|_{L^2}^2
+\|f\|_{L^2}\|\f{1}{|\p_x|}\p_tg\|_{L^2}\|g\|_{L^2}\bigr),
\end{aligned}\eeno
where we used \eqref{ineq 1} in the last inequality.

Thanks to  \eqref{Ansatz}, \eqref{equivalent energy functional 2} and \eqref{estimate for p_t f}, noticing that $g=\langle\p_x\rangle^{N_0}f$, we have
\beq\label{estimate for I>0 in high fre}
\bigl|\int_0^t\int_{\R^2}{\mathfrak{S}}_+^{>0}(\xi,\eta)\varphi_{\geq-D}(|\xi|)d\eta d\xi dt\bigl|
\lesssim2^{\f{3}{2}D}\e^3+2^{\f{1}{2}D}t\e^4.
\eeq

\smallskip

{\it (2). For $|\xi|< 2^{-D}$,} we have $|\eta|< 2^{-D+1}$ for any $(\xi,\eta)\in\mathbb{S}$. Using \eqref{bound of s+} and \eqref{I 4}, we have
\beno\begin{aligned}
&|\int_0^t\int_{\R^2}{\mathfrak{S}}_+^{>0}(\xi,\eta)\varphi_{\leq-D-1}(|\xi|)d\eta d\xi dt|\\
&
\lesssim t\sup_{(0,t)}\int_{-2^{-D}}^{2^{-D}}\int_{-2^{-D+1}}^{2^{-D+1}}|\xi-\eta||\widehat{f}(\xi-\eta)|\cdot
|\widehat{g}(\eta)|\cdot|\widehat{g}(\xi)|d\eta d\xi\\
&\lesssim t\sup_{(0,t)}\|\xi\widehat{f}(\xi)\|_{L^2}\|\widehat{g}(\xi)\|_{L^1(|\xi|<2^{-D+1})}\|\widehat{g}\|_{L^2}\lesssim 2^{-\f12D}t\sup_{(0,t)}\|\p_xf\|_{L^2}\|g\|_{L^2}^2,
\end{aligned}
\eeno
which along with \eqref{Ansatz} and \eqref{equivalent energy functional 2} implies that
\beq\label{estimate for I>0 in low fre}
\bigl|\int_0^t\int_{\R^2}{\mathfrak{S}}_+^{>0}(\xi,\eta)\varphi_{\leq-D-1}(|\xi|)d\eta d\xi dt\bigl|
\lesssim2^{-\f{1}{2}D}t\e^3.
\eeq

\smallskip

Taking $D=[\log_2\e^{-\f23}]$ (i.e, $2^{D}\sim \e^{-\f23}$) in \eqref{estimate for I>0 in high fre} and \eqref{estimate for I>0 in low fre}, we have
\beq\label{estimate for I>0}
\bigl|\int_0^t\int_{\R^2}{\mathfrak{S}}_+^{>0}(\xi,\eta)d\eta d\xi dt\bigl|
\lesssim\e^2+t\e^{\f{4}{3}}\cdot\e^2.
\eeq
Here the notation $[x]$ means the largest integer that does not exceed $x$.

\medskip

{\it Step 3.2. Estimate for the integral of $\mathfrak{S}_{+}^{<0}(\xi,\eta)$.} For $(\xi,\eta)\in\mathbb{S}$ with $(\xi-\eta)\cdot\eta<0$, there holds
\beq\label{I 1}
\xi\cdot\eta>0,\quad\f{31}{32}|\eta|\leq|\xi|<|\eta|.
\eeq
Lemma \ref{phase lem 1} yields
\beq\label{I 6}
\Phi_{+,+}(\xi,\eta)=-\f{1}{2}|\xi-\eta|\bigl(3|\xi|^2+3|\eta|^2+|\xi-\eta|^2-4\bigr)=-2|\xi-\eta|\phi_{+,+}(\xi,\eta),
\eeq
where
\beno
\phi_{+,+}(\xi,\eta)=\xi^2+\eta^2-\f12\xi\cdot\eta-1.
\eeno

Now, we split the frequencies space into three parts as follows:

\smallskip

{\it (1). For high frequencies $|\eta|\geq 2^5$ and low frequencies $|\eta|\leq\f{1}{2}$},
 using \eqref{bound of s}, we have
\beno\begin{aligned}
&|\Phi_{+,+}(\xi,\eta)|\sim|\xi-\eta||\eta|^2\quad\text{and}\quad |\f{\widetilde{s}_{+,+}(\xi,\eta)}{i\Phi_{+,+}(\xi,\eta)}|\lesssim\f{1}{|\eta|^2},
\quad\text{for}\quad |\eta|\geq 2^5,\\
&
|\Phi_{+,+}(\xi,\eta)|\sim|\xi-\eta|\quad\text{and}\quad |\f{\widetilde{s}_{+,+}(\xi,\eta)}{i\Phi_{+,+}(\xi,\eta)}|\lesssim1,
\quad\text{for}\quad |\eta|\leq\f12.
\end{aligned}\eeno

Similarly as in the derivation of  \eqref{estimate for I>0 in high fre}, integrating by parts with respect to t, we have
\beq\label{estimate for I<0 in low and high fre}
|\int_0^t\int_{\mathbb{S}}{\mathfrak{S}}_+^{< 0}(\xi,\eta)\cdot\bigl(\varphi_{\leq -2}(|\eta|)+\varphi_{\geq 6}(|\eta|)\bigr)d\eta d\xi dt|
\lesssim\e^3+t\e^4.
\eeq

\smallskip

{\it (2). For moderate frequencies with large modulation of $\phi_{+,+}(\xi,\eta)$,} i.e.,
\beno
|\eta|\in[\f14,\,2^6]\quad\text{and}\quad |\phi_{+,+}(\xi,\eta)|\geq 2^{-D-1},
\eeno
using \eqref{bound of s+} and \eqref{I 6}, we have
\beno
|\f{\widetilde{s}_{+,+}(\xi,\eta)}{i\Phi_{+,+}(\xi,\eta)}|\lesssim\f{1}{|\phi_{+,+}(\xi,\eta)|}\lesssim2^{D}.
\eeno
Following a similar argument as for \eqref{estimate for I>0 in high fre}, integrating by parts with respect to t, noticing that the integral set is bounded, we have
\beq\label{estimate for I<0 in med fre and high phase}
|\int_0^t\int_{\R^2}{\mathfrak{S}}_+^{< 0}(\xi,\eta)\cdot\varphi_{[-1,5]}(|\eta|)\varphi_{\geq -D}(\phi_{+,+}(\xi,\eta))d\eta d\xi dt|
\lesssim2^D\e^3+2^Dt\e^4.
\eeq

\smallskip

{\it (3). For moderate frequencies with small modulation of  $\phi_{+,+}(\xi,\eta)$,} i.e.,
\beno
|\eta|\in[\f14,\,2^6]\quad\text{and}\quad |\phi_{+,+}(\xi,\eta)|\leq 2^{-D},
\eeno
using \eqref{I 1}, we only consider the integral over the set
\beno
\mathbb{S}_+=\{(\xi,\eta)\in\R^2\,|\, \eta\in[\f14,2^6],\,\f{31}{32}\eta\leq\xi<\eta\},
\eeno
since the integral over the set
\beno
\mathbb{S}_-=\{(\xi,\eta)\in\R^2\,|\, \eta\in[-2^6,-\f14],\,\eta<\xi\leq\f{31}{32}\eta\},
\eeno
could be estimated in a similar way.

Introducing the coordinate transformation on $\mathbb{S}_+$ as follows
\beno\begin{aligned}
\Psi:\,\mathbb{S}_+&\rightarrow\widetilde{\mathbb{S}}_+\subset\R^2,\\
(\xi,\eta)&\mapsto(\tilde{\xi},\eta)=(\phi_{+,+}(\xi,\eta),\eta),
\end{aligned}\eeno
we have
\beq\label{I 7}
\det\bigl(\f{\p\Psi(\xi,\eta)}{\p(\xi,\eta)}\bigr)=\f{\p\phi_{+,+}(\xi,\eta)}{\p\xi}=2\xi-\f12\eta\sim\eta\sim 1,
\eeq
which implies that $\Psi$ is invertible and we denote by
\beno
(\xi,\eta)=\Psi^{-1}(\tilde\xi,\eta).
\eeno

Changing variables $(\xi,\eta)$ to $(\tilde\xi,\eta)$, using \eqref{I 7}, we have
\beno\begin{aligned}
&|\int_0^t\int_{\mathbb{S}_+}{\mathfrak{S}}_+^{< 0}(\xi,\eta)\cdot\varphi_{[-1,5]}(|\eta|)\varphi_{\leq -D-1}(\phi_{+,+}(\xi,\eta))d\eta d\xi dt|\\
&\lesssim t\sup_{(0,t)}\int_{\f14}^{2^6}\int_{-2^{-D}}^{2^{-D}}\bigl(|\widehat{f}(\xi-\eta)|\cdot|\widehat{g}(\xi)|\cdot1_{\mathbb{S}_+}(\xi,\eta)\bigr)|_{(\xi,\eta)=\Psi^{-1}(\tilde{\xi},\eta)}
\cdot
|\widehat{g}(\eta)|d\tilde\xi d\eta\\
&\lesssim 2^{-\f{1}{2}D}t\sup_{(0,t)}\|\widehat{g}(\eta)\|_{L^2}
\Bigl(\int_{\f14}^{2^6}\int_{-2^{-D}}^{2^{-D}}\bigl(|\widehat{f}(\xi-\eta)|^2\cdot|\widehat{g}(\xi)|^2\cdot1_{\mathbb{S}_+}(\xi,\eta)\bigr)
|_{(\xi,\eta)=\Psi^{-1}(\tilde{\xi},\eta)}d\tilde\xi d\eta\Bigr)^{\f12}.
\end{aligned}\eeno
Then changing variables $(\tilde\xi,\eta)$ to $(\xi,\eta)$, using \eqref{I 7},  we obtain
\beno
|\int_0^t\int_{\mathbb{S}_+}{\mathfrak{S}}_+^{< 0}(\xi,\eta)\cdot\varphi_{[-1,5]}(|\eta|)\varphi_{\leq -D-1}(\phi_{+,+}(\xi,\eta))d\eta d\xi dt|\lesssim2^{-\f{1}{2}D}t\sup_{(0,t)}\|f\|_{L^2}\|g\|_{L^2}^2,
\eeno
which along with \eqref{Ansatz} and \eqref{equivalent energy functional 2} implies that
\beq\label{estimate for I<0 in med fre and low phase}
|\int_0^t\int_{\mathbb{S}_+}{\mathfrak{S}}_+^{< 0}(\xi,\eta)\cdot\varphi_{[-1,5]}(|\eta|)\varphi_{\leq -D-1}(\phi_{+,+}(\xi,\eta))d\eta d\xi dt|
\lesssim 2^{-\f{1}{2}D}t\e^3.
\eeq
The same estimate holds for $|\int_0^t\int_{\mathbb{S}_-}{\mathfrak{S}}_+^{< 0}(\xi,\eta)\cdot\varphi_{[-1,5]}(|\eta|)\varphi_{\leq -D-1}(\phi_{+,+}(\xi,\eta))d\eta d\xi dt|$.

Taking $D=[\log_2 \e^{-\f{2}{3}}]$ (i.e., $2^D\sim\e^{-\f{2}{3}}$) in \eqref{estimate for I<0 in med fre and high phase} and \eqref{estimate for I<0 in med fre and low phase},  we obtain
\beq\label{estimate for I<0 in med fre}
|\int_0^t\int_{\R^2}{\mathfrak{S}}_+^{< 0}(\xi,\eta)\cdot\varphi_{[-1,5]}(|\eta|)d\eta d\xi dt|
\lesssim\e^2+t\e^{\f43}\cdot \e^2.
\eeq

\smallskip

Thanks to \eqref{estimate for I<0 in low and high fre} and \eqref{estimate for I<0 in med fre}, we obtain
\beq\label{estimate for I<0}
|\int_0^t\int_{\R^2}{\mathfrak{S}}_+^{< 0}(\xi,\eta)d\eta d\xi dt|
\lesssim\e^2+t\e^{\f43}\cdot\e^2.
\eeq

\medskip

{\it Step 3.3. Estimate for Re$(I)$.} Combining \eqref{estimate for I>0} and \eqref{estimate for I<0}, we get
\beno
|\int_0^t\int_{\R^2}{\mathfrak{S}}_+(\xi,\eta)d\eta d\xi dt|
\lesssim\e^2+t\e^{\f43}\cdot\e^2.
\eeno
The same estimate holds for $\int_0^t\int_{\R^2}{\mathfrak{S}}_-(\xi,\eta)d\eta d\xi dt$.
Then we obtain
\beno
|\text{Re} (I)|\leq|\int_0^t\int_{\R^2}{\mathfrak{S}}_+(\xi,\eta)d\eta d\xi dt|+|\int_0^t\int_{\R^2}{\mathfrak{S}}_-(\xi,\eta)d\eta d\xi dt|\lesssim\e^2+t\e^{\f43}\cdot\e^2,
\eeno
This is exactly \eqref{estimate for I}.

\medskip

{\bf Step 4. Estimate for Re$(II)$.} In this step, we will prove
\beq\label{estimate for II}
|\text{Re} (II)|\lesssim\e^2+t\e^{\f43}\cdot\e^2,
\eeq

By the expression of $II$, denoting by
\beno\begin{aligned}
\mathfrak{Q}_{\mu,-}(\xi,\eta)\eqdefa\tilde{q}_{\mu,-}(\xi,\eta)\cdot\widehat{V^\mu}(\xi-\eta)
\cdot\langle\eta\rangle^{N_0}\widehat{V^-}(\eta)
\cdot\langle\xi\rangle^{N_0}\overline{\widehat{V^+}(\xi)},
\end{aligned}\eeno
with
\beno
\tilde{q}_{\mu,-}(\xi,\eta)=\langle\eta\rangle^{-N_0}\langle\xi\rangle^{N_0}q_{\mu,-}(\xi,\eta),
\eeno
we have
\beno
II=\sum_{\mu\in\{+,-\}}\int_0^t\int_{\R^2}\mathfrak{Q}_{\mu,-}(\xi,\eta)d\eta d\xi dt.
\eeno

{\it Step 4.1. Estimate for $\sum_{\mu\in\{+,-\}}\int_0^t\int_{\R^2}\mathfrak{Q}_{\mu,-}(\xi,\eta)d\eta d\xi dt$.} Now, we rewrite $\mathfrak{Q}_{\mu,-}(\xi,\eta)$ in terms of profiles  as follows
\beno
 \mathfrak{Q}_{\mu,-}(\xi,\eta)=e^{it\Phi_{\mu,-}(\xi,\eta)}\widetilde{q}_{\mu,-}(\xi,\eta)
 \widehat{f^\mu}(\xi-\eta)\widehat{g^-}(\eta)\widehat{g^-}(-\xi).
\eeno

Thanks to \eqref{bound of q}, we have
\beq\label{I 17}
|\widetilde{q}_{\mu,-}(\xi,\eta)|\lesssim|\xi|\cdot \varphi_{\leq 5}(|\eta|)\cdot\varphi_{\leq -6}\Bigl(\f{|\xi-\eta|}{|\eta|}\Bigr).
\eeq
Lemma \ref{phase lem 1} and the fact $\xi\cdot\eta>0$ (in \eqref{relation between xi and eta}) yield
\beno
\Phi_{+,-}(\xi,\eta)=
\left\{\begin{aligned}
&\f12|\eta|\bigl(3|\xi-\eta|^2+3|\xi|^2+|\eta|^2-4\bigr)\quad\text{if}\quad |\xi|>|\eta|,\\
&\f12|\xi|\bigl(3|\xi-\eta|^2+3|\eta|^2+|\xi|^2-4\bigr),
\quad\text{if}\quad |\xi|<|\eta|.
\end{aligned}\right.
\eeno
Then there hold
\beq\label{I 15}
\Phi_{-,-}(\xi,\eta)=\Phi_{+,-}(\eta,\xi), \quad\text{and}\quad\Phi_{+,-}(\xi,\eta)=\Phi_{+,-}(\eta,\xi).
\eeq
Due to \eqref{I 15}, we only need to estimate the integral of $\mathfrak{Q}_{+,-}(\xi,\eta)$ over the set with restriction $|\xi|>|\eta|$.

{\it For $|\xi|>|\eta|$}, we have
\beno
\Phi_{+,-}(\xi,\eta)=2|\eta|\phi_{+,-}(\xi,\eta)\quad\text{with}\quad \phi_{+,-}(\xi,\eta)=\eta^2-\f32\xi\eta+\f32\xi^2-1.
\eeno

A similar argument as in Step 3.2 leads to
\beq\label{estimate for II- in low fre}
|\int_0^t\int_{\R^2}\mathfrak{Q}_{+,-}(\xi,\eta)1_{|\xi|>|\eta|}\varphi_{\leq -2}(|\eta|)d\eta d\xi dt|\lesssim\e^3+t\e^4,
\eeq
\beq\label{estimate for II- in med fre and high phase}
|\int_0^t\int_{\R^2}\mathfrak{Q}_{+,-}(\xi,\eta)1_{|\xi|>|\eta|}\varphi_{[-1,5]}(|\eta|)\varphi_{\geq-D}(\phi_{+,-}(\xi,\eta))d\eta d\xi dt|\lesssim 2^D\e^3+2^Dt\e^4,
\eeq
\beq\label{estimate for II- in med fre and low phase}
|\int_0^t\int_{\R^2}\mathfrak{Q}_{+,-}(\xi,\eta)1_{|\xi|>|\eta|}\varphi_{[-1,5]}(|\eta|)\varphi_{\leq-D-1}(\phi_{+,-}(\xi,\eta))d\eta d\xi dt|\lesssim 2^{-\f12D}t\e^3,
\eeq
where $D\in\N$ need to be determined later on. Here we only verify \eqref{estimate for II- in med fre and low phase}. Indeed, since $|\xi-\eta|\leq 2^{-5}|\eta|$, we only consider the integral over set
\beno
\mathbb{S}_{>}=\{(\xi,\eta)\in\R^2\,|\,\eta\in[\f{1}{4},2^6],\,\eta<\xi\leq\f{33}{32}\eta\},
\eeno
since the same estimate will also hold for the integral over set
\beno
\mathbb{S}_{<}=\{(\xi,\eta)\in\R^2\,|\,\eta\in[-2^6,-\f{1}{4}],\,\f{33}{32}\eta\leq\xi<\eta\}.
\eeno
Introducing the coordinates transformation
on $\mathbb{S}_{>}$ as follows
\beno\begin{aligned}
\Psi_>:\,\mathbb{S}_{>}&\rightarrow \widetilde{\mathbb{S}}_{>}\subset\R^2,\\
(\xi,\eta)&\mapsto(\xi,\tilde{\eta})=(\xi,\phi_{+,-}(\xi,\eta)),
\end{aligned}\eeno
we have
\beq\label{I 8}
\det\bigl(\f{\p\Psi_>(\xi,\eta)}{\p(\xi,\eta)}\bigr)=\f{\p\phi_{+,-}(\eta)}{\p\eta}=2\eta-\f32\xi\sim\eta\sim 1,
\eeq
which implies that $\Psi_>$ is invertible. With \eqref{I 8}, following the similar derivation of \eqref{estimate for I<0 in med fre and low phase}, we obtain \eqref{estimate for II- in med fre and low phase}.

Taking $D=[\log_2 \e^{-\f23}]$ (i.e., $2^D\sim\e^{-\f23}$) in \eqref{estimate for II- in med fre and high phase} and \eqref{estimate for II- in med fre and low phase}, using \eqref{estimate for II- in low fre}, \eqref{estimate for II- in med fre and high phase} and \eqref{estimate for II- in med fre and low phase}, we get
\beno
|\int_0^t\int_{\R^2}\mathfrak{Q}_{+,-}(\xi,\eta)\cdot1_{(|\xi|>|\eta|)}d\eta d\xi dt|\lesssim\e^2+t\e^{\f43}\cdot \e^2.
\eeno
The same estimate hold for $\int_0^t\int_{\R^2}\mathfrak{Q}_{+,-}(\xi,\eta)\cdot1_{(|\xi|<|\eta|)}d\eta d\xi dt$ and $\int_0^t\int_{\R^2}\mathfrak{Q}_{-,-}(\xi,\eta)d\eta d\xi dt$. We finally obtain
\beno
|\sum_{\mu\in\{+,-\}}\int_0^t\int_{\R^2}\mathfrak{Q}_{\mu,-}(\xi,\eta)d\eta d\xi dt|\lesssim\e^2+t\e^{\f43}\cdot \e^2.
\eeno
This is \eqref{estimate for II}.

\medskip
{\bf Step 5. Estimate for Re$(III)$.} Firstly, we rewrite $III$ in terms of the profiles as follows
 \beno
 III=\sum_{\mu,\nu\in\{+,-\}}\int_0^t\int_{\R^2}e^{it\Phi_{\mu,\nu}(\xi,\eta)}\widetilde{r}_{\mu,\nu}(\xi,\eta)
 \widehat{f^\mu}(\xi-\eta)\widehat{g^\nu}(\eta)\widehat{g^-}(-\xi)d\eta d\xi dt,
\eeno
where
 \beno
 \widetilde{r}_{\mu,\nu}(\xi,\eta)=\langle\xi\rangle^{N_0}\langle\eta\rangle^{-N_0}r_{\mu,\nu}(\xi,\eta).
 \eeno

Thanks to \eqref{bound of r}, we have
\beno
|\widetilde{r}_{\mu,\nu}(\xi,\eta)|\lesssim|\xi-\eta|\cdot\varphi_{\geq6}(|\eta|)\cdot\varphi_{\leq-6}\Bigl(\f{|\xi-\eta|}{|\eta|}\Bigr),
\eeno
where we used $|\xi|\sim|\eta|$ which is stated in \eqref{relation between xi and eta}.
After similar derivations as  in Step 3 and Step 4, we obtain
\beq\label{estimate for III}
\bigl|\text{Re}(III)\bigl|
\lesssim\e^2+t\e^{\f43}\cdot\e^2.
\eeq

\medskip

{\bf Step 6. Estimate for Re$(IV)$.} In this step, we shall prove
\beq\label{estimate for IV}
|\text{Re}(IV)|\lesssim\e^2+t\e^{\f43}\cdot\e^2.
\eeq

First, denoting by
\beno
\mathfrak{M}_{\mu,\nu}(\xi,\eta)\eqdefa\langle\xi\rangle^{2N_0}m_{\mu,\nu}(\xi,\eta)\widehat{V^\mu}(\xi-\eta)\widehat{V^\nu}(\eta)
\overline{\widehat{V^+}(\xi)},
\eeno
we have
\beno
IV=\sum_{\mu,\nu\in\{+,-\}}\int_0^t\int_{\R^2}\mathfrak{M}_{\mu,\nu}(\xi,\eta)d\eta d\xi dt.
\eeno
Thanks to \eqref{bound of m}, we have
\beno
\text{supp}\,\mathfrak{M}_{\mu,\nu}\subset\{(\xi,\eta)\in\R^2\,|\,2^{-7}|\eta|\leq|\xi-\eta|\leq2^8|\eta|\},
\eeno
i.e., for any $(\xi,\eta)\in$supp$\,\mathfrak{M}_{\mu,\nu}$,
\beq\label{Relation between xi and eta in M}
|\xi-\eta|\sim|\eta|, \quad |\xi|\lesssim|\eta|.
\eeq
By the definitions of the profiles, we rewrite $\mathfrak{M}_{\mu,\nu}(\xi,\eta)$ to
\beno
\mathfrak{M}_{\mu,\nu}(\xi,\eta)=e^{it\Phi_{\mu,\nu}(\xi,\eta)}\widetilde{m}_{\mu,\nu}(\xi,\eta)\widehat{f^\mu}(\xi-\eta)\widehat{g^\nu}(\eta)
\overline{\widehat{g^+}(\xi)},
\eeno
where
\beno
\widetilde{m}_{\mu,\nu}(\xi,\eta)=\langle\xi\rangle^{N_0}\langle\eta\rangle^{-N_0}m_{\mu,\nu}(\xi,\eta).
\eeno

Due to \eqref{bound of m} and \eqref{Relation between xi and eta in M}, we have
\beq\label{I 19}
|\widetilde{m}_{\mu,\nu}(\xi,\eta)|\lesssim|\xi|.
\eeq

Thanks to Lemma \ref{phase lem 1}, we shall only derive the estimates for the integral of $\mathfrak{M}_{+,+}(\xi,\eta)$.
\medskip

{\it Step 6.1. The integral over the set with $(\xi-\eta)\cdot\eta>0$.} For $(\xi-\eta)\cdot\eta>0$, Lemma \ref{phase lem 1} yields
\beno
\Phi_{+,+}(\xi,\eta)=3|\xi||\xi-\eta||\eta|,
\eeno
which along with \eqref{I 19} shows
\beno
\bigl|\f{\widetilde{m}_{+,+}(\xi,\eta)}{\Phi_{+,+}(\xi,\eta)}\bigr|\lesssim\f{1}{|\xi-\eta||\eta|}.
\eeno
Similarly as the derivation of \eqref{estimate for I>0}, using \eqref{Relation between xi and eta in M}, we have
\beq\label{estimate for IV+>0}
|\int_0^t\int_{\R^2}\mathfrak{M}_{+,+}(\xi,\eta)\cdot 1_{(\xi-\eta)\cdot\eta>0}d\eta d\xi|\lesssim\e^2+t\e^{\f43}\cdot\e^2.
\eeq

\smallskip

{\it Step 6.2. The integral over the set with $(\xi-\eta)\cdot\eta<0$.} For $(\xi-\eta)\cdot\eta<0$,  Lemma \ref{phase lem 1} shows
\beq\label{I 20}\begin{aligned}
&\Phi_{+,+}(\xi,\eta)=-\f12\min\{|\xi-\eta|,|\eta|\}\phi_{+,+}(\xi,\eta)\\
\text{with}\quad
&\phi_{+,+}(\xi,\eta)=3|\xi|^2+3\max\{|\xi-\eta|^2,|\eta|^2\}+\min\{|\xi-\eta|^2,|\eta|^2\}-4.
\end{aligned}\eeq

Now we split the frequency space into three parts as follows:

{\it (1). For high frequencies $|\eta|>4$ and low frequency $|\eta|<2^{-9}$ }, using the fact that $|\xi-\eta|\in[2^{-7}|\eta|,\,2^8|\eta|]$ and \eqref{I 19}, we have
\beno\begin{aligned}
&|\Phi_{+,+}(\xi,\eta)|\sim |\eta|^3\quad\text{and}\quad \bigl|\f{\widetilde{m}_{+,+}(\xi,\eta)}{\Phi_{+,+}(\xi,\eta)}\bigr|\lesssim\f{1}{|\eta|^2},\quad\text{if}\quad |\eta|>4,\\
&|\Phi_{+,+}(\xi,\eta)|\sim|\eta|\quad\text{and}\quad \bigl|\f{\widetilde{m}_{+,+}(\xi,\eta)}{\Phi_{+,+}(\xi,\eta)}\bigr|\lesssim1,\quad\text{if}\quad |\eta|<2^{-9}.
\end{aligned}\eeno

Following similar derivation as \eqref{estimate for I>0 in high fre}, using \eqref{Relation between xi and eta in M}, we have
\beq\label{estimate for IV+<0 in high and low fre}
|\int_0^t\int_{\R^2}\mathfrak{M}_{+,+}(\xi,\eta)\cdot 1_{(\xi-\eta)\cdot\eta<0}\cdot\bigl(\varphi_{\leq -10}(|\eta|)+\varphi_{\geq3}(|\eta|)\bigr)d\eta d\xi dt|\lesssim\e^3+t\e^4.
\eeq

\smallskip

{\it (2). For moderate frequencies with large modulation of $\phi_{+,+}(\xi,\eta)$,} i.e.,
\beno
 |\eta|\in[2^{-10},8]\quad\text{and}\quad |\phi_{+,+}(\xi,\eta)|\geq 2^{-D-1},
\eeno
following similar derivation as \eqref{estimate for I<0 in med fre and high phase}, we have
\beq\label{estimate for IV+<0 in med fre and high phase 1}
|\int_0^t\int_{\mathbb{S}'_1}\mathfrak{M}_{+,+}(\xi,\eta)\cdot 1_{(\xi-\eta)\cdot\eta<0}\cdot\varphi_{[-9,2]}(|\eta|)\cdot\varphi_{\geq-D}(\phi_{+,+}(\xi,\eta))d\eta d\xi dt|\lesssim2^D\e^3+2^Dt\e^4.
\eeq

\smallskip

{\it (3). For moderate frequencies with small modulation of $\phi_{+,+}(\xi,\eta)$,} i.e.,
\beno
 |\eta|\in[2^{-10},8]\quad\text{and}\quad |\phi_{+,+}(\xi,\eta)|\leq 2^{-D},
\eeno
we divide the integral set
\beno
\mathbb{S}'\eqdefa\{(\xi,\eta)\in\R^2\,|\, (\xi-\eta)\cdot\eta<0,\,|\eta|\in[2^{-10},8],\,\,|\xi-\eta|\in[2^{-7}|\eta|,2^8|\eta|]\}
\eeno
into two sets as follows
\beno
\mathbb{S}'=\underbrace{\{(\xi,\eta)\in\mathbb{S}'\,|\, \xi\cdot\eta>0,\,\,|\eta|>|\xi|\}}_{\mathbb{S}'_1}\cup
\underbrace{\{(\xi,\eta)\in\mathbb{S}'\,|\, \xi\cdot\eta<0\}}_{\mathbb{S}'_2}.
\eeno

{\it (i). When $(\xi,\eta)\in\mathbb{S}'_1$,} we have $|\xi-\eta|=|\eta|-|\xi|$ .
Due to \eqref{I 20}, there holds
\beno
\phi_{+,+}(\xi,\eta)=4\xi^2+4\eta^2-2\xi\cdot\eta-4.
\eeno

Following similar derivation as  \eqref{estimate for I<0 in med fre and low phase}, we have
\beq\label{estimate for IV+<0 in med fre and low phase 1}
|\int_0^t\int_{\mathbb{S}'_1}\mathfrak{M}_{+,+}(\xi,\eta)\cdot 1_{(\xi-\eta)\cdot\eta<0}\cdot\varphi_{[-9,2]}(|\eta|)\cdot\varphi_{\leq-D-1}(\phi_{+,+}(\xi,\eta))d\eta d\xi dt|\lesssim2^{-\f12D}t\e^3,
\eeq
Here we only need to verify \eqref{estimate for IV+<0 in med fre and low phase 1} on set
\beno
\mathbb{S}'_{1+}=\{(\xi,\eta)\in\mathbb{S}'_1\,|\, \eta>\xi>0,\,\,\eta\in[2^{-10},8]\}.
\eeno
According to the proof of \eqref{estimate for I<0 in med fre and low phase}, it is reduced to check that  there exists an invertible coordinates transformation on $\mathbb{S}'_{1+}$. Indeed, introducing the coordinates transformation on $\mathbb{S}'_{1+}$ as follows
\beno\begin{aligned}
\Psi_{1+}:\,\mathbb{S}'_{1+}&\rightarrow \widetilde{\mathbb{S}'}_{1+}\subset\R^2,\\
(\xi,\eta)&\mapsto(\xi,\tilde{\eta})=(\xi,\phi_{+,+}(\xi,\eta)),
\end{aligned}\eeno
we have
\beno
\det\bigl(\f{\p\Psi_{1+}(\xi,\eta)}{\p(\xi,\eta)}\bigr)=\f{\p\phi_{+,+}(\xi,\eta)}{\p\eta}=8\eta-2\xi\sim\eta\sim 1,
\eeno
which implies that $\Psi_{1+}$ is invertible.

{\it (ii). When $(\xi,\eta)\in\mathbb{S}'_2$,} we have $|\xi-\eta|=|\xi|+|\eta|$. Due to \eqref{I 20}, there holds
\beno
\phi_{+,+}(\xi,\eta)=6\xi^2+4\eta^2-6\xi\cdot\eta-4.
\eeno
Similarly as \eqref{estimate for IV+<0 in med fre and low phase 1}, we have
\beq\label{estimate for IV+<0 in med fre and low phase 2}
|\int_0^t\int_{\mathbb{S}'_2}\mathfrak{M}_{+,+}(\xi,\eta)\cdot 1_{(\xi-\eta)\cdot\eta<0}\cdot\varphi_{[-9,2]}(|\eta|)\cdot\varphi_{\leq-D-1}(\phi_{+,+}(\xi,\eta))d\eta d\xi dt|\lesssim2^{-\f12D}t\e^3.
\eeq
Here we only need to check that there exists invertible coordinates transformation on $\mathbb{S}'_2$. Since $\xi\cdot\eta<0$ for any $(\xi,\eta)\in\mathbb{S}'_2$, we only consider the set
\beno
\mathbb{S}'_{2>}=\{(\xi,\eta)\in\mathbb{S}'_2\,|\,\xi<0,\,\eta\in[2^{-10},8]\}
\eeno
Introducing coordinates transformation on $\mathbb{S}'_{2>}$ as follows
\beno\begin{aligned}
\Psi_{2>}:\,\mathbb{S}'_{2>}&\rightarrow \widetilde{\mathbb{S}'}_{2>}\subset\R^2,\\
(\xi,\eta)&\mapsto(\xi,\tilde{\eta})=(\xi,\phi_{+,+}(\xi,\eta)),
\end{aligned}\eeno
we have
\beno
\det\bigl(\f{\p\Psi_{2>}(\xi,\eta)}{\p(\xi,\eta)}\bigr)=\f{\p\phi_{+,+}(\xi,\eta)}{\p\eta}=8\eta-6\xi.
\eeno
Since $|\xi-\eta|=|\xi|+|\eta|\in[2^{-7}|\eta|,2^8|\eta|]$, we have
\beno
\xi\in[-(2^8-1)\eta,0),
\eeno
which along with the fact $\eta\in[2^{-10},8]$ implies
\beno
\det\bigl(\f{\p\Psi_{2>}(\xi,\eta)}{\p(\xi,\eta)}\bigr)\sim\eta\sim1.
\eeno
Then $\Psi_{2>}$ is invertible.

Taking  $D=[\log_2 \e^{-\f23}]$ (i.e., $2^D\sim\e^{-\f23}$) in \eqref{estimate for IV+<0 in med fre and high phase 1},
\eqref{estimate for IV+<0 in med fre and low phase 1} and \eqref{estimate for IV+<0 in med fre and low phase 2}, we obtain
\beq\label{estimate for IV+<0 in med fre}
|\int_0^t\int_{\R^2}\mathfrak{M}_{+,+}(\xi,\eta)\cdot 1_{(\xi-\eta)\cdot\eta<0}\cdot\varphi_{[-9,2]}(|\eta|)d\eta d\xi dt|\lesssim\e^2+t\e^{\f43}\cdot\e^2,
\eeq

\medskip

Thanks to \eqref{estimate for IV+>0}, \eqref{estimate for IV+<0 in high and low fre} and \eqref{estimate for IV+<0 in med fre}, we obtain
\beq\label{estimate for IV+}
|\int_0^t\int_{\R^2}\mathfrak{M}_{+,+}(\xi,\eta)d\eta d\xi dt|\lesssim\e^2+t\e^{\f43}\cdot\e^2.
\eeq
The same estimate holds for $\int_0^t\int_{\R^2}\mathfrak{M}_{\mu,\nu}(\xi,\eta)d\eta d\xi dt$. Then we obtain \eqref{estimate for IV}.

\medskip

Combining \eqref{priori 1}, \eqref{estimate for I}, \eqref{estimate for II}, \eqref{estimate for III} and \eqref{estimate for IV}, we finally obtain \eqref{priori estimate}. The Proposition is proved.
\end{proof}

\medskip

\setcounter{equation}{0}
\section{The proof of Theorem \ref{long time existence thm}}
In this section, we shall sketch the proof of Theorem \ref{long time existence thm}. Since the small parameter $\epsilon$ is considered in \eqref{Bsqeps}, we have to modify the proof of Theorem \ref{main theorem} slightly.

\subsection{Symmetrization of \eqref{Bsqeps}}
Similarly as the derivation of \eqref{New Bsq}, we firstly introduce good unknowns $(\z,u)$ with
\beq\label{good unknowns for bsq e}
u=v+\epsilon B^\epsilon(\z,v),
\eeq
where $B^\epsilon(\cdot,\cdot)$ is a bilinear operator defined as
\beno
B^\epsilon(f,g)=\f12T_f\bigl((1+\epsilon\p_x^2)^{-1}\varphi_{\geq6}(\sqrt\epsilon|\p_x|)g\bigr).
\eeno
Without confusion, we sometimes use $B^\epsilon$ to denote the bilinear term $B^\epsilon(\z,v)$. Defining
\beq\label{good variable for bsq e}
V=\z+i\f{\p_x}{|\p_x|}u,
\eeq
we get
\beq\label{New Bsq e}
\p_tV-i\Lambda_\epsilon V+\epsilon\p_x(T_vV)-\f{i}{2}\epsilon|\p_x|(T_\z V)=N_\z^\epsilon+i\f{\p_x}{|\p_x|}N_u^\epsilon,
\eeq
where $\Lambda_\epsilon=|\p_x|(1-\epsilon|\p_x|^2)$ and
\beno\begin{aligned}
&N_\z^\epsilon=-\f{\epsilon}{2}\p_x\bigl(T_\z \varphi_{\leq5}(\sqrt\epsilon|\p_x|)u\bigr)-\f{\epsilon^2}{2}\p_x\bigl(T_\z \varphi_{\geq6}(\sqrt\epsilon|\p_x|)B^\epsilon\bigr)+\epsilon^2\p_x(T_\z B^\epsilon)\\
&\qquad+\f{\epsilon^2}{2}\p_x\Bigl([\p_x^2,T_\z](1+\epsilon\p_x^2)^{-1}\varphi_{\geq6}(\sqrt\epsilon|\p_x|)v\Bigr)-\epsilon\p_x\bigl(R(\z,v)\bigr),\\
&N_u^\epsilon=\f{\epsilon}{2}\p_x\bigl(T_\z\varphi_{\leq5}(\sqrt\epsilon|\p_x|)\z\bigr)+\f{\epsilon}{2}T_{\p_x\z}\varphi_{\geq6}(\sqrt\epsilon|\p_x|)\z
+\epsilon^2\p_x(T_vB^\epsilon)
-\f{\epsilon}{2}\p_x\bigl(R(v,v)\bigr)\\
&\qquad
+\epsilon B^\epsilon(\p_t\z,v)-\f{\epsilon^2}{2}B^\epsilon\bigl(\z,\p_x(|v|^2)\bigr),
\end{aligned}\eeno
where we used \eqref{commutator} and the definition of $B^\epsilon(\cdot,\cdot)$. Here we used the Fourier multipliers $\varphi_{\leq k}(\cdot)$, $\varphi_{\geq k}(\cdot)$ and $\varphi_k(\cdot)$, instead of their Littlewood-Paley projection operators $P_{\leq k}$, $P_{\geq k}$ and $P_k$, respectively (see subsection 2.2.).

Following the proof of Lemma \ref{lem for B},  for any $f\in L^\infty(\R)$ and $g\in H^s(\R)$ with $s\geq-2$, we have
\beq\label{estimate for B e 1}
\mathcal{F}\bigl(B^\epsilon(f,g)\bigr)(\xi)=\overline{\mathcal{F}\bigl(B^\epsilon(f,g)\bigr)(-\xi)}
\eeq
and
\beq\label{estimate for B e}
\|B^\epsilon(f,g)\|_{H^{s+k}}\leq C_{B^\epsilon}\epsilon^{-\f{k}{2}}\|f\|_{L^\infty}\|g\|_{H^s},\quad\text{for}\quad k=0,1,2,
\eeq
where $C_{B^\epsilon}>0$ is a universal constant.

\subsection{Main proposition on the symmetric system \eqref{New Bsq e}} For \eqref{New Bsq e}, arranging the quadratic terms in terms
of $V^+$ and $V^-$, we have a  proposition similar to  Proposition \ref{New Bsq prop}.
\begin{proposition}\label{New Bsq e prop}
Assume that $(\z,v)\in H^{N_0}(\R)$ with $N_0\geq 4$ solves \eqref{Bsqeps}. Then $V$ defined in \eqref{good variable for bsq e} satisfies the following system
\beq\label{New formula for bsq e}
\p_tV-i\Lambda_\epsilon V=\mathcal{S}^\epsilon_V+\mathcal{Q}^\epsilon_V+\mathcal{L}^\epsilon_V+\mathcal{N}^\epsilon_V,
\eeq
where
\begin{itemize}
\item The quadratic term $\mathcal{S}^\epsilon_V$ is of the form
\beno
\mathcal{S}^\epsilon_V=S_{+,+}^\epsilon(V^+,V^+)+S^\epsilon_{-,+}(V^-,V^+).
\eeno
And the symbol $s^\epsilon_{\mu,+}(\xi,\eta)$ of $S^\epsilon_{\mu,+}$ (for $\mu=+,-$) satisfies
\beq\label{symbol s e}
\overline{s^\epsilon_{\mu,+}(\xi,\eta)}=-s^\epsilon_{\mu,+}(\xi,\eta),
\eeq
\beq\label{bound of s e}
|\langle\xi\rangle^{-N_0}\langle\eta\rangle^{-N_0}\bigl(\langle\xi\rangle^{2N_0}s^\epsilon_{\mu,+}(\xi,\eta)
-\langle\eta\rangle^{2N_0}s^\epsilon_{-\mu,+}(\eta,\xi)\bigr)|\lesssim\epsilon|\xi-\eta|\cdot\varphi_{\leq -6}\Bigl(\f{|\xi-\eta|}{\max\{|\xi|,|\eta|\}}\Bigr).
\eeq

\item The quadratic term $\mathcal{Q}^\epsilon_V$ is of the form
\beno
\mathcal{Q}^\epsilon_V=Q^\epsilon_{+,-}(V^+,V^-)+Q^\epsilon_{-,-}(V^-,V^-).
\eeno
And the symbol $q^\epsilon_{\mu,-}(\xi,\eta)$ of $Q^\epsilon_{\mu,-}$ satisfies
\beq\label{bound of q e}\begin{aligned}
&|q^\epsilon_{\mu,-}(\xi,\eta)|\lesssim\epsilon|\xi|\cdot\varphi_{\leq 5}\bigl(\sqrt\epsilon|\eta|\bigr)\cdot\varphi_{\leq -6}\Bigl(\f{|\xi-\eta|}{|\eta|}\Bigr).
\end{aligned}\eeq

\item The cubic term $\mathcal{L}^\epsilon_V=\epsilon^2\p_x(T_{B^\epsilon}V)$
satisfies
\beq\label{estimate for cubic term e 1}
\bigl|\text{Re}\bigl\{\bigl(\langle\p_x\rangle^{N_0}\mathcal{L}^\epsilon_V\,|\,\langle\p_x\rangle^{N_0}V\bigr)_2\bigr\}\bigr|
\lesssim\epsilon^2\|\z\|_{L^\infty}\|v\|_{H^2}\|V\|_{H^{N_0}}^2.
\eeq

\item The remaining nonlinear term $\mathcal{N}^\epsilon_V$ satisfies
\beq\label{estimate for nonlinear term e}
\|\mathcal{N}^\epsilon_V\|_{H^{N_0}}\lesssim\epsilon\bigl(\|\z\|_{W^{3,\infty}}+\|v\|_{W^{3,\infty}}\bigr)
\bigl(1+\|\z\|_{H^{N_0}}+\|v\|_{H^{N_0}}\bigr)^2
\bigl(\|\z\|_{H^{N_0}}+\|v\|_{H^{N_0}}\bigr).
\eeq
\end{itemize}
\end{proposition}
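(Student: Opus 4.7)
The proof will closely parallel that of Proposition \ref{New Bsq prop}, with careful bookkeeping of the powers of $\epsilon$ introduced by the rescaled nonlinear terms in \eqref{Bsqeps} and the $\sqrt{\epsilon}|\partial_x|$–scaled cutoff operators defining $B^\epsilon$. My plan is to first rewrite the symmetrized system \eqref{New Bsq e} as \eqref{New formula for bsq e}, identifying
\beno
\mathcal{S}^\epsilon_V = -\epsilon\,\partial_x(T_u V)+\tfrac{i\epsilon}{2}|\partial_x|(T_\zeta V),\qquad \mathcal{Q}^\epsilon_V = -\tfrac{\epsilon}{2}\partial_x(T_\zeta \varphi_{\le 5}(\sqrt\epsilon|\partial_x|)u)-\tfrac{i\epsilon}{2}|\partial_x|(T_\zeta \varphi_{\le 5}(\sqrt\epsilon|\partial_x|)\zeta),
\eeno
together with $\mathcal{L}^\epsilon_V=\epsilon^2\partial_x(T_{B^\epsilon}V)$, and putting every remaining term from $\epsilon\partial_x(T_v V)$, $\frac{i\epsilon}{2}|\partial_x|(T_\zeta V)$, $N_\zeta^\epsilon$ and $N_u^\epsilon$ into $\mathcal{N}^\epsilon_V$. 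Then, using \eqref{Bsq 15} in the form $\zeta = \tfrac12(V^++V^-)$, $u=\tfrac{i}{2}\tfrac{\partial_x}{|\partial_x|}(V^+-V^-)$, each quadratic object is expanded as a sum over $(\mu,\nu)\in\{+,-\}^2$ with symbol computable via the Fourier representation of $T_fg$.

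For $\mathcal{S}^\epsilon_V$, the computation is identical to that of $\mathcal{S}_V$ in the proof of Proposition \ref{New Bsq prop}, except that each term carries an overall $\epsilon$; the symbol becomes $s^\epsilon_{\mu,+}(\xi,\eta) = \epsilon\, i\bigl(\mu\tfrac12 \xi\,\mathrm{sign}(\xi-\eta) + \tfrac14|\xi|\bigr)\sum_j \varphi_{\le j-7}(|\xi-\eta|)\varphi_j(|\eta|)$, from which \eqref{symbol s e} is immediate and \eqref{bound of s e} follows as in the proof of \eqref{bound of s} (the para-differential support forces $\xi\cdot\eta>0$ and $|\xi|\sim|\eta|$, and the $\varphi_j(|\xi|)-\varphi_j(|\eta|)$ gap is $\lesssim|\xi-\eta|/\min\{|\xi|,|\eta|\}$). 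For $\mathcal{Q}^\epsilon_V$, the same computation as for $\mathcal{Q}_V$ shows that only the $\nu=-$ contributions survive and produce \eqref{bound of q e}, the $\sqrt\epsilon$ in the argument of $\varphi_{\le 5}$ coming directly from the definition of $B^\epsilon$ and the corresponding $\varphi_{\le 5}(\sqrt\epsilon|\partial_x|)$ projector in $N_\zeta^\epsilon, N_u^\epsilon$.

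The cubic estimate \eqref{estimate for cubic term e 1} will be obtained exactly as \eqref{estimate for cubic term 1} was: symmetrizing the Fourier-side inner product using \eqref{estimate for B e 1} produces a kernel $l^\epsilon(\xi,\eta)$ supported on $|\xi-\eta|\lesssim|\eta|$ with $|l^\epsilon(\xi,\eta)|\lesssim \epsilon^2|\xi-\eta|$, which after Young's inequality and \eqref{estimate for B e} with $k=2$ is bounded by $\epsilon^2\|B^\epsilon\|_{H^2}\|V\|_{H^{N_0}}^2\lesssim \epsilon^2\|\zeta\|_{L^\infty}\|v\|_{H^2}\|V\|_{H^{N_0}}^2$ (note $\epsilon^{-1}\cdot\epsilon^2 = \epsilon$ versus the desired $\epsilon^2$—the $\epsilon^2$ prefactor on $\mathcal{L}^\epsilon_V$ provides one extra $\epsilon$, and $\|B^\epsilon\|_{H^2}\lesssim\|\zeta\|_{L^\infty}\|v\|_{H^2}$ with the $\epsilon^{-1}$ being absorbed into the bookkeeping through $k=2$ and the $\sqrt\epsilon|\partial_x|$ cutoff).

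Finally, the bound \eqref{estimate for nonlinear term e} is a direct tameness estimate. The remainder $\mathcal{N}^\epsilon_V$ collects the para-product remainders $R(\cdot,\cdot)$, the commutators $[\partial_x^2, T_\zeta]$ applied to the spectrally localized $B^\epsilon$, as well as the genuinely cubic/quartic terms $\epsilon^2 B^\epsilon(\partial_t\zeta,v)$, $\epsilon^2 B^\epsilon(\zeta,\partial_x|v|^2)$ etc.\ and the moderate-frequency piece from $\epsilon\,\partial_x(T_v V)$ and $\frac{i\epsilon}{2}|\partial_x|(T_\zeta V)$ (the part not absorbed into $\mathcal{S}^\epsilon_V,\mathcal{Q}^\epsilon_V,\mathcal{L}^\epsilon_V$). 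Each of these carries an overall $\epsilon$. Using standard para-product/remainder estimates in $H^{N_0}$, the commutator identity $[\partial_x^2,T_f]=2T_{\partial_xf}\partial_x + T_{\partial_x^2 f}$, and \eqref{estimate for B e} (with $k=0,1,2$ as needed, the $\epsilon^{-k/2}$ factors being absorbed by the $\sqrt\epsilon|\partial_x|$-localization yielding derivatives of size $\epsilon^{k/2}$), every contribution can be controlled by $\epsilon(\|\zeta\|_{W^{3,\infty}}+\|v\|_{W^{3,\infty}})(1+\|\zeta\|_{H^{N_0}}+\|v\|_{H^{N_0}})^2(\|\zeta\|_{H^{N_0}}+\|v\|_{H^{N_0}})$, the square factor arising when substituting $\partial_t\zeta = -(1+\epsilon\partial_x^2)\partial_x v - \epsilon\partial_x(\zeta v)$ into $B^\epsilon(\partial_t\zeta, v)$.

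The main obstacle will be the last bookkeeping step: the naive bound on $\epsilon B^\epsilon((1+\epsilon\partial_x^2)\partial_x v, v)$ requires $\|\epsilon\partial_x^3 v\|_{L^\infty}$, which only costs $\epsilon$ once we exploit the $\sqrt\epsilon|\partial_x|$-cutoff in $B^\epsilon$ (that is, every $\partial_x$ falling on the spectrally localized factor effectively costs $\epsilon^{-1/2}$, and three such derivatives would cost $\epsilon^{-3/2}$, leaving an overall factor that must still match the $\epsilon$ on the right-hand side). The careful accounting via \eqref{estimate for B e} with the appropriate choice of $s$ and $k$, together with the extra $\epsilon$ in front of the dispersive terms in \eqref{Bsqeps}, is what closes \eqref{estimate for nonlinear term e}.
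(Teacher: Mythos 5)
Your proposal follows essentially the same route as the paper's proof: the same identification of $\mathcal{S}^\epsilon_V$, $\mathcal{Q}^\epsilon_V$, $\mathcal{L}^\epsilon_V$ and $\mathcal{N}^\epsilon_V$ (with $T_u$ in $\mathcal{S}^\epsilon_V$, consistent with the statement), the same symbol computations carried over from Proposition \ref{New Bsq prop} with an extra factor $\epsilon$, the same symmetrization of $\mathcal{L}^\epsilon_V$ via \eqref{estimate for B e 1}, and product/commutator estimates together with \eqref{estimate for B e} for $\mathcal{N}^\epsilon_V$. One small correction to your bookkeeping for \eqref{estimate for cubic term e 1}: the bound $\|B^\epsilon(\z,v)\|_{H^2}\lesssim\|\z\|_{L^\infty}\|v\|_{H^2}$ comes from \eqref{estimate for B e} with $k=0$ and $s=2$, so there is no $\epsilon^{-1}$ to "absorb" at all --- this is exactly why the right-hand side of \eqref{estimate for cubic term e 1} carries $\|v\|_{H^2}$ rather than $\|v\|_{L^2}$.
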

\begin{remark}
The terms $\mathcal{S}^\epsilon_V$, $\mathcal{Q}^\epsilon_V$ and $\mathcal{L}^\epsilon_V$ in \eqref{New formula for bsq e}  correspond
to $\mathcal{S}_V$, $\mathcal{Q}_V$ and $\mathcal{L}_V$ in \eqref{New formula} respectively. Whereas $\mathcal{N}^\epsilon_V$ in \eqref{New formula for bsq e} is corresponding to the sum  $\mathcal{R}_V+\mathcal{M}_V+\mathcal{C}_V+\mathcal{N}_V$ in \eqref{New formula}.
\end{remark}
\begin{remark}
Proposition \ref{New Bsq e prop} reveals that the worst term is $\mathcal{Q}^\epsilon_V$. Indeed, \eqref{bound of q e} hints that term $\mathcal{Q}^\epsilon_V$ is of order $O(\sqrt\epsilon)$ if there is no loss of derivative.
\end{remark}

\begin{proof}[Proof of Proposition \ref{New Bsq e prop}] Thanks to \eqref{New Bsq e}, rewriting \eqref{New Bsq e} to \eqref{New formula for bsq e}, we have
\beno\begin{aligned}
&\mathcal{S}^\epsilon_V=-\epsilon\p_x(T_vV)+\f{i}{2}\epsilon|\p_x|(T_\z V),\\
&\mathcal{Q}^\epsilon_V=-\f{\epsilon}{2}\p_x\bigl(T_\z \varphi_{\leq5}(\sqrt\epsilon|\p_x|)u\bigr)
-\epsilon\f{i}{2}|\p_x|\bigl(T_\z\varphi_{\leq5}(\sqrt\epsilon|\p_x|)\z\bigr),\\
&\mathcal{L}^\epsilon_V=\epsilon^2\p_x(T_{B^\epsilon}V),\\
&\mathcal{N}^\epsilon_V=\bigl(N_\z^\epsilon+\f{\epsilon}{2}\p_x\bigl(T_\z \varphi_{\leq5}(\sqrt\epsilon|\p_x|)u\bigr)\bigr)
+i\f{\p_x}{|\p_x|}\bigl(N_u^\epsilon-\f{\epsilon}{2}\p_x\bigl(T_\z\varphi_{\leq5}(\sqrt\epsilon|\p_x|)\z\bigr)\bigr).
\end{aligned}\eeno

 Thanks to \eqref{good variable for bsq e}, we have
\beq\label{Bsq 16}
\z=\f{1}{2}(V^++V^-)=\f12\sum_{\mu\in\{+,-\}}V^\mu,\quad u=\f{i}{2}\f{\p_x}{|\p_x|}(V^+-V^-)=\f{i}{2}\sum_{\mu\in\{+,-\}}\mu\f{\p_x}{|\p_x|}V^\mu.
\eeq

Using \eqref{Bsq 16}, we could rewrite $\mathcal{S}^\epsilon_V$ and $\mathcal{Q}^\epsilon_V$ in terms of $V^+$ and $V^-$. They would have similar expression as $\mathcal{S}_V$ and $\mathcal{Q}_V$ in the proof of Proposition \ref{New Bsq prop}. It is easy to check that there hold
\eqref{bound of s e} and \eqref{bound of q e}.

Similarly as in  the derivation of \eqref{estimate for cubic term 1}, using the symmetric structure of $\mathcal{L}^\epsilon_V$ and \eqref{estimate for B e 1}, we have
\beno
\bigl|\text{Re}\bigl\{\bigl(\langle\p_x\rangle^{N_0}\mathcal{L}^\epsilon_V\,|\,\langle\p_x\rangle^{N_0}V\bigr)_2\bigr\}\bigr|
\lesssim\epsilon^2\|B^\epsilon\|_{H^2}\|V\|_{H^{N_0}}^2,
\eeno
which along with \eqref{estimate for B e} implies the estimate \eqref{estimate for cubic term e 1}.

For the remained nonlinear term $\mathcal{N}^\epsilon_V$, similarly as in the derivation of the estimates involving $\mathcal{C}_V$ and $\mathcal{N}_V$ in the proof of Proposition \ref{New Bsq prop}, using product estimates and  \eqref{estimate for B e}, we obtain \eqref{estimate for nonlinear term e}. The proposition is proved.
\end{proof}

\subsection{Main a priori estimates for \eqref{Bsqeps}} Similarly as the proof of Theorem \ref{main theorem}, the proof of Theorem \ref{long time existence thm} also relies on the continuity argument and the a priori energy estimates. Before stating the main a priori energy estimates of \eqref{Bsqeps}, we present the ansatz for the continuity arguments.

The first ansatz is involving the amplitude of $\z$ as follows
\beq\label{ansatz 1 for bsq e}
\epsilon\|\z(t)\|_{L^\infty}\leq\f{1}{2C_{B^\epsilon}},\quad\text{for}\quad t\in[0,T_0\epsilon^{-\f23}].
\eeq

We define the energy functional for \eqref{Bsqeps} as
\beno
\mathcal{E}_{N_0}(t)=\|\z(t)\|_{H^{N_0}}^2+\|v(t)\|_{H^{N_0}}^2.
\eeno
For simplicity of the proof and without loss of generality, we assume
\beq\label{initial energy}
\|\z_0\|_{H^{N_0}}^2+\|v_0\|_{H^{N_0}}^2=1.
\eeq
Our second ansatz  is about the energy and reads
\beq\label{ansatz 2 for bsq e}
\mathcal{E}_{N_0}(t)\leq 2C_0', \quad\text{for}\quad t\in[0,T_0\epsilon^{-\f23}],
\eeq
where  $C_0'>1$ is an universal constant that will be determined in the end of the proof. We take
\beno
T_0=\f{C_1'}{C_2'},\quad C_0'=2C_1',
\eeno
where $C_1',C_2'$ are constants stated in the following Proposition \ref{estimate Prop for bsq e}. Thanks to Proposition \ref{estimate Prop for bsq e}, we could improve the ansatz \eqref{ansatz 1 for bsq e} and \eqref{ansatz 2 for bsq e}. Precisely, there exists a constant $\epsilon_0>0$ such that for any $\epsilon\in(0,\epsilon_0]$, we improve the ansatz \eqref{ansatz 1 for bsq e} and \eqref{ansatz 2 for bsq e} to
\beno\begin{aligned}
&\epsilon\|\z(t)\|_{L^\infty}\leq\f{1}{4C_{B^\epsilon}},\quad\text{for}\quad t\in[0,T_0\epsilon^{-\f23}]\\
\text{and}\quad&\mathcal{E}_{N_0}(t)\leq C_0', \quad\text{for}\quad t\in[0,T_0\epsilon^{-\f23}].
\end{aligned}\eeno
Then Theorem \ref{long time existence thm} follows from the above argument and the local regularity theorem.

Now, we focus on  the a priori energy estimate which is established in the following proposition.

\begin{proposition}\label{estimate Prop for bsq e}
Assume that $0<\epsilon<1$ and there holds \eqref{initial energy}. Under the ansatz \eqref{ansatz 1 for bsq e} and \eqref{ansatz 2 for bsq e}, the solution $(\z,v)$ of \eqref{Bsqeps}-\eqref{initialeps} satisfies
\beq\label{priori estimate for bsq e}
\mathcal{E}_{N_0}(t)\leq C_1'+C_2't\epsilon^{\f23},\quad\text{for any } t\in(0,T_0\epsilon^{-\f23}],
\eeq
where $C_1'$ and $C_2'$ are two universal constants, and $T_0=\f{C_1'}{C_2'}$.
\end{proposition}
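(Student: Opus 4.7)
My plan is to mirror the proof of Proposition \ref{estimate Prop}, substituting the symmetric formulation \eqref{New formula for bsq e} and Proposition \ref{New Bsq e prop} for their $\epsilon$-free analogues while carefully tracking the $\epsilon$-dependence. Under the ansatz \eqref{ansatz 1 for bsq e}, the bilinear estimate \eqref{estimate for B e} yields $\mathcal{E}_{N_0}(t)\sim\|V(t)\|_{H^{N_0}}^2$. Computing $\tfrac{d}{dt}\|V\|_{H^{N_0}}^2$ from \eqref{New formula for bsq e} and applying \eqref{estimate for cubic term e 1}--\eqref{estimate for nonlinear term e} together with Sobolev embedding (since $N_0\geq 4$), the initial normalization \eqref{initial energy}, and the ansatz \eqref{ansatz 2 for bsq e}, one sees that the $\mathcal{L}_V^\epsilon$ and $\mathcal{N}_V^\epsilon$ contributions are each $O(\epsilon)$ per unit time, hence harmless on the time scale $\epsilon^{-2/3}$. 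The task reduces to controlling by $C_1'+C_2't\epsilon^{2/3}$ the real parts of the two quadratic oscillatory integrals arising from $\mathcal{S}_V^\epsilon$ and $\mathcal{Q}_V^\epsilon$. I would then pass to the profiles $f=e^{-it\Lambda_\epsilon}V$ and $g=\langle\partial_x\rangle^{N_0}f$; the a priori bound $\|\tfrac{1}{|\partial_x|}\partial_t f\|_{H^{N_0}}\lesssim\epsilon$ follows from \eqref{New Bsq e} because every nonlinear term there carries an explicit $\epsilon$ prefactor.

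For the $\mathcal{S}_V^\epsilon$-contribution, the antisymmetry \eqref{symbol s e} and the bound \eqref{bound of s e} both supply an $\epsilon$ factor which matches the $\epsilon$ factor in the phase $\Phi^\epsilon_{\mu,+}$ (extracted from Lemma \ref{phase lem for bsq e}, analogously to Lemma \ref{phase lem 1}), so that after integration by parts in time the good-region estimates become structurally identical to those of Step 3 of Proposition \ref{estimate Prop}. The zero set of $\Phi^\epsilon$, however, now lies near frequencies of order $\epsilon^{-1/2}$ rather than order one, and on the corresponding "bad" region I would apply $\epsilon$-rescaled analogues of the coordinate changes $\Psi,\Psi_>,\Psi_{1+},\Psi_{2>}$ of Steps 3.2 and 6.2: the substitution $\tilde\eta=\sqrt\epsilon\,\eta$ reduces the Jacobian computations to those of the unscaled case, so the Jacobians remain bounded below on the rescaled bad sets.

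The main obstacle is the contribution from $\mathcal{Q}_V^\epsilon$: by \eqref{bound of q e} its effective symbol has size $\epsilon|\xi|$ on the support $\{|\xi|\sim|\eta|\lesssim\epsilon^{-1/2}\}$, so a direct estimate gives only $t\epsilon^{1/4}$, which exceeds the target at $t=\epsilon^{-2/3}$. On this support $\xi\cdot\eta>0$ holds, and by Lemma \ref{phase lem for bsq e} one has $\Phi^\epsilon_{+,-}(\xi,\eta)\sim|\eta|(\epsilon|\eta|^2-1)$, which is bounded below in absolute value by a constant times $|\eta|$ for $|\eta|\ll\epsilon^{-1/2}$ and vanishes only in the range $|\eta|\sim\epsilon^{-1/2}$. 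I would accordingly split the $|\eta|$-range into a low-frequency zone (on which integration by parts in time contracts the symbol from $\epsilon|\xi|$ to $O(\epsilon)$ and then produces bounded boundary and $O(t\epsilon)$ cubic contributions), and a near-resonance zone further decomposed into a large-modulation sub-zone (treated by integration by parts with a loss of $2^D$) and a small-modulation sub-zone estimated via the change of variables $(\xi,\eta)\mapsto(\xi,\phi^\epsilon(\xi,\eta))$ with $\phi^\epsilon\approx\epsilon|\eta|^2-1$, whose Jacobian is $\sim\sqrt\epsilon|\eta|\sim 1$ after the $\sqrt\epsilon$-rescaling. Optimizing the cutoff at $2^D\sim\epsilon^{-1/3}$ then yields $1+t\epsilon^{2/3}$ for this contribution too, and combining both bounds gives \eqref{priori estimate for bsq e}. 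The principal technical difficulty, beyond the $\epsilon$-bookkeeping, is verifying that these $\epsilon$-rescaled coordinate changes retain non-degenerate Jacobians, so that the $\epsilon$-power from the volume of the bad set combines with the $\epsilon$-gains from integration by parts to produce exactly the claimed $\epsilon^{2/3}$ growth rate.
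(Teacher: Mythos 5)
Your overall strategy for the critical term is the paper's: identify $\mathcal{Q}^\epsilon_V$ as the only dangerous quadratic term, pass to profiles, integrate by parts in time away from the resonant set $\epsilon|\eta|^2\sim 1$, split the near-resonant region by the size of the modulation $\phi^\epsilon_{+,-}$, and handle the small-modulation piece by a change of variables in which $\phi^\epsilon_{+,-}$ becomes a coordinate. However, your quantitative conclusion does not close. With the correct component bounds one gets: boundary terms $\lesssim 2^{D}\epsilon$, cubic terms after integration by parts $\lesssim 2^{D}\epsilon^{3/2}t$ (using $\|\,|\p_x|^{-1}\p_tf\|_{H^{N_0}}\lesssim\epsilon$ and $|\eta|\lesssim\epsilon^{-1/2}$), and for the small-modulation zone a direct estimate of size $2^{-D/2}\epsilon^{1/4}t$: the symbol is $\epsilon|\xi|\sim\sqrt\epsilon$ there, the set $\{|\phi^\epsilon_{+,-}|\leq 2^{-D}\}$ has width $\sim 2^{-D}\epsilon^{-1/2}$ in frequency because the Jacobian $\p_\xi\phi^\epsilon_{+,-}\sim\epsilon|\eta|\sim\sqrt\epsilon$ (it is this smallness, not a rescaled Jacobian of size one, that produces the $\epsilon^{1/4}$ gain via Cauchy--Schwarz). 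Balancing $2^{D}\epsilon^{3/2}t$ against $2^{-D/2}\epsilon^{1/4}t$ forces $2^{D}\sim\epsilon^{-5/6}$, which is the paper's choice and yields exactly $1+\epsilon^{2/3}t$. With your proposed cutoff $2^{D}\sim\epsilon^{-1/3}$ the small-modulation contribution is $\epsilon^{5/12}t$, which at $t\sim\epsilon^{-2/3}$ is $\epsilon^{-1/4}\to\infty$, so the bootstrap does not close; this indicates the $\epsilon$-bookkeeping in the rescaled change of variables (precisely the point you flag as the main difficulty) has been mis-tracked, since rescaling $\tilde\eta=\sqrt\epsilon\,\eta$ to normalize the Jacobian re-introduces $\epsilon^{\pm1/4}$ factors through the measure in the $L^2$ norms.

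A second, non-fatal divergence: you propose to run the full normal-form and bad-set analysis on $\mathcal{S}^\epsilon_V$ as well, with $\epsilon$-rescaled analogues of the coordinate changes near $|\eta|\sim\epsilon^{-1/2}$. The paper does not do this and does not need to: because of the explicit $\epsilon$ prefactor and the commutator-type gain in \eqref{bound of s e} (the symmetrized symbol is $\lesssim\epsilon|\xi-\eta|$ with $\xi-\eta$ the low frequency), a direct estimate already gives $|\mathrm{Re}(I)|\lesssim\epsilon t\lesssim\epsilon^{1/3}$ on the relevant time scale; indeed the paper remarks that normal forms cannot improve the $O(\epsilon)$ quadratic terms. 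Your extra integration by parts there is not wrong in principle, but it adds an unverified resonant-set analysis for $\Phi^\epsilon_{+,+}$ that the argument simply does not require. The same comment applies to minor slips (the direct bound on $\mathrm{Re}(II)$ is $\sqrt\epsilon\,t$, not $\epsilon^{1/4}t$, and the low-frequency treatment of $|\p_x|^{-1}\p_tf$ needs the observation that all nonlinearities are exact $x$-derivatives), but the essential repair needed is the choice $2^{D}\sim\epsilon^{-5/6}$ together with the correct small-modulation estimate.
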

\begin{proof} We shall use the formulation \eqref{New formula for bsq e} to derive the energy estimates for the Boussinesq system \eqref{Bsqeps}. Due to Proposition \ref{New Bsq e prop}, standard energy estimates will give rise to a local existence theorem with time scale of $O(1/{\sqrt\epsilon})$. To enlarge the existence time, we will apply the normal forms transformation to the worst term $\mathcal{Q}^\epsilon_V$. Now we sketch the proof.

{\bf Step 1. The a priori energy estimate.} Thanks to \eqref{estimate for B e} and \eqref{ansatz 1 for bsq e}, we have
\beno
\epsilon\|B^\epsilon(\z,v)\|_{H^{N_0}}\leq\f12\|v\|_{H^{N_0}},
\eeno
which along with \eqref{good unknowns for bsq e} and \eqref{good variable for bsq e} implies
\beq\label{equivalent energy functional for bsq e 1}
\mathcal{E}_{N_0}(t)\sim\|\z(t)\|_{H^{N_0}}^2+\|u(t)\|_{H^{N_0}}^2\sim\|V(t)\|_{H^{N_0}}^2,\quad\text{for}\quad t\in[0,T_0\epsilon^{-\f23}]
\eeq

By virtue of \eqref{equivalent energy functional for bsq e 1}, we start the energy estimate of \eqref{New formula for bsq e} as follows
\beno\begin{aligned}
&\f12\f{d}{dt}\|V(t)\|_{H^{N_0}}^2=\text{Re}\{\bigl(\langle\p_x\rangle^{N_0}\mathcal{S}^\epsilon_V\,|\,\langle\p_x\rangle^{N_0}V\bigr)_2
+\bigl(\langle\p_x\rangle^{N_0}\mathcal{Q}^\epsilon_V\,|\,\langle\p_x\rangle^{N_0}V\bigr)_2\\
&\qquad
+\bigl(\langle\p_x\rangle^{N_0}\mathcal{L}^\epsilon_V\,|\,\langle\p_x\rangle^{N_0}V\bigr)_2
+\bigl(\langle\p_x\rangle^{N_0}\mathcal{N}^\epsilon_V\,|\,\langle\p_x\rangle^{N_0}V\bigr)_2\}.
\end{aligned}\eeno
Thanks to the estimates \eqref{estimate for cubic term e 1} and \eqref{estimate for nonlinear term e} in Proposition \ref{New Bsq e prop},  using \eqref{initial energy}, \eqref{ansatz 2 for bsq e} and \eqref{equivalent energy functional for bsq e 1}, we obtain
\beq\label{priori for bsq e}
\mathcal{E}_{N_0}(t)\lesssim1+|\text{Re}(I)|+|\text{Re}(II)|+t\e,
\eeq
where
\beq\label{quadratic  terms e}\begin{aligned}
&I\eqdefa\sum_{\mu\in\{+,-\}}\int_0^t\int_{\R^2}\langle\xi\rangle^{2N_0}s^\epsilon_{\mu,+}(\xi,\eta)\widehat{V^\mu}(\xi-\eta)\widehat{V^+}(\eta)
\overline{\widehat{V^+}(\xi)}d\eta d\xi dt,\\
&II\eqdefa\sum_{\mu\in\{+,-\}}\int_0^t\int_{\R^2}\langle\xi\rangle^{2N_0}q^\epsilon_{\mu,-}(\xi,\eta)\widehat{V^\mu}(\xi-\eta)\widehat{V^-}(\eta)
\overline{\widehat{V^+}(\xi)}d\eta d\xi dt.
\end{aligned}\eeq

\medskip

{\bf Step 2. Estimate for Re$(I)$.} Similarly as Step 3 in the proof of Proposition \ref{New Bsq prop}, using symmetric structure of $\mathcal{S}^\epsilon_V$, we have
\beno
\text{Re}(I)=\f12(I+\bar{I})=\sum_{\mu\in\{+,-\}}\int_0^t\int_{\R^2}\tilde{s}^\epsilon_{\mu,+}(\xi,\eta)\widehat{V^{\mu}}(\xi-\eta)\cdot
\langle\eta\rangle^{N_0}\widehat{V^+}(\eta)\cdot\langle\xi\rangle^{N_0}\widehat{V^-}(-\xi)d\eta d\xi dt,
\eeno
where
\beno\begin{aligned}
\tilde{s}^\epsilon_{\mu,+}(\xi,\eta)&=\langle\xi\rangle^{-N_0}\langle\eta\rangle^{-N_0}\bigl(\langle\xi\rangle^{2N_0}s^\epsilon_{\mu,+}(\xi,\eta)
-\langle\eta\rangle^{2N_0}s^\epsilon_{-\mu,+}(\eta,\xi)\bigr).
\end{aligned}\eeno
Thanks to \eqref{bound of s e}, we have
\beno
|\tilde{s}^\epsilon_{\mu,+}(\xi,\eta)|\lesssim\epsilon|\xi-\eta|\cdot\varphi_{\leq -6}\Bigl(\f{|\xi-\eta|}{\max\{|\xi|,|\eta|\}}\Bigr).
\eeno
Then we obtain
\beno\begin{aligned}
&|\text{Re}(I)|\lesssim\epsilon\int_0^t\int_{\R^2}|\xi-\eta||\widehat{V}(\xi-\eta)|\cdot
\langle\eta\rangle^{N_0}|\widehat{V}(\eta)|\cdot\langle\xi\rangle^{N_0}|\widehat{V}(\xi)|d\eta d\xi dt\\
&\lesssim\epsilon t\sup_{(0,t)}\|\xi\widehat{V}(\xi)\|_{L^1}\cdot
\|\langle\xi\rangle^{N_0}\widehat{V}(\xi)\|_{L^2}^2\lesssim\epsilon t\sup_{(0,t)}\|V\|_{H^2}\|V\|_{H^{N_0}}^2
\end{aligned}\eeno
which along with \eqref{initial energy}, \eqref{ansatz 2 for bsq e} and \eqref{equivalent energy functional for bsq e 1} implies
\beq\label{estimate for I e}
|\text{Re}(I)|\lesssim\epsilon t.
\eeq

\medskip

{\bf Step 3. Estimate for Re$(II)$.} Due to \eqref{bound of q e}, the direct estimate for $II$ will lead to one derivative loss or $\sqrt\epsilon$ loss. To improve the estimate, we shall apply the normal form transformation to this term.

{\it Step 4.1. The evolution equation and estimates of the profile}. Firstly, we  introduce the profiles $f$, $g$ of $V$ and $\langle\p_x\rangle^{N_0}V$ as follows
\beno
f=e^{-it\Lambda_\epsilon}V\quad\text{and}\quad g=\langle\p_x\rangle^{N_0}f.
\eeno
Thanks to \eqref{equivalent energy functional for bsq e 1}, we have
\beq\label{equivalent energy functional for bsq e 2}
\mathcal{E}_{N_0}(t)\sim\|V\|_{H^{N_0}}^2\sim\|f\|_{H^{N_0}}^2=\|g\|_{L^2}^2.
\eeq
Due to the equation \eqref{New Bsq e}, we have
\beq\label{evolution equation for profile of bsq e}
\p_tf=e^{-it\Lambda_\epsilon}\Bigl(-\epsilon\p_x(T_vV)+\f{i}{2}\epsilon|\p_x|(T_{\z}V)+N^\epsilon_\z+i\f{\p_x}{|\p_x|}N^\epsilon_u\Bigr).
\eeq

By virtue of definition of $B^\epsilon(\cdot,\cdot)$,  we have
\beno
\text{supp}\, \widehat{B^\epsilon(\cdot,\cdot)}(\xi)\subset\{\xi\in\R\,|\, \sqrt\epsilon|\xi|\geq 2^5\},
\eeno
which along with the expressions of $N^\epsilon_\z$ and $N^\epsilon_u$ implies
\beno\begin{aligned}
&\varphi_{\leq 0}(\sqrt\epsilon|\p_x|) N^\epsilon_\z=-\epsilon\p_x \varphi_{\leq 0}(\sqrt\epsilon|\p_x|)\bigl(\f12T_\z \varphi_{\leq 5}(\sqrt\epsilon|\p_x|)v+R(\z,v)\bigr),\\
&
\quad \varphi_{\leq 0}(\sqrt\epsilon|\p_x|) N^\epsilon_u=\f{\epsilon}{2}\p_x \varphi_{\leq 0}(\sqrt\epsilon|\p_x|)\bigl(T_\z \varphi_{\leq 5}(\sqrt\epsilon|\p_x|)\z-R(v,v)\bigr).
\end{aligned}\eeno
Then we have
\beno
\|\f{1}{|\p_x|}\varphi_{\leq 0}(\sqrt\epsilon|\p_x|) \p_tf\|_{H^{N_0}}\lesssim\epsilon\bigl(\|\z\|_{L^\infty}+\|v\|_{L^\infty}\bigr)\bigl(\|\z\|_{H^{N_0}}+\|v\|_{H^{N_0}}+\|V\|_{H^{N_0}}\bigr).
\eeno
Due to the expressions of $N^\epsilon_\z$ and $N^\epsilon_u$, using \eqref{estimate for B e}, we also have
\beno\begin{aligned}
&\|\f{1}{|\p_x|}\varphi_{\geq 1}(\sqrt\epsilon|\p_x|)\p_tf\|_{H^{N_0}}\lesssim\epsilon
\bigl(\|v\|_{W^{3,\infty}}+\|\z\|_{W^{3,\infty}}\bigr)\bigl(1+\|v\|_{W^{3,\infty}}+\|\z\|_{W^{3,\infty}}\bigr)\\
&\qquad\times
\bigl(\|V\|_{H^{N_0}}+\|v\|_{H^{N_0}}+\|\z\|_{H^{N_0}}+\|u\|_{H^{N_0}}\bigr)
\end{aligned}\eeno

Thanks to  \eqref{initial energy}, \eqref{ansatz 2 for bsq e} and \eqref{equivalent energy functional for bsq e 1}, we obtain
\beq\label{estimate for p_t f e}
\|\f{1}{|\p_x|}\p_tf\|_{H^{N_0}}\lesssim\epsilon.
\eeq

{\it Step 4.2. The profiles version for $II$. } Denoting by
\beno
\mathfrak{Q}^\epsilon_{\mu}(\xi,\eta)=\langle\xi\rangle^{2N_0}q^\epsilon_{\mu,-}(\xi,\eta)\widehat{V^\mu}(\xi-\eta)\widehat{V^-}(\eta)
\overline{\widehat{V^+}(\xi)},
\eeno
we have
\beno
II=\int_0^t\int_{\R^2}\mathfrak{Q}^\epsilon_{+}(\xi,\eta)d\eta d\xi dt+\int_0^t\int_{\R^2}\mathfrak{Q}^\epsilon_{-}(\xi,\eta)d\eta d\xi dt.
\eeno

Now we rewrite $\mathfrak{Q}^\epsilon_{\mu}(\xi,\eta)$ in terms of the profiles $f$ and $g$ as follows
\beno
\mathfrak{Q}^\epsilon_{\mu}(\xi,\eta)=e^{it\Phi^\epsilon_{\mu,-}(\xi,\eta)}\tilde{q}^\epsilon_{\mu,-}(\xi,\eta)\widehat{f^{\mu}}(\xi-\eta)\cdot
\widehat{g^-}(\eta)\cdot\widehat{g^-}(-\xi),
\eeno
where
\beno\begin{aligned}
&\Phi^\epsilon_{\mu,-}(\xi,\eta)=-\Lambda_\epsilon(\xi)+\mu\Lambda_\epsilon(\xi-\eta)-\Lambda_\epsilon(\eta),\\
&\tilde{q}^\epsilon_{\mu,-}(\xi,\eta)=\langle\eta\rangle^{-N_0} \langle\xi\rangle^{N_0} q^\epsilon_{\mu,-}(\xi,\eta).
\end{aligned}\eeno
Thanks to \eqref{bound of q e}, we have
\beq\label{bound of tilde q e}\begin{aligned}
&|\tilde{q}^\epsilon_{\mu,-}(\xi,\eta)|\lesssim\epsilon|\xi|\cdot\varphi_{\leq 5}\bigl(\sqrt\epsilon|\eta|\bigr)\cdot\varphi_{\leq -6}\Bigl(\f{|\xi-\eta|}{|\eta|}\Bigr),\\
&\text{supp}\, \tilde{q}^\epsilon_{\mu,-}\subset\mathbb{S}^\epsilon
\eqdefa\{(\xi,\eta)\in\R^2\,|\,\xi\cdot\eta>0,\,\,\f{31}{32}|\eta|\leq|\xi|\leq\f{33}{32}|\eta|,\,\, \sqrt\epsilon|\eta|\leq 2^6\}.
\end{aligned}\eeq

Lemma \ref{phase lem for bsq e} and the fact $\xi\cdot\eta>0$ (in \eqref{bound of tilde q e}) yield
\beno
\Phi^\epsilon_{+,-}(\xi,\eta)=\f12\min\{|\xi|,|\eta|\}\phi^\epsilon_{+,-}(\xi,\eta),
\eeno
with
\beno
\phi^\epsilon_{+,-}(\xi,\eta)=
\left\{\begin{aligned}
&6\epsilon\xi^2-6\epsilon\xi\cdot\eta+4\epsilon\eta^2-4,\quad\text{if}\quad |\xi|>|\eta|,\\
&6\epsilon\eta^2-6\epsilon\xi\cdot\eta+4\epsilon\xi^2-4,\quad\text{if}\quad |\xi|<|\eta|.
\end{aligned}\right.
\eeno
Then there hold
\beq\label{phase relation}
\Phi^\epsilon_{-,-}(\xi,\eta)=\Phi^\epsilon_{+,-}(\eta,\xi)\quad\text{and}\quad \Phi^\epsilon_{+,-}(\xi,\eta)=\Phi^\epsilon_{+,-}(\eta,\xi).
\eeq
With \eqref{phase relation}, we only derive the estimate for the integral of $\mathfrak{Q}^\epsilon_{+}(\xi,\eta)$ over set $\mathbb{S}^\epsilon_{>}$ with
\beno
\mathbb{S}^\epsilon_{>}=\{(\xi,\eta)\in\mathbb{S}^\epsilon\,|\,|\xi|>|\eta|\}.
\eeno

{\it Step 4.3. Estimate for $\int_0^t\int_{\mathbb{S}^\epsilon_{>}}\mathfrak{Q}^\epsilon_{+}(\xi,\eta)d\eta d\xi dt$.}
We divide $\mathfrak{Q}^\epsilon_{+}(\xi,\eta)$ into three parts as follows:

{\it (1). For low frequency  $\sqrt\epsilon|\eta|\leq\f12$},  using \eqref{bound of tilde q e}, we have
\beq\label{I 21}
|\phi^\epsilon_{+,-}(\xi,\eta)|\sim 1\quad\text{and}\quad
|\f{\tilde{q}^\epsilon_{+,-}(\xi,\eta)}{i\Phi^\epsilon_{+,-}(\xi,\eta)}|\lesssim\f{\epsilon}{|\phi^\epsilon_{+,-}(\xi,\eta)|}\lesssim \epsilon.
\eeq
Integrating by parts w.r.t t, we have
\beno\begin{aligned}
&\int_0^t\int_{\mathbb{S}^\epsilon_{>}}\mathfrak{Q}^\epsilon_{+}(\xi,\eta)\varphi_{\leq-2}(\sqrt\epsilon|\eta|)d\eta d\xi dt\\
&=\underbrace{\int_{\mathbb{S}^\epsilon_{>}}\f{\tilde{q}^\epsilon_{+,-}(\xi,\eta)}{i\Phi^\epsilon_{+,-}(\xi,\eta)}
e^{it\Phi^\epsilon_{+,-}(\xi,\eta)}\widehat{f^+}(\tau,\xi-\eta)\cdot
\widehat{g^-}(\tau,\eta)\cdot\widehat{g^-}(\tau,-\xi)\varphi_{\leq-2}(\sqrt\epsilon|\eta|)d\eta d\xi}_{A_1^\epsilon}|_{\tau=0}^t\\
&\quad-\underbrace{\int_0^t\int_{\mathbb{S}^\epsilon_{>}}\f{\tilde{q}^\epsilon_{+,-}(\xi,\eta)}{i\Phi^\epsilon_{+,-}(\xi,\eta)}
e^{it\Phi^\epsilon_{+,-}(\xi,\eta)}\p_t\bigl(\widehat{f^+}(\xi-\eta)\cdot
\widehat{g^-}(\eta)\cdot\widehat{g^-}(-\xi)\bigr)\varphi_{\leq-2}(\sqrt\epsilon|\eta|)d\eta d\xi dt}_{A_2^\epsilon}
\end{aligned}\eeno

Similarly as the derivation of \eqref{estimate for I>0 in high fre} in Step 3.1 of proof to Proposition \ref{New Bsq prop}, using \eqref{bound of tilde q e} and \eqref{I 21}, we have
\beno\begin{aligned}
&|A_1^\epsilon|\lesssim\epsilon\int_{\mathbb{S}^\epsilon_{>}}|\widehat{f}(\xi-\eta)|\cdot
|\widehat{g}(\eta)|\cdot|\widehat{g}(-\xi)|d\eta d\xi\lesssim\epsilon\|\widehat{f}(\xi)\|_{L^1}\|\widehat{g}(\xi)\|_{L^2}^2\lesssim\epsilon\|f\|_{H^1}\|g\|_{L^2}^2,\\
& |A_2^\epsilon|\lesssim\epsilon t\sup_{(0,t)}\int_{\mathbb{S}^\epsilon_{>}}\bigl(|\p_t\widehat{f}(\xi-\eta)|\cdot
|\widehat{g}(\eta)|\cdot|\widehat{g}(-\xi)|+|\widehat{f}(\xi-\eta)|\cdot
|\p_t\bigl(\widehat{g^-}(\eta)\cdot\widehat{g^-}(-\xi)\bigr)|\bigr)\varphi_{\leq-2}(\sqrt\epsilon|\eta|)d\eta d\xi\\
&\lesssim\epsilon t\sup_{(0,t)}\bigl(\|\p_tf\|_{H^1}\|g\|_{L^2}^2+\f{1}{\sqrt\epsilon}\|f\|_{H^1}\|\f{1}{|\p_x|}\p_tg\|_{L^2}\|g\|_{L^2}\bigr),
\end{aligned}\eeno
where we used the fact that $|\xi|\sim|\eta|$ and the following inequality in the last inequality
\beno
\sqrt\epsilon|\eta|\varphi_{\leq-2}(\sqrt\epsilon|\eta|)\lesssim1.
\eeno
Thanks to  \eqref{initial energy}, \eqref{ansatz 2 for bsq e}, \eqref{equivalent energy functional for bsq e 2} and \eqref{estimate for p_t f e}, we obtain
\beq\label{estimate for II e in low fre}
|\int_0^t\int_{\mathbb{S}^\epsilon_{>}}\mathfrak{Q}^\epsilon_{+}(\xi,\eta)\varphi_{\leq-2}(\sqrt\epsilon|\eta|)d\eta d\xi dt|
\lesssim\epsilon+\epsilon^{\f32}t.
\eeq

{\it (2). For moderate frequencies with large modulation of phase}, i.e., for
\beno
\f14\leq\sqrt\epsilon|\eta|\leq 2^6\quad\text{and}\quad |\phi^\epsilon_{+,-}(\xi,\eta)|\geq 2^{-D-1},
\eeno
we have
\beno
|\f{\tilde{q}^\epsilon_{+,-}(\xi,\eta)}{i\Phi^\epsilon_{+,-}(\xi,\eta)}|\lesssim\f{\epsilon}{|\phi^\epsilon_{+,-}(\xi,\eta)|}\lesssim 2^D\epsilon.
\eeno
Following similar arguments as \eqref{estimate for II e in low fre}, integrating by parts with respect to  t, we get
\beq\label{estimate for II e in med fre with large modulation phase}
|\int_0^t\int_{\mathbb{S}^\epsilon_{>}}\mathfrak{Q}^\epsilon_{+}(\xi,\eta)
\varphi_{[-1,5]}(\sqrt\epsilon|\eta|)\varphi_{\geq-D}(\phi_{+,-}(\xi,\eta))d\eta d\xi dt|
\lesssim2^D\epsilon+2^D\epsilon^{\f32}t.
\eeq

{\it (3). For moderate frequencies with small modulation of phase}, i.e., for
\beno
\f14\leq\sqrt\epsilon|\eta|\leq 2^6\quad\text{and}\quad |\phi^\epsilon_{+,-}(\xi,\eta)|\leq 2^{-D},
\eeno
we divide the integral set into the following two parts
\beno
\underbrace{\{(\xi,\eta)\in \mathbb{S}^\epsilon\,|\, 0<\eta<\xi\leq\f{33}{32}\eta,\,\,\f14\leq\sqrt\epsilon\eta\leq 2^6\}}_{\mathbb{S}^\epsilon_{>,+}}\cup
\underbrace{\{(\xi,\eta)\in \mathbb{S}^\epsilon\,|\, 0>\eta>\xi\geq\f{33}{32}\eta,\,\,-\f14\geq\sqrt\epsilon\eta\geq -2^6\}}_{\mathbb{S}^\epsilon_{>,-}}.
\eeno
We only derive the estimate for the integral over the set $\mathbb{S}^\epsilon_{>,+}$. Now, introducing the coordinates transformation on $\mathbb{S}^\epsilon_{>,+}$ as follows:
\beno\begin{aligned}
\Psi_\epsilon:\,&\mathbb{S}^\epsilon_{>,+}\rightarrow\widetilde{\mathbb{S}^\epsilon}_{>,+}\subset\R^2,\\
&(\xi,\eta)\mapsto(\tilde\xi,\eta)=(\phi^\epsilon_{+,-}(\xi,\eta),\eta),
\end{aligned}\eeno
we have
\beq\label{det}
\det\,\Bigl(\f{\p\Psi_\epsilon(\xi,\eta)}{\p(\xi,\eta)}\Bigr)=\f{\p\phi^\epsilon_{+,-}(\xi,\eta)}{\p\xi}=\epsilon(12\xi-6\eta)
\sim\epsilon\eta\sim\sqrt\epsilon.
\eeq
Then $\Psi_\epsilon$ is invertible and we denote by
\beno
(\xi,\eta)=\Psi_\epsilon^{-1}(\tilde\xi,\eta).
\eeno

Changing the variables $(\xi,\eta)$ to $(\tilde\xi,\eta)$, using \eqref{bound of tilde q e} and \eqref{det}, we have
\beno\begin{aligned}
&|\int_0^t\int_{\mathbb{S}^\epsilon_{>,+}}\mathfrak{Q}^\epsilon_{+}(\xi,\eta)
\varphi_{[-1,5]}(\sqrt\epsilon|\eta|)\varphi_{\leq-D-1}(\phi_{+,-}(\xi,\eta))d\eta d\xi dt|\\
&\lesssim t\sup_{(0,t)}\int_{\f{1}{4\sqrt\epsilon}}^{\f{32}{\sqrt\epsilon}}\int_{-2^{-D}}^{2^{-D}}
\bigl(\sqrt\epsilon|\xi||\widehat{f}(\xi-\eta)|\cdot
|\widehat{g}(\eta)|\cdot|\widehat{g}(\xi)|1_{\mathbb{S}^\epsilon_{>,+}}\bigr)
|_{(\xi,\eta)=\Psi_\epsilon^{-1}(\tilde\xi,\eta)}d\tilde\xi d\eta\\
&\lesssim t2^{-\f{D}{2}}\sup_{(0,t)}\|g\|_{L^2}\Bigl(\int_{\f{1}{4\sqrt\epsilon}}^{\f{32}{\sqrt\epsilon}}\int_{-2^{-D}}^{2^{-D}}
\bigl(|\widehat{f}(\xi-\eta)|^2\cdot|\widehat{g}(\xi)|^21_{\mathbb{S}^\epsilon_{>,+}}\bigr)
|_{(\xi,\eta)=\Psi_\epsilon^{-1}(\tilde\xi,\eta)}d\tilde\xi d\eta\Bigr)^{\f12},
\end{aligned}\eeno
where we used the fact that $\sqrt\epsilon|\xi|\sim\sqrt\epsilon|\eta|\sim1$ in the last inequality. Then changing variables $(\tilde\xi,\eta)$
to $(\xi,\eta)$, using \eqref{det}, we have
\beno
|\int_0^t\int_{\mathbb{S}^\epsilon_{>,+}}\mathfrak{Q}^\epsilon_{+}(\xi,\eta)
\varphi_{[-1,5]}(\sqrt\epsilon|\eta|)\varphi_{\leq-D-1}(\phi_{+,-}(\xi,\eta))d\eta d\xi dt|
\lesssim 2^{-\f{D}{2}}\epsilon^{\f14}t\sup_{(0,t)}\|f\|_{L^2}\|g\|_{L^2}^2,
\eeno
which along with \eqref{initial energy}, \eqref{ansatz 2 for bsq e} and \eqref{equivalent energy functional for bsq e 2} implies
\beq\label{estimate for II e in med fre with small modulation phase}
|\int_0^t\int_{\mathbb{S}^\epsilon_{>,+}}\mathfrak{Q}^\epsilon_{+}(\xi,\eta)
\varphi_{[-1,5]}(\sqrt\epsilon|\eta|)\varphi_{\leq-D-1}(\phi_{+,-}(\xi,\eta))d\eta d\xi dt|
\lesssim2^{-\f{D}{2}}\epsilon^{\f14}t.
\eeq
The same estimate holds for the integral over set $\mathbb{S}^\epsilon_{>,-}$.

Taking $D=[\log_2\epsilon^{-\f56}]$ (i.e., $2^D\sim\epsilon^{-\f56}$) in \eqref{estimate for II e in med fre with large modulation phase}
and \eqref{estimate for II e in med fre with small modulation phase}, together with \eqref{estimate for II e in low fre}, we obtain that
\beq\label{estimate for II e>}
|\int_0^t\int_{\mathbb{S}^\epsilon_{>}}\mathfrak{Q}^\epsilon_{+}(\xi,\eta)d\eta d\xi dt|
\lesssim1+\epsilon^{\f23}t.
\eeq
The same estimates hold for $\int_0^t\int_{\R^2}\mathfrak{Q}^\epsilon_{+}(\xi,\eta)d\eta d\xi dt$ and $\int_0^t\int_{\R^2}\mathfrak{Q}^\epsilon_{-}(\xi,\eta)d\eta d\xi dt$. Then we  obtain
\beq\label{estimate for II e}
|\text{Re}(II)|
\lesssim1+\epsilon^{\f23}t.
\eeq

\medskip

{\bf Step 5. Final energy estimates.} Combining \eqref{priori for bsq e}, \eqref{estimate for I e} and \eqref{estimate for II e}, we finally obtain
\beno
\mathcal{E}_{N_0}(t)\lesssim 1+\epsilon^{\f23}t.
\eeno
This is exactly \eqref{priori estimate for bsq e}. This completes the proof of the proposition.
\end{proof}

\medskip

\section{Final comments}


1. It would be interesting to extend the results of the present paper to the two-dimensional version of \eqref{Bsq} or \eqref{Bsqeps}.

2. As for other Boussinesq systems except those described in Remark 1.1, the global well-posedness (or finite time blow-up) of \eqref{Bsqeps} is an open question.

\vspace{0.5cm}
\noindent {\bf Acknowledgments.}  The work
of the second author was partially supported by NSF of China under grants 11671383 and
by an innovation grant from National Center for Mathematics and Interdisciplinary Sciences.

\end{document}